%%%%%%%%%%%%%%%%%%%%%%%%%%%%%%%%%%%%%%%%%%%%%%%%%%%%%%%%%%%%%%%%%%%%%
%%           Topology Proceedings Sample Latex Article             %%
%%%%%%%%%%%%%%%%%%%%%%%%%%%%%%%%%%%%%%%%%%%%%%%%%%%%%%%%%%%%%%%%%%%%%

\documentclass{amsart}
\usepackage[T1]{fontenc}   %%%%%%%Please do not change
%\usepackage{graphics}                  %%%%%%%

%%%%%%%%%%%%%%%%%%%%%%%%%%%%%%%%%%%%%%%%%%%%%%
%%%%%%%%%%%%%%%%%%%%%%%%%%%%%%%%%%%%%%%%%%%%%%
% Please do not change this paragraph. %%%%%%%
\setcounter{page}{1}                   %%%%%%%
\setlength{\textwidth}{4.4in}          %%%%%%%
\setlength{\textheight}{7.0in}         %%%%%%%
\setlength{\evensidemargin}{1in}       %%%%%%%
\setlength{\oddsidemargin}{1in}        %%%%%%%
\setlength{\topmargin}{.8in}           %%%%%%%
%%%%%%%%%%%%%%%%%%%%%%%%%%%%%%%%%%%%%%%%%%%%%%
%%%%%%%%%%%%%%%%%%%%%%%%%%%%%%%%%%%%%%%%%%%%%%
% Please do not change the page size   %%%%%%%
% and do not redefine the baselineskip.%%%%%%%
%%%%%%%%%%%%%%%%%%%%%%%%%%%%%%%%%%%%%%%%%%%%%%
%%%%%%%%%%%%%%%%%%%%%%%%%%%%%%%%%%%%%%%%%%%%%%

\theoremstyle{definition}

\numberwithin{equation}{section}

%%%%%%%%%%
%MyMacro:

\usepackage{amssymb}
\usepackage{amsthm}
\usepackage{amsmath}
\usepackage[mathscr]{eucal}

\usepackage{enumitem}

\theoremstyle{plain}
\newtheorem{thm}{Theorem}[section]		
\newtheorem{prop}[thm]{Proposition}
\newtheorem{cor}[thm]{Corollary}
\newtheorem{lem}[thm]{Lemma}

\theoremstyle{definition}
\newtheorem{df}{Definition}[section]

\theoremstyle{remark}
\newtheorem{rmk}{Remark}[section]
\newtheorem*{ac}{Acknowledgements}

\newcommand{\zz}{\mathbb{Z}}
\newcommand{\qq}{\mathbb{Q}}
\newcommand{\rr}{\mathbb{R}}

\DeclareMathOperator{\card}{Card}

\DeclareMathOperator{\met}{Met}

%%%%%%%%%%%%%%%%%%%%%%%%%
%%%%%%%%%%%%%%%%%%%%%%%%%

\newcommand{\yosr}{\mathrm{SR}}

\newcommand{\yorrr}{\mathrm{R}}

\DeclareMathOperator{\yodiam}{diam}

\DeclareMathOperator{\metdis}{\mathcal{D}}

\newcommand{\yobset}{\mathbb{A}}

\newcommand{\yocset}{\mathbb{F}}

%%%%%%%%%%%%%%%%%%%%%%%%%%

\newcommand{\yoinpq}[2]{\Sigma_{\yoqqq}[ #1, #2 ]}
%%%%%%%%%%%%%%%%%%%%%%%%%%%%%

\newcommand{\yofastset}{\mathcal{Q}}

\newcommand{\yoyi}{\Omega}

\newcommand{\yocontinuum}{\mathfrak{c}}

\newcommand{\yosetf}{\mathcal{F}}

\newcommand{\yogeomqorigin}{\zeta}
\newcommand{\yogeomq}[1]{\yogeomqorigin_{(#1)}}
\newcommand{\yogeomqt}[2]{\yogeomqorigin_{(#1), #2}}

\newcommand{\yonfunc}{F}

\newcommand{\yoqqq}{Q}

\newcommand{\yobbb}[1]{H_{#1}}
\newcommand{\yoiii}[1]{I_{#1}}

\newcommand{\yominq}[1]{\mu(#1)}

\newcommand{\yoprism}[1]{\Psi_{#1}}

\newcommand{\yoxset}{\mathbb{X}}
\newcommand{\yokset}{K}

\newcommand{\yominus}{\ominus}

\newcommand{\yoipt}{\mathrm{LI}}

\newcommand{\yoyizz}{N(\yoyi)}

\newcommand{\yosc}{\mathrm{Suc}(\yocontinuum)}

\newcommand{\yoccset}{\mathbb{G}}

\newcommand{\yomark}{(\mathrm{M})}

\newcommand{\yotil}[1]{\widetilde{#1}}
%%%%%%%%%%%%%%%%%%%%%
\newcommand{\yoproj}[2]{\pi_{#1}(#2)}

\newcommand{\yoppset}{\mathrm{P}}

%%%%%%%%%%%%%%%%%%%%%%%
%%%%distance

\newcommand{\yoinduce}[3]{[#1]_{#2, #3}}
\newcommand{\yoinducet}[2]{[#1]_{#2}}

%%%%%%%%%%%%%%%%%%%%%%%%%
\newcommand{\yollbra}{[\![}
\newcommand{\yorrbra}{]\!]}
\newcommand{\yoinducett}[2]{\yollbra#1\yorrbra_{#2}}
%%%%%%%%%%%%%%%%%%%%%%%%

\newcommand{\yodim}{\mathrm{dim}}
\newcommand{\yolid}{\mathrm{Ind}}
\newcommand{\yosid}{\mathrm{ind}}

%%%%%%%%%%%%%%%%%%%%%%%%%%%

%\newcommand{\hogehoge}{$\clubsuit$HOGEHOGE$\clubsuit$}

%%%%%%%%%%%%%%%%%%%%%%%%%%%
\makeatletter
\@addtoreset{equation}{section}

\makeatother

\begin{document}

%%%%%%%%%%%%%%%%%%%%%%%%%%%%%%%%%%%%%%%%%%%%%%%%%%%%%%%%%%%%
%%%%%%%%%%%%%%%%%%%%%%%%%%%%%%%%%%%%%%%%%%%%%%%%%%%%%%%%%%%%
% This a placeholder for the TOPLOGY PROCEEDINGS logo %%%%%%
%\noindent                                             %%%%%%
%\begin{picture}(150,36)                               %%%%%%
%\put(5,20){\tiny{Submitted to}}                       %%%%%%
%\put(5,7){\textbf{Topology Proceedings}}              %%%%%%
%\put(0,0){\framebox(140,34){}}                        %%%%%%
%\put(2,2){\framebox(136,30){}}                        %%%%%%
%\end{picture}                                         %%%%%%
%%%%%%%%%%%%%%%%%%%%%%%%%%%%%%%%%%%%%%%%%%%%%%%%%%%%%%%%%%%%
%%%%%%%%%%%%%%%%%%%%%%%%%%%%%%%%%%%%%%%%%%%%%%%%%%%%%%%%%%%%
\vspace{0.5in}

\renewcommand{\bf}{\bfseries}
\renewcommand{\sc}{\scshape}
%insert defs/styles
\vspace{0.5in}

\title[Strongly rigid metrics]%
{Strongly
rigid metrics in 
spaces of metrics}

%    Information for first author:
\author{Yoshito Ishiki}
\address{\endgraf
Photonics Control Technology Team
\endgraf
RIKEN Center for Advanced Photonics
\endgraf
2-1 Hirasawa, Wako, Saitama 351-0198, Japan}
%    Current address (if needed):
%\curraddr{}
\email{yoshito.ishiki@riken.jp}
%\thanks{The first author was supported in part by NSF Grant \#000000.}

%    Information for second author (if needed):
%\author{Author Two}
%\address{}
%\email{}
%\thanks{Support information for the second author.}

%    General info
%%%%%%%%%%%%%%%%%%%%%%%%%%%%%%%%%%%%%%%%%%%%%%%%%%%
\subjclass[2020]{Primary 54E35, 
Secondary 54E52, 
51F99}
%
%         Please use the current 2020 Mathematics Subject Classification:
%         https://mathscinet.ams.org/mathscinet/msc/msc2020.html
%         https://zbmath.org/classification/
%%%%%%%%%%%%%%%%%%%%%%%%%%%%%%%%%%%%%%%%%%%%%%%%%%%
\keywords{Strong rigidity, 
Rigidity, 
Space of metrics, 
Comeagerness, 
Baire space}

%\thanks {ALL references are real and correct; ALL citations are imaginary.}

\begin{abstract}
A metric space is said to be 
strongly rigid if no positive distance 
is taken twice by the metric. 
In 1972,  
Janos 
proved that 
a separable metrizable space has a 
strongly rigid metric if and only if 
it is zero-dimensional. 
In this paper, 
we shall develop this result for 
the theory of  spaces of metrics. 
For a strongly zero-dimensional 
metrizable space, 
we prove  that 
the set of all strongly rigid 
metrics is dense in the space of metics. 
Moreover, if the space is the union of 
countably many  compact subspaces, 
then that set is comeager. 
As a consequence, 
we show that 
for a strongly zero-dimensional 
metrizable space, 
the set of all metrics
possessing no nontrivial (bijective) self-isometry 
is 
comeager in the space of 
metrics. 
\end{abstract}

\maketitle

\section{\bf Introduction}

\subsection{Background}
Let $X$ be a topological space, 
and  $S$  a subset of $[0, \infty)$
with $0\in S$. 
We denote by $\met(X; S)$
the set of all metrics on $X$ taking values in $S$ and  generating  the same topology 
of $X$. 
We also denote by $\metdis_{X}$ the supremum metric on $\met(X; S)$; 
 namely, 
$\metdis_{X}(d, e)=\sup_{x, y\in X}|d(x, y)-e(x, y)|$.
We often write $\met(X)=\met(X; [0, \infty))$. 
Observe  that 
$\metdis_{X}$ is a metric taking values in 
$[0, \infty]$. 
As is the case of ordinary metric spaces, 
we can introduce the topology on $\met(X)$  
generated  
by open balls. 
In what follows, we consider that 
$\met(X)$ is equipped with this topology. 
In \cite{Ishiki2020int, Ishiki2021ultra, ishiki2021dense, Ishiki2022comeager}, 
the author 
proved the denseness and determined 
the Borel hierarchy of 
a subset of $\met(X)$ which can be represented as 
$\{\, d\in \met(X)\mid \text{$d$ satisfies $P$}\, \}$
for some property $P$ on metric spaces under 
certain conditions. 
For example, in \cite{ishiki2021dense}, 
the author proved that 
the set of all doubling metrics in $\met(X)$ is 
dense and $F_{\sigma}$ for 
every compact
finite-dimensional metrizable space 
$X$.

A metric $d$ on a set  $X$ is 
said to be 
\emph{strongly rigid} if 
for all $x, y, u, v\in X$, 
the relations 
$d(x, y)=d(u, v)$ and $d(x, y)\neq 0$ imply 
$\{x, y\}=\{u,  v\}$.

In 1972, 
Janos \cite{MR288739} proved that
a separable metric space $X$ is 
strongly $0$-dimensional if and only if 
there exists a strongly rigid metric $d\in \met(X)$
(see also  
\cite{MR454938}). 
The existence of strongly rigid metrics 
affected  
 research on a characterization of  the dimension of 
metrizable spaces using values of metrics 
(see, for example, \cite{MR314012},  \cite{MR474229} and \cite{MR1000155}).

A topological space $X$ is said to be 
\emph{strongly $0$-dimensional} if 
for every pair $A, B$ of disjoint closed subsets of $X$, 
there exists a clopen subset $V$ of $X$ such that 
$A\subset V$ and $V\cap B=\emptyset$. 
Such a space is sometimes said 
 to be 
\emph{ultranormal}.

In this paper, 
we develop 
the result on the existence of strongly rigid metrics 
for  the theory of 
spaces of metrics. 
For a strongly 
$0$-dimensional 
metrizable space, 
we prove  that 
the set of all strongly rigid 
metrics is dense in the space of metics. 
Moreover, if the space is $\sigma$-compact,  
then that set is $G_{\delta}$. 
As a consequence, 
we show that 
for a strongly 
$0$-dimensional 
metrizable space, 
the set of all metrics
possessing no nontrivial (bijective) self-isometry 
is 
comeager in the space of 
metrics.

\subsection{Main results}

The symbol ``$\yocontinuum$'' stands for 
the cardinality of the continuum. 
For a set $S$, 
we denote by $\card(S)$ the cardinality of 
$S$. 
A subset of a topological space is said to be 
$G_{\delta}$ if it is the intersection of 
countably many  open subsets.

Let $X$ be a metrizable space. 
We denote by
$\yoipt(X)$
the set of all metrics $d$ such that 
if  $x, y, u, v\in X$ satisfies 
$x\neq y$, $u\neq v$, and 
$\{x, y\}\neq \{u, v\}$, 
then $d(x, y)$ and $d(u, v)$
are linearly independent 
over $\qq$. 
The following is our 
first result:

\begin{thm}\label{thm:main1}
Let $X$ be a strongly $0$-dimensional 
metrizable space
with $\card(X)\le \yocontinuum$. 
Let $\epsilon \in (0, \infty)$ 
and $d\in \met(X)$. 
Then there exists 
$e\in \yoipt(S)$
such that 
$\metdis_{X}(d, e)\le \epsilon$. 
Namely, 
the set $\yoipt(X)$ is 
dense in 
$(\met(X), \metdis_{X})$. 
Moreover, 
if $X$ is completely metrizable, 
we can choose $e$ as a complete metric. 
\end{thm}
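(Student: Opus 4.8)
The plan is to keep the metric $d$ fixed and to perturb it, always staying inside the $\epsilon$-ball for $\metdis_{X}$, by accumulating small \emph{clopen shifts} whose net effect forces the distances of distinct pairs to become $\qq$-linearly independent. Let $P$ denote the set of unordered pairs $\{x,y\}$ with $x\neq y$; since $\card(X)\le\yocontinuum$ we have $\card(P)\le\yocontinuum$, and $\rr$ contains a $\qq$-linearly independent set of size $\yocontinuum$, so there is exactly enough room to place one independent value at each pair. I would well-order $P$ as $\{p_{\alpha}\}_{\alpha<\kappa}$ with $\kappa\le\yocontinuum$ and build $e$ by transfinite recursion, at stage $\alpha$ fixing the value $e(p_{\alpha})$ to be $\qq$-linearly independent from all values $e(p_{\beta})$ with $\beta<\alpha$. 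Because $\yocontinuum$ is an initial ordinal, every $\alpha<\kappa$ has $\card(\alpha)<\yocontinuum$, so the $\qq$-span of the already-chosen values has cardinality $<\yocontinuum$; hence, as long as the set of admissible values for $e(p_{\alpha})$ is a nondegenerate interval (and so has cardinality $\yocontinuum$), an independent value can always be chosen.

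The engine keeping every partial object a genuine metric—and producing those nondegenerate admissible intervals—is strong $0$-dimensionality. The elementary fact I would use is that for a clopen $U\subset X$ and $t\ge 0$, adding to a metric the pullback, under the indicator $\mathbf 1_{U}$, of the discrete $\{0,1\}$-metric scaled by $t$ (i.e.\ adding $t$ to every distance with exactly one endpoint in $U$) again yields a metric: it is a sum of two metrics, so the triangle inequality survives, and since $U$ is clopen the identity map stays a homeomorphism, so the topology is unchanged. More generally, for a clopen partition $\{B_i\}_{i\in I}$ and any pseudometric $(s_{ij})$ on $I$, the function $e(x,y)=d(x,y)+s_{[x][y]}$ is a metric with $\metdis_{X}(d,e)=\sup_{i\neq j}s_{ij}$, still generating the topology because it agrees with $d$ on each block. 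These are the flexible, triangle-safe perturbations I will stack, organized by a fixed sequence of clopen partitions of $d$-mesh tending to $0$ (available in any strongly $0$-dimensional metrizable space): with level-$n$ shifts bounded by $\epsilon_n$ and $\sum_n\epsilon_n\le\epsilon$, any pair of small $d$-distance is affected only by deep levels, so its total shift is small; this simultaneously keeps $\metdis_{X}(d,e)\le\epsilon$, preserves the topology globally, and makes $e$ uniformly equivalent to $d$.

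At stage $\alpha$, to move the single value $e(p_{\alpha})$, where $p_{\alpha}=\{x,y\}$, without touching any committed pair, the aim is to select a clopen set $U$ with $x\in U$, $y\notin U$, at a level fine enough that no previously treated pair $p_{\beta}$ ($\beta<\alpha$) straddles $\partial U$, and then to apply a one-set shift: this changes $e(x,y)$ by $t$ while leaving every committed distance fixed, and as $t$ ranges over a small interval (within the remaining budget at the chosen level) it sweeps $e(p_{\alpha})$ through an interval of admissible values, from which a $\qq$-independent one is taken. Passing to the limit of the stagewise shifts then produces $e$, and the uniform equivalence from the previous paragraph is precisely what delivers the final clause: uniform equivalence transfers completeness, so if $d$ is complete the constructed $e$ is complete as well.

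The main obstacle is the tension, at each recursion step, between the rigidity of the triangle inequality and the need to move one distance freely: once the distances out of a non-isolated point are partly fixed, continuity pins the rest, so a naive pair-by-pair choice of raw distances is impossible. Strong $0$-dimensionality dissolves this in principle by replacing arbitrary single-distance perturbations with clopen shifts that are automatically metric- and topology-preserving; the genuine difficulty is to guarantee, at \emph{every} one of the $\le\yocontinuum$ stages, a clopen $U$ that separates $p_{\alpha}$ yet is not straddled by any of the $<\yocontinuum$ already-committed pairs—whose endpoints may accumulate at $x$. This is where the tree of clopen partitions of arbitrarily small mesh must be exploited carefully, and where the well-ordering of $P$ should be arranged to respect the partition levels, so that each stage still leaves an interval, rather than a single point, of admissible independent values and so that each fixed pair is disturbed at only countably many stages.
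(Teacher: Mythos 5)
There is a genuine gap, and it is already visible for three points. Your recursion requires, at stage $\alpha$ with $p_{\alpha}=\{x,y\}$, a clopen $U$ with $x\in U$, $y\notin U$ such that every previously committed pair has both endpoints on the same side of $U$; only then does the shift $d\mapsto d+t\,|\mathbf 1_{U}(\cdot)-\mathbf 1_{U}(\cdot)|$ move $e(x,y)$ ``while leaving every committed distance fixed.'' But for any third point $z$, every set $U$ separating $x$ from $y$ necessarily separates $z$ from exactly one of $x,y$. Hence once two sides of a triangle $\{x,y,z\}$ have been committed, \emph{no} clopen shift (indeed, no combination of them) can alter the third side without disturbing a committed one: in any well-ordering of the pairs, the last side of every triangle is frozen, and there is no interval of admissible values to choose from --- this is exactly the ``continuity pins the rest'' tension you name, and strong $0$-dimensionality does not dissolve it for this scheme. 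Your fallback, allowing each committed pair to be disturbed at countably many later stages, abandons the invariant that makes the transfinite choice meaningful: the final value of $e(p_{\beta})$ becomes an infinite sum of shifts entangled with other pairs, and nothing in the proposal shows that $\qq$-linear independence of the whole family survives in the limit. (A secondary issue: ``uniform equivalence transfers completeness'' is what you invoke for the last clause, but your construction only gives $d\le e\le d+(\text{small})$ pairwise; you would still need to check that $e$-Cauchy implies $d$-Cauchy, which is not automatic from $\metdis_{X}(d,e)\le\epsilon$ alone.)

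The paper avoids pair-by-pair recursion entirely. It fixes one clopen partition $\{O_{\alpha}\}$ of small $d$-mesh plus a skeleton $P=\{p_{\alpha}\}$, and manufactures \emph{in advance} a family of pairwise disjoint value sets whose union is linearly independent over $\qq$ (Sections 3--4: sums of powers $2^{-F_k(i)}$ over initial segments of $\qq_{\ge 0}$, organized by gauge systems on the Baire space $\yoyizz$). Each piece $O_{\alpha}$ receives a metric with values in its own set $\yoccset_{k,\alpha}$ via a topological embedding into $\yoyizz$, the skeleton receives a strongly rigid metric with values in a further independent set $\yoxset_{k}$, and the amalgamation formula $e(x,y)=e_{\alpha}(x,p_{\alpha})+h(p_{\alpha},p_{\beta})+e_{\beta}(p_{\beta},y)$ makes every cross-piece distance a sum of three terms from three mutually independent sets; Lemma \ref{lem:xyzabc} is precisely the statement that such sums are pairwise independent over $\qq$. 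Triangle inequalities are never negotiated distance-by-distance --- they hold because each local metric is built from Proposition \ref{prop:triangular}/Lemma \ref{lem:stst} and the gluing is Proposition \ref{prop:amal}. Some global device of this kind, producing all values from a single pre-certified independent set rather than choosing them one at a time, appears to be unavoidable, and your proposal does not contain a substitute for it.
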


We denote by 
$\yosr(X)$
the set of all strongly rigid metrics in  $\met(X)$. 
As a consequence of Theorem \ref{thm:main1}, 
we obtain our second result:
\begin{thm}\label{thm:main15}
Let $X$ be a strongly $0$-dimensional 
metrizable space
with $\card(X)\le \yocontinuum$. 
Then the set $\yosr(X)$ is 
dense in $\met(X)$. 
Moreover, 
if $X$ is $\sigma$-compact, 
then $\yosr(X)$ is 
dense $G_{\delta}$ in $(\met(X), \metdis_{X})$.
\end{thm}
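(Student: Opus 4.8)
The plan is to deduce Theorem~\ref{thm:main15} from Theorem~\ref{thm:main1} together with a separate $G_\delta$ argument for the $\sigma$-compact case. The first observation is that $\yoipt(X) \subset \yosr(X)$: if $d \in \yoipt(X)$ and $x,y,u,v \in X$ satisfy $d(x,y)=d(u,v) \neq 0$ with $\{x,y\}\neq\{u,v\}$, then (after noting we may assume $x\neq y$ and $u\neq v$) the equality $d(x,y)-d(u,v)=0$ is a nontrivial $\qq$-linear relation between two positive reals, contradicting their linear independence over $\qq$. Hence $d$ is strongly rigid. Since Theorem~\ref{thm:main1} gives that $\yoipt(X)$ is dense in $(\met(X),\metdis_X)$ for every strongly $0$-dimensional metrizable $X$ with $\card(X)\le\yocontinuum$, the inclusion immediately yields that $\yosr(X)$ is dense.

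For the second assertion, I would assume $X$ is $\sigma$-compact, say $X=\bigcup_{n\in\nn} K_n$ with each $K_n$ compact, and show $\yosr(X)$ is $G_\delta$ in $(\met(X),\metdis_X)$; density then follows from the first part. The strategy is to write the strong rigidity condition as a countable intersection of open sets. The natural approach is to express the complement, or rather the failure of strong rigidity, in terms of the quantities $\card(X)\le\yocontinuum$ and the compact pieces. For a compact metric space the set of pairs $\{x,y\}$ with $d(x,y)\ge \delta$ behaves well under the Hausdorff-type topology, so for each pair of indices $m,n$ and each rational tolerance one can describe the set of metrics for which two distinct point-pairs drawn from $K_m\times K_m$ and $K_n\times K_n$ realize (nearly) the same positive distance. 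The plan is to prove that for each $k\in\nn$ the set
\[
U_k=\Bigl\{\, d\in\met(X)\ \Big|\ \text{no two distinct pairs with distance}\ \ge 1/k\ \text{collide}\,\Bigr\}
\]
is open, and that $\yosr(X)=\bigcap_{k\in\nn} U_k$.

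The inclusion $\yosr(X)=\bigcap_k U_k$ is essentially a reformulation: a metric is strongly rigid exactly when for every positive threshold no coincidence of distinct pairs occurs at that threshold, and since every positive distance exceeds $1/k$ for some $k$, the countable intersection captures strong rigidity. The genuine work is showing each $U_k$ is open in the supremum metric. Here $\sigma$-compactness is essential: on a fixed compact product $K_m\times K_m$, the distance function $d$ attains its values, and small $\metdis_X$-perturbations move all distances uniformly by a controlled amount, so the separation that witnesses membership in $U_k$ (a strictly positive gap between the collection of distinct pair-distances above $1/k$) is preserved under perturbations smaller than, say, half that gap. The main obstacle I anticipate is the bookkeeping needed to make ``no collision above $1/k$'' genuinely open when there are infinitely many compact pieces: one must argue that a uniform positive gap exists and survives perturbation despite quantifying over the countable family $\{K_n\}$, which requires care since a supremum over infinitely many compacta of the relevant minimal gaps could in principle be zero. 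The remedy is to stratify further, incorporating the index bound into the threshold so that for each $k$ one only inspects finitely many pairs $K_m,K_n$ with $m,n\le k$, leaving the genuinely infinite part to the intersection over $k$; verifying that this refined family is still open and still intersects to $\yosr(X)$ is the technical crux.
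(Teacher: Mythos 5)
Your first half is exactly the paper's argument: $\yoipt(X)\subset\yosr(X)$ because linear independence of $d(x,y)$ and $d(u,v)$ over $\qq$ in particular forces $d(x,y)\neq d(u,v)$, and Theorem \ref{thm:main1} then gives density of $\yosr(X)$. That part is correct.

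The $G_{\delta}$ half has a genuine gap, and you have misdiagnosed where the difficulty lies. The obstacle is not the infinitude of the family $\{K_n\}$ (that is handled simply by intersecting over $n$ as well as over the threshold); it is that on a \emph{single} infinite compact piece the ``strictly positive gap between the collection of distinct pair-distances above $1/k$'' on which your openness argument rests need not exist. Two pairs $\{x,y\}\neq\{u,v\}$ can be arbitrarily close to one another \emph{as pairs} --- take $\{x,y_j\}$ and $\{x,y\}$ with $y_j\to y$ --- and then $|d(x,y_j)-d(x,y)|\to 0$ even when $d$ is strongly rigid, so the infimum of $|d(x,y)-d(u,v)|$ over distinct pairs with distances $\ge 1/k$ is typically $0$ and no perturbation radius of ``half the gap'' is available. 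The same failure appears if you instead try to prove openness of $U_k$ by compactness: given $e_i\to d$ with colliding distinct pairs $\{x_i,y_i\}\neq\{u_i,v_i\}$, the limit pairs may coincide, yielding no contradiction with $d\in U_k$. The repair, which is what the paper does in Proposition \ref{prop:srgdelta}, is to stratify not only by the size of the common distance but also by the separation \emph{between the two pairs}: for each $n,m$ one takes the set $L_{n,m}$ of metrics admitting $x,y,u,v\in K_n$ with $d(x,y)=d(u,v)\ge 2^{-m}$, $d(x,u)+d(y,v)\ge 2^{-m}$, and $d(x,v)+d(u,y)\ge 2^{-m}$. These last two closed conditions pass to the limit and force the limit pairs to remain distinct, so each $L_{n,m}$ is closed by the compactness of $K_n$; and $\yosr(X)=\bigcap_{n,m}\bigl(\met(X)\setminus L_{n,m}\bigr)$ because any genuine violation of strong rigidity has both of these separations strictly positive, hence $\ge 2^{-m}$ for some $m$. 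Without this extra quantification your sets $U_k$ are not shown to be open (and in general will not be).
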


\begin{rmk}
Note that 
Theorem \ref{thm:main15} is true 
even if $X$ is not locally compact. 
For example, Theorem \ref{thm:main15} is 
ture for $X=\qq$. 
\end{rmk}

\begin{rmk}
Theorem \ref{thm:main15} 
can be considered as 
an analogue of Rouyer's result that 
generic metric spaces in the Gromov--Hausdorff space are strongly rigid
\cite[Theorem 2]{MR2831899}. 
Rouyer uses the term ``totally anisometric'' instead of 
``strongly rigid''. 
\end{rmk}

We say that a metric $d$ on a set $X$ is
said to be \emph{rigid} if 
every bijective isometry 
$f\colon (X, d)\to (X, d)$ must be 
the identity map. 
We denote by $\yorrr(X)$
the set of all rigid metrics in $\met(X)$.

Let $X$ be a topological space. 
A subset $S$ of $X$  is said  to be 
\emph{comeager} if $S$ contains 
a dense $G_{\delta}$ subset of $X$. 
As an application of Theorem \ref{thm:main1}, 
we obtain our third result: 
\begin{thm}\label{thm:main2}
Let $X$ be a strongly $0$-dimensional 
metrizable space. 
If $X$ is $\sigma$-compact and satisfies 
$3\le \card(X)\le \yocontinuum$, 
then 
the set $\yorrr(X)$ is comeager in 
$(\met(X), \metdis_{X})$. 
\end{thm}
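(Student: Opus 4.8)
The plan is to reduce the comeagerness of $\yorrr(X)$ to the denseness and $G_{\delta}$ property of $\yosr(X)$ that is already established in Theorem~\ref{thm:main15}. The bridge is the elementary set-theoretic inclusion
\[
\yosr(X)\subseteq \yorrr(X),
\]
which holds precisely because $\card(X)\ge 3$. Indeed, by the definition given above, comeagerness means containing a dense $G_{\delta}$ set; since $X$ is $\sigma$-compact, strongly $0$-dimensional, and satisfies $\card(X)\le \yocontinuum$, Theorem~\ref{thm:main15} guarantees that $\yosr(X)$ is itself a dense $G_{\delta}$ subset of $(\met(X), \metdis_{X})$. Thus, once the inclusion above is proven, $\yorrr(X)$ contains a dense $G_{\delta}$ set and is therefore comeager, which completes the proof.

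So the real content is to verify that every strongly rigid metric is rigid when $X$ has at least three points. First I would fix $d\in \yosr(X)$ and an arbitrary bijective isometry $f\colon (X, d)\to (X, d)$, and argue by contradiction that $f$ must be the identity. Suppose $f(a)\neq a$ for some $a\in X$, and set $b:=f(a)$, so that $a\neq b$ and $d(a, b)\neq 0$. Applying the isometry relation $d(f(a), f(b))=d(a, b)$ together with the strong rigidity of $d$ to the nonzero value $d(a, b)$ yields $\{f(a), f(b)\}=\{a, b\}$; since $f(a)=b$, this forces $f(b)=a$, so $f$ exchanges $a$ and $b$.

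The role of the hypothesis $\card(X)\ge 3$ appears next: choose a third point $c\in X\setminus \{a, b\}$. Applying the same reasoning to the pair $\{a, c\}$, where $d(a, c)\neq 0$, gives $\{f(a), f(c)\}=\{a, c\}$, that is, $\{b, f(c)\}=\{a, c\}$. But $b\notin \{a, c\}$, a contradiction. Hence $f=\mathrm{id}$ and $d\in \yorrr(X)$, which establishes the inclusion.

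I do not expect a serious obstacle here, since the analytic difficulty, namely the denseness together with the $G_{\delta}$ structure, is entirely absorbed by Theorem~\ref{thm:main15}. The only point requiring care is the combinatorial step above, and in particular the observation that the hypothesis $\card(X)\ge 3$ is genuinely necessary: for a two-point space every metric is vacuously strongly rigid yet admits the nontrivial transposition as a self-isometry, so no metric is rigid and the conclusion would fail. It is worth recording this explicitly in order to justify the lower bound on $\card(X)$ in the statement.
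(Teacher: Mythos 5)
Your proposal is correct and follows essentially the same route as the paper: reduce to the inclusion $\yosr(X)\subseteq \yorrr(X)$ and invoke Theorem \ref{thm:main15} for the dense $G_{\delta}$ structure, with the inclusion itself verified by applying strong rigidity to two pairs built from a third point guaranteed by $\card(X)\ge 3$. The paper argues directly that $f(x)\in\{x,y\}\cap\{x,z\}=\{x\}$ while you argue by contradiction via the transposition of $a$ and $b$, but this is only a cosmetic difference.
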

\begin{rmk}
Our Theorem \ref{thm:main2} can be considered as 
a $0$-dimensional 
 analogue of the result that 
the set of all  rigid Riemannian (or pseudo-Riemannian)
metrics 
 is 
open dense in the space of Riemannian metrics with respect to the Whitney $C^{\infty}$-topology
(see \cite{MR0267604} and \cite{MR3320922}). 
\end{rmk}

As a consequence of Theorem \ref{thm:main15}, 
we know the existence of strange  metrics on 
second-countable  locally compact Hausdorff spaces. 
\begin{thm}\label{thm:plus1}
Let $X$ be a
strongly $0$-dimensional  
second-countable  locally compact Hausdorff space. 
Then there exists $d\in \met(X)$ such that 
for every $\xi\in X$, 
the map $F_{\xi}\colon X\to [0, \infty)$ defined by 
$F_{\xi}(x)=d(x, \xi)$ is a topological embedding. 
\end{thm}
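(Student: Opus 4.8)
The plan is to produce a single metric $d\in\met(X)$ that is simultaneously \emph{strongly rigid} and \emph{proper} (i.e. has the Heine--Borel property that every closed ball is compact), and then to read off the embedding property from these two features. First I would record that, although the hypotheses only mention a Hausdorff space, the assertion $\met(X)\neq\emptyset$ forces $X$ to be metrizable; being $\sigma$-compact and metrizable, $X$ is separable, so $\card(X)\le\yocontinuum$, and $X$ is strongly $0$-dimensional. Hence Theorem~\ref{thm:main15} applies and tells us that $\yosr(X)$ is dense in $(\met(X),\metdis_{X})$.

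Next I would bring properness into play. Since $X$ is locally compact, $\sigma$-compact and metrizable, it carries a proper metric (a classical fact: a metrizable space admits a metric with the Heine--Borel property exactly when it is locally compact and $\sigma$-compact); thus the set $U$ of proper metrics in $\met(X)$ is nonempty. The key observation is that $U$ is \emph{open} in $(\met(X),\metdis_{X})$: if $d\in U$ and $\metdis_{X}(d,e)\le\epsilon<\infty$, then every closed $e$-ball is contained in a closed $d$-ball of radius larger by $\epsilon$, hence is a closed subset of a compact set and therefore compact, so $e\in U$. Because $\yosr(X)$ is dense and $U$ is a nonempty open set, their intersection is nonempty; I would fix any $d\in\yosr(X)\cap U$, so that $d$ is a proper, strongly rigid metric on $X$.

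It then remains to check that for this $d$ every $F_{\xi}$ is a topological embedding. Continuity is automatic, since $F_{\xi}$ is $1$-Lipschitz. Injectivity is exactly strong rigidity: if $d(x,\xi)=d(y,\xi)>0$ then $\{x,\xi\}=\{y,\xi\}$ forces $x=y$, while $d(x,\xi)=0$ forces $x=\xi$. For the homeomorphism onto the image I would verify that $F_{\xi}$ is a closed map. Given a closed set $A\subseteq X$ and a sequence with $F_{\xi}(a_{n})\to t$ in $[0,\infty)$, the values $F_{\xi}(a_{n})$ are bounded by some $R$, so the $a_{n}$ lie in the compact ball $\{\,x\in X: d(x,\xi)\le R\,\}$; passing to a convergent subsequence $a_{n_{k}}\to a\in A$ gives $F_{\xi}(a)=t$, so $F_{\xi}(A)$ is closed. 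A continuous injective closed map is a homeomorphism onto its image, which is the desired conclusion.

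The only real obstacle is the simultaneous control of strong rigidity and compactness of balls: strong rigidity is available only as a dense (Theorem~\ref{thm:main15}) rather than everywhere-satisfied condition, so it cannot simply be imposed on a prescribed proper metric. This is precisely resolved by noting that properness is an \emph{open} condition, so that a dense set of strongly rigid metrics must meet the open set of proper metrics; the embedding argument itself is then routine, relying only on properness to upgrade the continuous injection $F_{\xi}$ to a closed map.
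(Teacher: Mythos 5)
Your proposal is correct and follows essentially the same route as the paper: show the set of proper metrics is nonempty and open in $(\met(X),\metdis_{X})$, intersect it with the dense set $\yosr(X)$ from Theorem \ref{thm:main15}, and then use strong rigidity for injectivity of $F_{\xi}$ and properness (hence closedness) of $F_{\xi}$ to get a topological embedding. The only cosmetic difference is that you verify closedness of $F_{\xi}$ directly by a compactness/subsequence argument, whereas the paper cites the general fact that a proper map into a metrizable space is closed.
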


\begin{rmk}
If we omit the assumption that 
$X$ is strongly $0$-dimensional,
then our main results  may not hold. 
For example, 
focusing on 
Theorem 
\ref{thm:main15}, 
we obtain
$\yosr(\rr)=\emptyset$, 
where, 
of course, 
the space $\rr$ is equipped with 
the Euclidean topology. 
In other words,  for every $d\in \met(\rr)$, 
there exist $x, y, u, v \in \rr$ with 
$x\neq y$ and $u\neq v$
 such that 
$d(x, y)=d(u, v)$
and $\{x, y\}\neq \{u, v\}$. 
This is a corollary of 
 the intermediate value theorem. 
 Furthermore, 
 it is known that 
if a metrizable space $X$ satisfies $\yosr(X)\neq \emptyset$, 
 then $X$ has 
  small inductive dimension $0$, 
 which means that $X$ has an open base
consisting of 
clopen subsets
 (see \cite[Lemma 2.1]{MR288739}). 
\end{rmk}

\begin{rmk}
There are three classical topological dimensions, 
the small inductive dimension $\yosid(X)$, 
the large inductive dimension 
$\yolid(X)$, 
and 
the covering dimension $\yodim(X)$
(see \cite{MR0394604}).
Note that  every metrizable  space $X$ satisfies 
$\yolid(X)=\yodim(X)$
and 
 a metrizable  space  $X$ is strongly $0$-dimensional if and only if 
$\yolid(X)=\yodim(X)=0$. 
Our definition of the strong $0$-dimensionality is 
nothing but $\yolid(X)=0$. 
For every metrizable space $X$, 
we have $\yosid(X)\le \yolid(X)$
and if $X$ is separable, 
then $\yosid(X)=\yolid(X)$. 
However, 
in general, for a non-separable 
metrizable space $X$, 
the small inductive dimension
$\yosid(X)$ is not equal to 
$\yolid(X)$. 
In particular, 
the condition $\yosid(X)=0$ 
does not  necessarily imply 
$\yolid(X)=0$. 
There is   a famous 
(completely)
metrizable space $\Delta$
constructed  in  
\cite{MR142102} and \cite{MR227960} 
(see also \cite{MR0394604})
called Roy's example, 
which 
satisfies 
$\yosid(\Delta)=0$ and 
$\yolid(\Delta)=\yodim(\Delta)=1$. 
With regard  to the 
difference between $\yosid(X)$ and 
$\yolid(X)$, 
in \cite{MR474229}, 
Janos and Matin said that 
it appeared to be  difficult to show that 
every metrizable space $X$ with 
 $\yosid(X)=0$ and $\card(X)\le \yocontinuum$
 admits 
 a strongly rigid metric, i.e., $\yosr(X)\neq \emptyset$.
 This question seems to be  still open as of now. 
Note that 
this statement is true if we replace the assumption $\yosid(X)=0$  with 
$\yolid(X)=0$ 
(see \cite{MR454938} and Corollary \ref{cor:cardsr} in the present  paper)
and 
the condition $\yosr(X)\neq\emptyset$ implies 
$\yosid(X)=0$
 (see \cite[Lemma 2.1]{MR288739}). 
The author does not know whether Roy's metrizable space $\Delta$
(or any space $X$ with $\yosid(X)=0$)
admits a strongly rigid metic, 
nor 
does the author 
 know 
whether we can replace the assumption that $X$ is strongly 
$0$-dimensional with the condition that 
$\yosid(X)=0$ in the statements of our main results. 
\end{rmk}

The organization of this paper is as follows: 
In Section 
\ref{sec:discrete}, 
we introduce a ubiquitously dense subset of 
a metrizable space, and 
we construct a strongly rigid discrete metric 
taking values in a given ubiquitously dense subset 
of $[0, \infty)$. 
In Section \ref{sec:qq}, 
we give a system of 
linearly independent 
real numbers over $\qq$
using a bijection between 
$\zz_{\ge 0}$ and $\qq_{\ge 0}$. 
The argument in this section 
is 
devoted  to showing 
Lemma \ref{lem:xyzabc}, 
which is a important part of the proof of 
Theorem \ref{thm:main1}. 
In Section \ref{sec:rigid}, 
we first construct a strongly rigid metric on 
the countable power $\yoyizz$ of the 
 discrete space $\yoyi$ with 
  $\card(\yoyi)=\yocontinuum$. 
Since 
 every strongly $0$-dimensional 
metrizable space $X$ with $\card(X)\le \yocontinuum$ can be topologically embedded
into $\yoyizz$, 
we can obtain a
strongly rigid metric on $X$. 
In Section 
\ref{sec:proof}, 
we prove Theorems \ref{thm:main1}, 
\ref{thm:main15}, \ref{thm:main2}, 
and \ref{thm:plus1}.

\section{\bf A construction of strongly rigid discrete  metrics}\label{sec:discrete}
In this paper, 
we say that a metric is \emph{discrete}
if it generates the discrete topology. 
We first give  a 
construction of strongly rigid 
metrics on discrete spaces. 
We begin with the following basic 
proposition on the triangle inequality. 
\begin{prop}\label{prop:triangular}
Let $N_{1}, N_{2}, N_{3}\in \zz_{\ge 1}$. 
We assume that  $M_{1}, M_{2}, M_{3}\in (0, \infty)$ satisfy 
$M_{i}\in (N_{i}+2^{-N_{i}-1}, N_{i}+2^{-N_{i}})$ for all 
 $i\in \{1, 2, 3\}$. 
If $N_{1}\le N_{2}+N_{3}$, 
then we have $M_{1}< M_{2}+M_{3}$. 
\end{prop}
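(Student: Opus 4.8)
The plan is to reduce the strict inequality $M_1 < M_2 + M_3$ to a clean arithmetic inequality between the endpoints of the prescribed intervals. Since $M_1$ lies in $(N_1 + 2^{-N_1-1}, N_1 + 2^{-N_1})$, I would first use only the upper endpoint to get $M_1 < N_1 + 2^{-N_1}$. Dually, using only the lower endpoints for $M_2$ and $M_3$, I obtain $M_2 + M_3 > N_2 + N_3 + 2^{-N_2-1} + 2^{-N_3-1}$. Consequently it suffices to prove the purely numerical inequality
\[
N_1 + 2^{-N_1} \le N_2 + N_3 + 2^{-N_2-1} + 2^{-N_3-1},
\]
since chaining the three estimates then yields $M_1 < N_1 + 2^{-N_1} \le N_2 + N_3 + 2^{-N_2-1} + 2^{-N_3-1} < M_2 + M_3$, as desired. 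Note that an inequality with ``$\le$'' in the middle step is enough because the two flanking estimates are strict.

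To establish this numerical inequality, I would split according to the hypothesis $N_1 \le N_2 + N_3$. If the inequality is strict, that is $N_1 \le N_2 + N_3 - 1$, then because $N_1 \ge 1$ forces $2^{-N_1} \le 1/2 < 1$, the left-hand side satisfies $N_1 + 2^{-N_1} < N_2 + N_3$, while the right-hand side visibly exceeds $N_2 + N_3$ as both correction terms are positive; the target inequality then holds with room to spare.

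The delicate case is the boundary $N_1 = N_2 + N_3$, where the integer parts cancel exactly and leave no slack. Here the target inequality collapses to $2^{-(N_2+N_3)} \le 2^{-N_2-1} + 2^{-N_3-1}$, and multiplying through by $2^{N_2+N_3}$ converts it into $1 \le 2^{N_2-1} + 2^{N_3-1}$, which holds because $N_2, N_3 \in \zz_{\ge 1}$ makes each summand at least $1$. I expect this boundary case to be the main (though modest) obstacle, as it is the only place where the precise position $2^{-N_i-1}$ of the lower endpoint of each interval, together with the standing assumption $N_i \ge 1$, is genuinely needed; the strict case is otherwise slack enough to be immediate.
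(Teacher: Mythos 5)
Your proposal is correct and follows essentially the same route as the paper: bound $M_1$ by the upper endpoint, bound $M_2+M_3$ by the lower endpoints, and split into the slack case $N_1<N_2+N_3$ and the boundary case $N_1=N_2+N_3$. Your treatment of the boundary case by clearing denominators is a minor cosmetic variation on the paper's chain of inequalities (which instead assumes $N_2\le N_3$ without loss of generality), but the substance is identical.
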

\begin{proof}
We may assume that $N_{2}\le N_{3}$. 
If $N_{1}<N_{2}+N_{3}$, 
then we have 
$M_{1}< N_{1}+1\le N_{2}+N_{3}<
M_{2}+M_{3}$. 

In the case of $N_{1}=N_{2}+N_{3}$, 
 we have 
\begin{align*}
M_{1}&<N_{1}+2^{-N_{1}}
=N_{1}+2^{-(N_{2}+N_{3})}
\le N_{1}+2^{-N_{3}}\\
&=
N_{1}+2^{-N_{3}-1}+2^{-N_{3}-1}
\le 
 N_{2}+N_{3}+2^{-N_{2}-1}+2^{-N_{3}-1}
\\
&=(N_{2}+2^{-N_{2}-1})+(N_{3}+2^{-N_{3}-1})
<M_{2}+M_{3}. 
\end{align*}
Thus, 
we conclude that 
$M_{1}<M_{2}+M_{3}$. 
\end{proof}
\begin{rmk}
A crucial point of Proposition \ref{prop:triangular} is 
that we can choose $M_{i}$ depending  only on $N_{i}$. 
\end{rmk}

\begin{df}\label{df:densedf}
Let $X$ be a metrizable space. 
We say that  a subset $S$ of $X$ is 
\emph{ubiquitously dense} if 
for every non-empty open subset $U$ of $X$, 
we have 
$\card(U\cap S)=\card(S)$. 
\end{df}
Note that if there exists a 
ubiquitously dense subset $S$ of a metrizable space $X$
with $1<\card(X)$, 
then $S$ should be infinite and  the space $X$ has no isolated points.

In this paper, 
we use the set-theoretic representation of 
cardinal. 
For example, 
the relation $\alpha<\yocontinuum$ means 
$\alpha\neq \yocontinuum$ and 
$\alpha\in \yocontinuum$, 
and we have $\yocontinuum=\{\, \alpha\mid \alpha<\yocontinuum\, \}$.
For more discussion, 
we refer the readers to 
\cite{MR1940513}.

As a related work to the next theorem, 
we refer the readers to \cite{MR4022760}
concerning a partition of  $\rr$ into an 
uncountable family of dense subsets. 
\begin{thm}\label{thm:densedecomposition}
Let $X$ be a separable metrizable space
with $1<\card(X)$
and 
 $S$  a ubiquitously dense subset of 
$X$. 
Put $\kappa =\card(S)$. 
Then there exists a 
family  $\{A(\alpha)\}_{\alpha<\kappa}$ of 
subsets of $X$
such that 
\begin{enumerate}
\item each $A(\alpha)$ is countable;  
\item each $A(\alpha)$ is dense in $X$; 
\item if $\alpha, \beta\in \kappa$ satisfy 
 $\alpha\neq\beta$, 
then $A(\alpha)\cap A(\beta)=\emptyset$; 
\item $S=\bigcup_{\alpha<\kappa}A(\alpha)$. 
\end{enumerate}
\end{thm}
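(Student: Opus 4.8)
The plan is to partition $S$ by first laying down, inside $S$, a scaffold of $\kappa$ pairwise disjoint countable dense sets, and then distributing the remaining points of $S$ among them without destroying countability.

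First I would record the two facts that drive everything. Since $X$ is separable and metrizable it is second countable, so I fix a countable base $\{U_n\}_{n\in\nn}$ consisting of non-empty open sets; a subset of $X$ is then dense if and only if it meets every $U_n$. Next, since $S$ is ubiquitously dense and $1<\card(X)$, the remark preceding the theorem shows that $\kappa=\card(S)$ is infinite, and ubiquitous density gives $\card(U_n\cap S)=\kappa$ for every $n$.

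The crux is to construct an \emph{injective} family $(p_{\alpha,n})_{(\alpha,n)\in\kappa\times\nn}$ of points of $S$ with $p_{\alpha,n}\in U_n\cap S$ for all $\alpha,n$. Since $\card(\kappa\times\nn)=\kappa\cdot\aleph_0=\kappa$, I would enumerate $\kappa\times\nn$ with order type $\kappa$ as $\{(\alpha_\xi,n_\xi)\}_{\xi<\kappa}$ and choose the points by transfinite recursion: at stage $\xi$, having chosen $p_{\alpha_\eta,n_\eta}$ for all $\eta<\xi$, I pick $p_{\alpha_\xi,n_\xi}$ in $U_{n_\xi}\cap S$ distinct from every earlier choice. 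This is possible precisely because $\card(U_{n_\xi}\cap S)=\kappa$ while the set of earlier choices has cardinality $\card(\xi)<\kappa$. This single step is where ubiquitous density is essential, and I expect it to be the main obstacle; everything else is cardinal bookkeeping. For each fixed $\alpha$ the set $P(\alpha)=\{\,p_{\alpha,n}\mid n\in\nn\,\}$ is then countable and meets every $U_n$, hence is dense, and the $P(\alpha)$ are pairwise disjoint by injectivity.

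It remains to absorb the leftover set $T=S\setminus B$, where $B=\bigcup_{\alpha<\kappa}P(\alpha)$. Since $\card(T)\le\card(S)=\kappa=\card(\kappa\times\nn)$, I would fix an injection $T\hookrightarrow\kappa\times\nn$ and compose it with the projection onto $\kappa$ to obtain a map $h\colon T\to\kappa$ whose fibers $C_\alpha=h^{-1}(\alpha)$ are pairwise disjoint, have union $T$, and are each countable (each injects into $\{\alpha\}\times\nn$). Setting $A(\alpha)=P(\alpha)\cup C_\alpha$ then finishes the argument: each $A(\alpha)$ is countable as a union of two countable sets, it is dense because it contains the dense set $P(\alpha)$, and the family is pairwise disjoint because the $P(\alpha)$ are disjoint, the $C_\alpha$ are disjoint, and $P(\alpha)\subseteq B$ while $C_\alpha\subseteq S\setminus B$; finally $\bigcup_{\alpha<\kappa}A(\alpha)=B\cup T=S$. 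Thus the essential content is the recursive construction of the scaffold $(p_{\alpha,n})$, after which conditions (1)--(4) follow routinely.
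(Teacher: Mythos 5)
Your proposal is correct and follows essentially the same route as the paper: a transfinite recursion over $\kappa\times\zz_{\ge 0}$ re-enumerated with order type $\kappa$, choosing at each stage a fresh point of $U_n\cap S$ (possible since the earlier choices number fewer than $\kappa=\card(U_n\cap S)$), followed by absorbing the leftover points of $S$ into the resulting disjoint countable dense sets. The only cosmetic difference is in the absorption step, where the paper adds at most one leftover point to each $A(\alpha)$ while you add a countable fiber; both are routine.
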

\begin{proof}
Notice that $\aleph_{0}\le \kappa$. 
By $\kappa\times \aleph_{0}=\kappa$, 
there exists a (strictly)
linear order $\prec$ such that 
$(\kappa\times \zz_{\ge 0}, \prec)$ and 
$(\kappa, <)$ are isomorphic to each other 
as ordered sets. 
Since $X$ is separable, 
there exists a countable  open base 
$\{U_{i}\}_{i\in \zz_{\ge 0}}$ of $X$. 
Using  transfinite recursion, 
we shall
define a sequence 
$\{q(\alpha, i)\}_{(\alpha, i)\in \kappa\times \zz_{\ge 0}}$
such that 
\begin{enumerate}[label=\textup{(\Alph*)}]
\item\label{item:inda} if $(\alpha, i), (\beta, j)\in \kappa\times \zz_{\ge 0}$ satisfy $(\alpha, i)\neq (\beta, j)$, 
then $q(\alpha, i)\neq q(\beta, j)$;
\item\label{item:indb} for all $(\alpha, i)\in \kappa\times \zz_{\ge 0}$, 
we have $q(\alpha, i)\in U_{i}\cap S$. 
\end{enumerate}
We assume that 
we have  already obtained a 
 sequence 
$\{q(\alpha, i)\}_{(\alpha, i)\prec (\theta, n)}$
satisfying the conditions
\ref{item:inda} and \ref{item:indb} for the set
$\{\, (\alpha, i)\mid (\alpha, i)\prec (\theta, n)\, \}$
instead of $\kappa\times \zz_{\ge 0}$. 
We shall construct $q(\theta, n) \in X$. 
Put $B=\{\, q(\alpha, i)\mid (\alpha, i)\prec (\theta, n) \, \}$. 
Since $(\kappa\times \zz_{\ge 0}, \prec)$
is isomorphic to $(\kappa, <)$, 
we have 
$\card(B)<\kappa$. 
Thus, from $\card(U_{n}\cap S)=\kappa$, 
it follows that
$(U_{n}\cap S)\setminus B\neq \emptyset$. 
Take $q(\theta, n)\in (U_{n}\cap S)\setminus B$. 
Then $\{q(\theta, n)\}\cup B$ satisfies 
the 
conditions (A) and (B)
 for the set
$\{(\theta, n)\}\cup \{\, (\alpha, i)\mid (\alpha, i)\prec (\theta, n)\, \}$
instead of $\kappa\times \zz_{\ge 0}$.
Therefore, by transfinite 
recursion, 
we obtain a sequence 
$\{q(\alpha, i)\}_{(\alpha, i)\in \kappa\times \zz_{\ge 0}}$
satisfying the condition (A) and (B).

For each $\alpha\in \kappa$, 
we define 
$B(\alpha)=\{\, q(\alpha, i)\mid i\in \zz_{\ge 0}\, \}$. 
Then the conditions (A) and (B) imply that 
the family  $\{B(\alpha)\}_{\alpha<\yocontinuum}$ satisfies the following conditions:
\begin{enumerate}[label=(\alph*)]
\item each $B(\alpha)$ is a subset of $S$;
\item if $\alpha, \beta\in \yocontinuum$ satisfy $\alpha\neq \beta$, then 
$B(\alpha)\cap B(\beta)=\emptyset$;
\item each $B(\alpha)$ is countable and dense in 
$X$. 
\end{enumerate}

We put
$\tau=
\card(S\setminus \bigcup_{\alpha<\kappa}B(\alpha))$ 
and 
we
take a bijection 
$\xi\colon \tau\to S\setminus \bigcup_{\alpha<\kappa}B_{\alpha}$. 
Notice that $\tau\le \kappa$. 
For each $\alpha<\kappa$, 
we define $A(\alpha)$ by 
$A(\alpha)=B(\alpha)\cup\{\xi(\alpha)\}$ if 
$\alpha<\tau$; 
otherwise,  $A(\alpha)=B(\alpha)$. 
Then the family $\{A(\alpha)\}_{\alpha<\kappa}$
is as desired. 
\end{proof}

The following
lemma is identical with \cite[Proposition 2.5]{Ishiki2022comeager}, 
which is related to 
metric-preserving functions.

\begin{lem}\label{lem:ceiling}
Let $X$ be a discrete topological space. 
Let $\eta\in (0, \infty)$
and 
$d\in \met(X)$.
 Then, there exists 
a metric $e\in \met(X; \eta\cdot \zz)$ 
such that 
 $\metdis_{X}(d, e)\le \eta$
 and $\eta\le e(x, y)$
 for all distinct 
 $x, y\in X$. 

\end{lem}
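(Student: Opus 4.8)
The plan is to obtain $e$ by rounding every value of $d$ up to the nearest integer multiple of $\eta$. Precisely, I would define $e\colon X\times X\to [0,\infty)$ by $e(x,y)=\eta\lceil d(x,y)/\eta\rceil$, which amounts to postcomposing $d$ with the function $f(t)=\eta\lceil t/\eta\rceil$; this $f$ is the metric-preserving function alluded to in the remark before the statement, and the whole content of the lemma is that such a composition again yields a metric with the two extra numerical features requested.

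The properties not involving the triangle inequality are immediate. Since $\lceil s\rceil\in\zz_{\ge 0}$ for $s\ge 0$, the function $e$ takes values in $\eta\cdot\zz_{\ge 0}\subset\eta\cdot\zz$, and symmetry of $e$ follows from that of $d$. If $x\neq y$ then $d(x,y)>0$, so $\lceil d(x,y)/\eta\rceil\ge 1$ and hence $e(x,y)\ge\eta$; this is precisely the required lower bound, and it also shows that $e(x,y)=0$ forces $x=y$, giving non-degeneracy. The same uniform lower bound makes $e$ uniformly discrete, so it generates the discrete topology of $X$ and therefore lies in $\met(X;\eta\cdot\zz)$. For the distance estimate I would use that $\lceil s\rceil-s\in[0,1)$ for every real $s$, whence $0\le e(x,y)-d(x,y)=\eta(\lceil d(x,y)/\eta\rceil-d(x,y)/\eta)<\eta$ for all $x,y\in X$; taking the supremum over all pairs yields $\metdis_{X}(d,e)\le\eta$.

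The one genuine point is the triangle inequality, which I expect to be the main (though mild) obstacle. It rests on two elementary properties of the ceiling function: monotonicity, and subadditivity $\lceil a+b\rceil\le\lceil a\rceil+\lceil b\rceil$ (the right-hand side being an integer that is at least $a+b$). Combining these with the triangle inequality for $d$, for arbitrary $x,y,z\in X$ I would estimate
\[
e(x,z)=\eta\lceil d(x,z)/\eta\rceil\le\eta\lceil (d(x,y)+d(y,z))/\eta\rceil\le\eta\bigl(\lceil d(x,y)/\eta\rceil+\lceil d(y,z)/\eta\rceil\bigr)=e(x,y)+e(y,z).
\]
The degenerate cases in which two of $x,y,z$ coincide are trivial, since then one of the three terms vanishes and the inequality reduces to an equality. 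This establishes that $e$ is a metric, and together with the previous paragraph it has all the asserted properties.
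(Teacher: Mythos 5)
Your proof is correct, and it is essentially the argument the paper relies on: the paper gives no proof of this lemma but cites a proposition about metric-preserving functions, and postcomposing $d$ with $f(t)=\eta\lceil t/\eta\rceil$ (monotone, subadditive, vanishing only at $0$) is exactly that construction. All the verifications you carry out (values in $\eta\cdot\zz$, the uniform lower bound $\eta$, the $\metdis_{X}$ estimate, and the triangle inequality via subadditivity of the ceiling) are sound.
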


For a set $X$, 
we define 
$[X]^{2}$ by 
$[X]^{2}
=\{\, \{x, y\}\mid x, y\in X, x\neq y\, \}$. 
Note that if $X$ is infinite, 
we have $\card(X)=\card([X]^{2})$. 
A metric $d$ on a set $X$ is said to be 
\emph{uniformly discrete} if 
there exists $c\in (0, \infty)$ such that 
$c<d(x, y)$ for all distinct $x, y\in X$.
\begin{thm}\label{thm:approxcontinuum}
Let $S$ be a ubiquitously  dense subset of $[0, \infty)$
with $0\in S$ and 
$\kappa=\card(S)$. 
Let $X$ be a discrete space 
with $\card(X)\le \kappa$
and $d\in \met(X)$.
If $\epsilon \in (0, \infty)$, 
then there exists a
metric $e\in\met(X;  S)$
such that 
\begin{enumerate}[label=\textup{(\arabic*)}]
\item\label{item:24:1} we have $\metdis_{X}(d, e)\le \epsilon$; 
\item\label{item:24:2}  the metric $e$ is uniformly discrete;
\item\label{item:24:3}  we have 
$e(x, y)<e(x, z)+e(z, y)$ for all distinct $x, y, z\in X$; 
\item\label{item24:4} the metric $e$ is strongly rigid. 
\end{enumerate}
\end{thm}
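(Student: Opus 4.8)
The plan is to separate the two competing requirements: the triangle inequality will be guaranteed by a coarse integer skeleton, while strong rigidity and membership in $S$ will be arranged by a careful choice of values inside the narrow intervals isolated in Proposition \ref{prop:triangular}. First I would fix $\eta\in(0,\infty)$ with $3\eta/2\le\epsilon$ and invoke Lemma \ref{lem:ceiling} to produce a metric $\rho\in\met(X;\eta\cdot\zz)$ with $\metdis_{X}(d,\rho)\le\eta$ and $\rho(x,y)\ge\eta$ for all distinct $x,y$. Writing $\rho(x,y)=\eta N_{\{x,y\}}$ with $N_{\{x,y\}}\in\zz_{\ge 1}$, the triangle inequality for the metric $\rho$ says precisely that $N_{\{x,y\}}\le N_{\{x,z\}}+N_{\{z,y\}}$ for all distinct $x,y,z$, which is the hypothesis of Proposition \ref{prop:triangular}.

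Next, to each $\{x,y\}\in[X]^{2}$ I would attach the non-empty open interval
\[
I_{\{x,y\}}=\bigl(\eta(N_{\{x,y\}}+2^{-N_{\{x,y\}}-1}),\ \eta(N_{\{x,y\}}+2^{-N_{\{x,y\}}})\bigr)\subseteq(0,\infty).
\]
Because $S$ is ubiquitously dense in $[0,\infty)$, each set $S\cap I_{\{x,y\}}$ has cardinality $\kappa$; recall also that such an $S$ is infinite, so $\kappa\ge\aleph_{0}$. Since $\card([X]^{2})\le\kappa$ (this is $\card(X)=\card([X]^{2})\le\kappa$ when $X$ is infinite, and is trivial when $X$ is finite), I would well-order $[X]^{2}$ with order type $\card([X]^{2})$ and define an injective family $\{e(x,y)\}_{\{x,y\}\in[X]^{2}}$ by transfinite recursion, choosing at each stage a point of $S\cap I_{\{x,y\}}$ distinct from the previously selected values: at a stage indexed by $\alpha$ the set of used values has cardinality $\card(\alpha)<\kappa$, whereas $S\cap I_{\{x,y\}}$ has cardinality $\kappa$, so an unused value always exists. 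Putting $e(x,x)=0$ and extending symmetrically gives the candidate $e$.

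It then remains to verify the four conclusions. Symmetry, $e(x,x)=0$, and $e(x,y)>0$ for $x\neq y$ hold by construction. For distinct $x,y,z$, applying Proposition \ref{prop:triangular} to the triple $M_{1}=e(x,y)/\eta$, $M_{2}=e(x,z)/\eta$, $M_{3}=e(z,y)/\eta$---each lying in the interval prescribed by its index---yields $e(x,y)<e(x,z)+e(z,y)$, which is \ref{item:24:3}; the degenerate cases in which two of the three points coincide are immediate, so $e$ is a metric. Since $e(x,y)>\eta N_{\{x,y\}}\ge\eta$ for all distinct $x,y$, the metric $e$ is uniformly discrete, giving \ref{item:24:2} and ensuring that $e$ generates the discrete topology, so that $e\in\met(X;S)$. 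The injectivity of $\{e(x,y)\}_{\{x,y\}\in[X]^{2}}$ together with positivity gives \ref{item24:4}. Finally, from $0<e(x,y)-\rho(x,y)<\eta 2^{-N_{\{x,y\}}}\le\eta/2$ we obtain $\metdis_{X}(\rho,e)\le\eta/2$, whence $\metdis_{X}(d,e)\le\eta+\eta/2\le\epsilon$, which is \ref{item:24:1}.

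I expect the crux to be the transfinite selection: one must respect the interval constraints that secure the triangle inequality through Proposition \ref{prop:triangular} while simultaneously forcing all pairwise distances to be pairwise distinct for strong rigidity, and it is exactly the ubiquitous density of $S$---guaranteeing $\kappa$ admissible values in every interval---that makes this cardinality bookkeeping succeed. The approximation and discreteness estimates are then routine.
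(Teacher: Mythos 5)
Your argument is correct, and its skeleton is the paper's: start from Lemma \ref{lem:ceiling} to get an integer-multiple skeleton $\rho$, then perturb each distance into the window $(N+2^{-N-1},\,N+2^{-N})$ (suitably scaled by $\eta$) so that Proposition \ref{prop:triangular} yields the strict triangle inequality, with ubiquitous density of $S$ supplying admissible values and the smallness of the windows giving the $\metdis_{X}$ estimate. Where you genuinely diverge is in how pairwise distinctness of the values --- hence strong rigidity --- is enforced. The paper first applies Theorem \ref{thm:densedecomposition} to split $\eta^{-1}S$ into $\kappa$ mutually disjoint countable dense sets $\{A(\alpha)\}_{\alpha<\kappa}$ and assigns to each pair $\{x,y\}$ its own piece $A(\theta_{\{x,y\}})$, so that distinct pairs automatically draw values from disjoint sets; you instead run a single transfinite recursion over a well-ordering of $[X]^{2}$ of order type $\card([X]^{2})\le\kappa$, choosing at each stage an element of $S\cap I_{\{x,y\}}$ not previously used. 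Your cardinality bookkeeping is sound: at stage $\alpha$ the used values number $\card(\alpha)<\card([X]^{2})\le\kappa$ while $\card(S\cap I_{\{x,y\}})=\kappa$ by ubiquitous density, and the finite case is trivial. The two devices are interchangeable for this theorem; yours is more self-contained since it bypasses Theorem \ref{thm:densedecomposition} and uses the defining property of ubiquitous density directly, whereas the paper's decomposition is a reusable tool it needs again later (e.g.\ in Lemma \ref{lem:decodeco}). Your remaining estimates ($e>\eta$ for uniform discreteness, $\metdis_{X}(\rho,e)\le\eta/2$, hence $\metdis_{X}(d,e)\le 3\eta/2\le\epsilon$) all check out.
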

\begin{proof}
We put $\eta=\epsilon/2$
and $T=\eta^{-1}\cdot S=\{\, \eta^{-1} s\mid s\in S\, \}$. 
Note that $T$ is ubiquitously dense 
in $[0, \infty)$ and $0\in T$. 

Lemma \ref{lem:ceiling} guarantees the existence 
of 
 a metric 
$h\in \met(X; \eta\cdot \zz_{\ge 0})$ 
such that $\metdis_{X}(h, e)\le \eta$. 
Put $u=\eta^{-1}\cdot h\in \met(X; \zz_{\ge 0})$. 
Due to Theorem \ref{thm:densedecomposition}, 
we can take a mutually disjoint dense decomposition 
$\{A(\alpha)\}_{\alpha<\kappa}$
 of 
$T$. 
Put $\tau=\card(X)$. 
Then $\tau\le \kappa$. 
We take a bijection 
$\varphi\colon \tau\to [X]^{2}$. 
We represent 
$\varphi(\alpha)=(x_{\alpha}, y_{\alpha})$
and $\theta_{\{x, y\}}=\varphi^{-1}(\{x, y\})$. 
For each $\alpha<\tau$, 
we put 
$N_{\alpha}=u(x_{\alpha}, y_{\alpha})\in \zz_{\ge 1}$
and 
take 
 $w(\alpha)\in (N_{\alpha}+2^{-N_{\alpha}-1}, N_{\alpha}+2^{-N_{\alpha}})\cap A(\alpha)$. 
 Since each $A(\alpha)$ is dense in $[0, \infty)$, 
the existence of   $w(\alpha)$ is 
always guaranteed.

 We define a function $v\colon X^{2}\to [0, \infty)$
  by $v(x, y)=w(\theta_{\{x, y\}})$ 
  if $x\neq y$; 
  otherwise, $v(x, x)=0$. 
According to  Proposition \ref{prop:triangular}, 
the function  
$v\colon X^{2}\to [0, \infty)$ satisfies the triangle inequality. 
Since $1+2^{-2}\le v(x, y)$ for all distinct $x, y\in X$, 
the metric $v$ generates the discrete topology on $X$, 
namely, 
$v\in \met(X; T)$. 
Notice that $\metdis_{X}(u, v)\le 1$. 

Put $e=\eta\cdot v $. 
Since $v$ is in $\met(X)$, 
so is $e$. 
By $v\in \met(X; T)$ and $T=\eta^{-1}\cdot S$, 
we have $e\in \met(X; S)$. 

Using 
$h=\eta\cdot u$, $e=\eta\cdot v$, 
$\metdis_{X}(d, h)\le \eta$, and 
$\metdis_{X}(u, v)\le 1$, 
we obtain 
\begin{align*}
\metdis_{X}(d, e)&\le \metdis_{X}(d, h)+\metdis_{X}(h, e)=
\metdis_{X}(d, h)+\metdis_{X}(\eta\cdot u, \eta\cdot v)
\\
&\le
\eta+\eta\metdis_{X}(u, v)\le 
2\eta=\epsilon.
\end{align*}
This implies  condition \ref{item:24:1}. 

Since $1+2^{-2}\le v(x, y)$ for all distinct $x, y\in X$, we observe that $e(=\eta\cdot v)$ is uniformly discrete. 
This means that   condition \ref{item:24:2} is true.

From  Proposition \ref{prop:triangular}, 
it follows  that 
$e(x, y)<e(x, z)+e(z, y)$ for all distinct 
$x, y, z\in X$.
This proves   condition
 \ref{item:24:3}. 

We shall prove $e$ is strongly rigid. 
Take 
$\{x, y\}, \{a, b\}\in [X]^{2}$ with 
$\{x, y\}\neq \{a, b\}$. 
Then 
$\theta_{\{x, y\}}\neq \theta_{\{a, b\}}$
and 
$A\left(\theta_{\{x, y\}}\right)
\cap 
A\left(\theta_{\{a, b\}}\right)=\emptyset$. 
In particular, we have 
$w\left(\theta_{\{x, y\}}\right)\neq 
w\left(\theta_{\{a, b\}}\right)$, 
and hence $e(x, y)\neq e(a, b)$. Namely, 
the metric 
$e$ is strongly rigid. 
This implies that $e$ satisfies  condition 
\ref{item24:4}. 
\end{proof}

\begin{rmk}
In the case where  $X$ is finite, 
Theorem \ref{thm:approxcontinuum} gives 
a new proof of \cite[Lemma 3]{MR2831899} stating that the set of finite metric spaces which are
totally anisometric (strongly rigid) and without collinear points is dense in the Gromov--Hausdorff space. 
\end{rmk}

\section{\bf A system of linearly independent numbers over $\qq$}\label{sec:qq}
In this subsection, we 
give a system yielding 
 real numbers which are 
 linearly independent   over $\qq$. 

\begin{df}\label{df:inprod}
Let $\alpha=\{a_{i}\}_{i\in \zz_{\ge 0}}$
 be a summable sequence of positive real numbers. 
Let $\yoqqq\colon \zz_{\ge 0}\to \qq_{\ge 0}$ be a
bijection. 
For a non-empty subset $B$ of $\qq_{\ge 0}$, 
we define 
\[
\yoinpq{\alpha}{B}=\sum_{\yoqqq(i)\in B}a_{i}. 
\]
If $B=\emptyset$, 
we also define $\yoinpq{\alpha}{\emptyset}=0$. 
\end{df}

\begin{df}\label{df:fastmap}
We denote by 
$\yofastset$
the set of all 
 $\yonfunc\colon \zz_{\ge 0}\to \zz_{\ge 0}$ 
such that $F$ is strictly increasing and
satisfies 
\[
\lim_{n\to \infty}(\yonfunc(n+1)-\yonfunc(n))=\infty,  
\]
and 
\[
\lim_{n\to \infty}\sum_{m=n+1}^{\infty}2^{\yonfunc(n)-\yonfunc(m)}=0. 
\]
\end{df}
The author is inspired by 
\cite{mathof}
(see also \cite{MR1512442} and \cite{MR173645}) with respect to a  construction of  linearly independent real numbers over $\qq$ in the following proposition:
\begin{prop}\label{prop:independent5}
Let $\yonfunc\in \yofastset$. 
We define a sequence $\lambda=\{\lambda_{i}\}_{i\in \zz_{\ge }}$ by $\lambda_{i}=2^{-\yonfunc(i)}$. 
Let $k\in \zz_{\ge 0}$ and
 $\{P_{i}\}_{i=0}^{k}$  a 
family of subsets of $\qq_{\ge 0}$. 
Let $S$ be a subset of $\qq_{\ge 0}$. 
We assume that 
 there exist $a$, $b_{0}, \dots, b_{k}$ in $[0, \infty)$ such that 
\begin{enumerate}[label=\textup{(\Roman*)}]
\item $a<b_{i}$ for all $i\in \{0, \dots, k\}$; 
\item if $i\neq j$, then  $b_{i}\neq b_{j}$;
\item 
we have 
$S\cap P_{i}=[a, b_{i})\cap \qq_{\ge 0}$ for all 
$i\in \{0, \dots, k\}$. 
\end{enumerate}
Then the $(k+2)$-many numbers 
$\yoinpq{\lambda}{P_{0}}, \dots, \yoinpq{\lambda}{P_{k}}$,  and $1$ are linearly independent 
over $\qq$. 
\end{prop}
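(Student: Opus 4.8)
The plan is to argue by contradiction: suppose the $k+2$ numbers are linearly dependent over $\qq$, fix a nontrivial rational relation, and clear denominators so that it becomes an integer relation $\sum_{i=0}^{k} n_i \yoinpq{\lambda}{P_i} + n = 0$ with $n_0, \dots, n_k, n \in \zz$ not all zero. Expanding each $\yoinpq{\lambda}{P_i} = \sum_{Q(j) \in P_i} 2^{-\yonfunc(j)}$ and interchanging the (absolutely convergent) double sum, I would rewrite the relation as a single series
\[
\sum_{j \in \zz_{\ge 0}} m_j 2^{-\yonfunc(j)} = -n, \qquad m_j := \sum_{i \,:\, Q(j) \in P_i} n_i,
\]
where each integer coefficient satisfies $|m_j| \le M := \sum_{i=0}^k |n_i|$.

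The engine of the proof is a digit-extraction argument exploiting that $\yonfunc \in \yofastset$. For a fixed index $j_0$, multiply through by $2^{\yonfunc(j_0)}$ and split the sum at $j_0$: since $\yonfunc$ is strictly increasing, the terms with $j \le j_0$ together with $-n\, 2^{\yonfunc(j_0)}$ are integers, so the tail remainder $R_{j_0} := \sum_{j > j_0} m_j 2^{\yonfunc(j_0) - \yonfunc(j)}$ must be an integer. On the other hand $|R_{j_0}| \le M \sum_{j>j_0} 2^{\yonfunc(j_0)-\yonfunc(j)}$, and the second defining property of $\yofastset$ forces this bound below $1$ for all large $j_0$; hence $R_{j_0} = 0$ for every sufficiently large $j_0$. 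Factoring $R_{j_0} = 2^{\yonfunc(j_0)-\yonfunc(j_0+1)}(m_{j_0+1} + R_{j_0+1})$ and using $R_{j_0} = R_{j_0+1} = 0$ then yields $m_{j_0+1} = 0$. Thus $m_j = 0$ for all $j$ beyond some threshold $J$.

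It remains to transfer this vanishing back to the $n_i$ through conditions (I)--(III). After relabeling so that $b_0 < \dots < b_k$ and setting $b_{-1} = a$, note that $[a, b_k) \cap \qq_{\ge 0} \subseteq S$, so for a rational $q \in [b_{l-1}, b_l)$ condition (III) gives $q \in P_i$ exactly when $i \ge l$, whence $m_{Q^{-1}(q)} = \sum_{i \ge l} n_i =: S_l$. Each interval $[b_{l-1}, b_l) \cap \qq_{\ge 0}$ is infinite, so it contains $Q(j)$ for some index $j \ge J$; for such $j$ we read off $S_l = 0$, and this holds for every $l \in \{0, \dots, k\}$, so $n_l = S_l - S_{l+1} = 0$ for all $l$. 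Feeding this back into the original relation forces $n = 0$ as well, contradicting nontriviality.

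I expect the main obstacle to be the fact that the two conditions defining $\yofastset$ are only asymptotic, so one cannot simply isolate the least nonzero coefficient (it might sit at a small index where the super-decreasing estimate fails). The resolution is precisely the tail-based extraction above, which operates at arbitrarily large indices; getting the bookkeeping of $R_{j_0}$ right --- in particular that it is simultaneously an integer and strictly smaller than $1$ in modulus --- is the delicate point, together with verifying that every interval $[b_{l-1}, b_l)$ still supplies large-index rationals so that the conclusion $m_j = 0$ can be read off at each level $l$.
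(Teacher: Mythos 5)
Your proposal is correct, and its analytic engine is the same as the paper's: multiply the integer relation by $2^{\yonfunc(n)}$, split into a head (an integer, since $\yonfunc$ is increasing) and a tail, bound the tail below $1$ using the second defining property of $\yofastset$, and conclude the tail is the integer $0$. Where you diverge is in how the individual coefficients are then extracted. The paper picks, for the top coefficient $c_k$, a single large index $n$ with $Q(n)\in P_k\setminus\bigcup_{i<k}P_i$ (such indices exist because $[b_{k-1},b_k)\cap\qq_{\ge 0}$ is infinite), imposes the extra condition $|c_k|<2^{\yonfunc(n)-\yonfunc(n-1)}$, and reads off $c_k=0$ from a divisibility argument modulo $2^{\yonfunc(n)-\yonfunc(n-1)}$, then repeats by downward induction on the coefficients. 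You instead first kill \emph{all} the combined digit coefficients $m_j$ for large $j$ by comparing two consecutive vanishing tails, $R_{j_0}=2^{\yonfunc(j_0)-\yonfunc(j_0+1)}(m_{j_0+1}+R_{j_0+1})$, and only afterwards decode the $n_i$ from the level sums $S_l=\sum_{i\ge l}n_i$ read off on the (infinite) rational intervals $[b_{l-1},b_l)$. This buys you a cleaner bookkeeping: you never need the size condition on $c_k$ or the divisibility-by-$2^{\yonfunc(n)-\yonfunc(n-1)}$ computation, and the induction over coefficients is replaced by elementary linear algebra on the telescoping sums $n_l=S_l-S_{l+1}$. The one point you should make explicit in a write-up is the interchange of summation defining $m_j$, which is licit because it is a finite combination of absolutely convergent series; everything else checks out.
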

\begin{proof}
We may 
 assume that 
 $b_{i}<b_{i+1}$ for all 
 $i\in \{0, \dots, k-1\}$. 
For every $i\in \{0, \dots, k\}$, 
we define 
$r(i)=\yoinpq{\lambda}{P_{i}}$. 
To prove the proposition, 
we assume that  integers 
$c_{0}, \dots, c_{k}, c_{k+1}\in \zz$ 
satisfy 
\begin{align}\label{al:indep}
c_{0}\cdot r(0)+\dots +c_{k}\cdot r(k)+c_{k+1}\cdot 1=0. 
\end{align}
We first prove $c_{k}=0$. 
Since $S\cap P_{k}=[a, b_{k})\cap \qq$ and 
$b_{k-1}<b_{k}$, we observe that 
the set $P_{k}\setminus \left(\bigcup_{i=0}^{k-1}P_{i}\right)$ is infinite. 
Thus, 
by $\yonfunc\in \yofastset$, 
we can 
take a sufficient large $n\in \zz_{\ge 0}$ such that 
\begin{enumerate}[label=\textup{(\arabic*)}]
\item\label{item:<1} $(|c_{0}|+\dots +|c_{k}|)\sum_{m=n+1}^{\infty}2^{\yonfunc(n)-\yonfunc(m)}<1$. 
\item\label{item:pm}
$n\in P_{k}$
\item\label{item:p-m} 
we have 
$n\not\in \bigcup_{i=0}^{k-1}P_{i}$. 
\item\label{item:small}
$|c_{k}|<2^{\yonfunc(n)-\yonfunc(n-1)}$. 
\end{enumerate}
Put 
\[
I=c_{0}\sum_{j\in P_{i}\cap [0, n]}2^{\yonfunc(n)-\yonfunc(j)}
+\dots+
c_{k}\sum_{j\in P_{i}\cap [0, n]}
2^{\yonfunc(n)-\yonfunc(j)}
+c_{k+1}2^{\yonfunc(n)}. 
\]
and 
\[
J
=c_{0}\sum_{j\in P_{i}\cap (n, \infty)}
2^{\yonfunc(n)-\yonfunc(j)}
+\dots+
c_{k}\sum_{j\in P_{i}\cap (n, \infty)}2^{\yonfunc(n)-\yonfunc(j)}. 
\]
Then the equality \eqref{al:indep} implies 
\[
I+J=2^{\yonfunc(n)}\times
(c_{0}r(0)+\dots +c_{k}r(k)+c_{k+1})=0. 
\]

By \ref{item:<1}, we have $|J|<1$. 
Note that $I\in \zz$
(if $j\le n$, then $\yonfunc(n)-\yonfunc(j)\ge 0$). 
 By $J=-I$, we obtain 
 $J\in \zz$. 
 Combining $J\in \zz$ and $|J|<1$, 
we conclude that $J=0$. 
Thus, we
also  have $I=0$. 

Since for every $j\le n-1$
we have $\yonfunc(n)-\yonfunc(n-1)\le \yonfunc(n)-\yonfunc(j)$,
the number 
$2^{\yonfunc(n)-\yonfunc(j)}$ can be divided by 
$2^{\yonfunc(n)-\yonfunc(n-1)}$. 
Thus, by $n\in P_{k}$, 
 we have 
\begin{align}\label{al:1+m}
\sum_{j\in P_{k}\cap [0, n]}2^{\yonfunc(n)-\yonfunc(j)}
=1+L_{k}\cdot 2^{\yonfunc(n)-\yonfunc(n-1)}
\end{align}
for some $L_{k}\in \zz$. 
Due to \ref{item:p-m}, 
 for all $i$ with $i<k$, 
we have  $n\not\in P_{i}$. 
Hence 
\begin{align}\label{al:0+m}
\sum_{j\in P_{i}\cap [0, n]}
2^{\yonfunc(n)-\yonfunc(j)}
=L_{i}\cdot 2^{\yonfunc(n)-\yonfunc(n-1)}
\end{align}
for some $L_{i}\in \zz$. 
Since $c_{k+1}$ is an integer, 
and since $\yonfunc(n)-\yonfunc(n-1)<\yonfunc(n)$, 
\begin{align}\label{al:somek+1}
c_{k+1}2^{\yonfunc(n)}=L_{k+1}\cdot 2^{\yonfunc(n)-\yonfunc(n-1)}
\end{align}
for some $L_{k+1}\in \zz$ 
(of cause, 
$L_{k+1}=c_{k+1}2^{\yonfunc(n-1)}$). 

From  $I=0$, 
and 
the equalities 
\eqref{al:1+m}, 
\eqref{al:0+m},
and \eqref{al:somek+1}, 
it follows that 
\[
c_{k}=M\cdot 2^{\yonfunc(n)-\yonfunc(n-1)}
\]
for some $M\in \zz$. 
By $|c_{k}|<2^{\yonfunc(n)-\yonfunc(n-1)}$
(the condition \ref{item:small}), 
we conclude that  $M=0$. 
Thus $c_{k}=0$. 
Using the same argument, by induction, 
 we obtain 
$c_{k}=c_{k-1}=\dots=c_{0}=0$. 
Hence $c_{k+1}=0$. 
This means that 
the numbers 
$\yoinpq{\lambda}{P_{0}}, \dots, \yoinpq{\lambda}{P_{k}}$ and $1$ are linearly independent 
over $\qq$. 
\end{proof}

\section{\bf A construction of strongly rigid metrics}\label{sec:rigid}
In this subsection, 
we construct a strongly rigid metric on 
a given  strongly $0$-dimensional 
metrizable 
space.

 The following 
 proposition 
 is deduced from 
 the existence and uniqueness of 
 the binary representation of a real number
 with infinitely  many digits which are $1$. 
 We omit the proof. 

\begin{prop}\label{prop:base222}
Let $f\colon \zz_{\ge 0}\to \zz_{\ge 0}$ be 
an injective map
and
define  a sequence $\lambda=\{\lambda_{i}\}_{i\in \zz_{\ge 0}}$  
 by 
$\lambda_{i}=2^{-f(i)}$. 
Fix a bijection $\yoqqq\colon \zz_{\ge 0}\to \qq_{\ge 0}$. 
Let $S$ and $T$ be 
infinite or empty subsets of 
$\qq_{\ge 0}$. 
If $\yoinpq{\lambda}{S}=
\yoinpq{\lambda}{T}$, 
then we have 
$S=T$. 
\end{prop}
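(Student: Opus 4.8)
The plan is to reduce the claim to the uniqueness of binary expansions having infinitely many nonzero digits. First I would translate the hypothesis into a statement about subsets of $\zz_{\ge 0}$. Since $\yoqqq\colon \zz_{\ge 0}\to \qq_{\ge 0}$ is a bijection, a subset $S$ of $\qq_{\ge 0}$ is determined by $\yoqqq^{-1}(S)$, and since $f$ is injective the assignment $S\mapsto A_{S}:=f(\yoqqq^{-1}(S))$ is injective. Unwinding Definition \ref{df:inprod} gives $\yoinpq{\lambda}{S}=\sum_{\yoqqq(i)\in S}2^{-f(i)}=\sum_{n\in A_{S}}2^{-n}$, where the last equality uses injectivity of $f$ so that, as $i$ ranges over $\yoqqq^{-1}(S)$, the exponent $f(i)$ ranges bijectively over $A_{S}$. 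The key structural point is that if $S$ is infinite then $A_{S}$ is an infinite subset of $\zz_{\ge 0}$ (again by injectivity of $f$ and bijectivity of $\yoqqq$), while if $S=\emptyset$ then $A_{S}=\emptyset$. Hence it suffices to prove the following: if $A, B\subseteq \zz_{\ge 0}$ are each either infinite or empty and $\sum_{n\in A}2^{-n}=\sum_{n\in B}2^{-n}$, then $A=B$. Applying this with $A=A_{S}$ and $B=A_{T}$, and then using injectivity of $S\mapsto A_{S}$, would yield $S=T$.

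To prove this binary-uniqueness statement I would argue by contradiction. Suppose $A\neq B$, and let $m$ be the least element of the nonempty set $(A\setminus B)\cup(B\setminus A)$; by symmetry assume $m\in A\setminus B$. By minimality of $m$, the sets $A$ and $B$ agree on $\{0,\dots,m-1\}$, so those terms cancel in the equality of the two (absolutely convergent) series, leaving $\sum_{n\in A,\ n\ge m}2^{-n}=\sum_{n\in B,\ n\ge m}2^{-n}$. The left-hand side is at least $2^{-m}$ because $m\in A$ and all terms are positive, whereas the right-hand side is at most $\sum_{n>m}2^{-n}=2^{-m}$ because $m\notin B$. Therefore both sides equal $2^{-m}$; in particular $\sum_{n\in A,\ n>m}2^{-n}=0$, which forces $A\cap\{n:n>m\}=\emptyset$, i.e. $A\subseteq\{0,\dots,m\}$.

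But $m\in A$ shows $A\neq\emptyset$, so by hypothesis $A$ is infinite, contradicting $A\subseteq\{0,\dots,m\}$. The symmetric case $m\in B\setminus A$ is handled identically. This contradiction gives $A=B$, and the reduction of the first paragraph then completes the proof.

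The main obstacle, and the only genuinely delicate point, is the degenerate ``$0.1000\dots=0.0111\dots$'' phenomenon: a finite dyadic sum can coincide with an infinite one, so $\sum_{n\in A}2^{-n}=\sum_{n\in B}2^{-n}$ does \emph{not} imply $A=B$ in general (take $A=\{m\}$ and $B=\{n:n>m\}$). This is precisely why the hypothesis that $S$ and $T$ be infinite or empty is indispensable, and it is exactly where the ``infinitely many $1$'s'' convention enters: the forced conclusion $A\subseteq\{0,\dots,m\}$ makes $A$ finite, which the hypothesis excludes. Everything else is routine bookkeeping with absolutely convergent series.
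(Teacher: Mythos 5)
Your proof is correct, and it is exactly the argument the paper has in mind: the paper omits the proof of Proposition \ref{prop:base222}, remarking only that it follows from the uniqueness of binary representations with infinitely many digits equal to $1$, and your reduction via $A_{S}=f(\yoqqq^{-1}(S))$ plus the least-disagreeing-index argument is a careful writeup of precisely that. Your observation that the ``infinite or empty'' hypothesis is what rules out the $0.1000\dots=0.0111\dots$ degeneracy correctly identifies the one delicate point.
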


 \begin{df}\label{df:Fk}
Let $k\in \zz_{0}$. 
 We define 
 $\yonfunc_{k}\colon \zz_{\ge 0}\to \zz_{\ge 0}$ by 
 $\yonfunc_{k}(n)=
 2^{n}+k$. 
Observe  that $\yonfunc_{k}\in \yofastset$. 
 We also 
 define a sequence 
 $\yogeomq{k}=\{\yogeomqt{k}{i}\}_{i\in \zz_{\ge 0}}$ 
 by 
 $\yogeomqt{k}{i}=2^{-\yonfunc_{k}(i)}$. 
 \end{df}

\begin{df}\label{df:yoyi}
In what follows, 
we fix the discrete space $\yoyi$ 
such that 
$\card(\yoyi)=\yocontinuum$. 
We define 
$\yoyizz=\yoyi^{\zz_{\ge 0}}$. 
We consider that 
the set $\yoyizz$ is always  equipped with 
the product topology. 
\end{df}
Remark  that the space $\yoyizz$ is sometimes called 
the  \emph{Baire space of weight $\yocontinuum$}
(see \cite{MR0394604}
or 
\cite{MR152457}). 

Our first purpose  is 
to construct 
strongly rigid metrics on 
$\yoyi$ and  $\yoyizz$. 
For this purpose, 
we utilize 
strongly rigid semi-metrics.

\begin{df}\label{df:semimetric}
Let $S$ be a subset of $[0, \infty)$. 
We say that a map 
$r\colon X\times X\to [0, \infty)$ is 
an 
\emph{$S$-semi-metric on $X$} if 
\begin{enumerate}[label=\textup{(\arabic*)}]
\item for all $x, y\in X$, we have $r(x, y)=r(y, x)$;
\item for all $x, y\in X$, we have 
$r(x, y)=0$ if and only if $x=y$; 
\item for all distinct $x, y\in X$, we have $r(x, y)\in S$. 
\end{enumerate}
The \emph{strong rigidity of an  $S$-semi-metric} is 
defined  just as with ordinary metrics. 
\end{df}
Note that semi-metrics are not assumed  to satisfy 
the triangle inequality.

\begin{prop}\label{prop:existssrsm}
Let $S$ be a subset $(0, \infty)$ with $\card(S)=\yocontinuum$. 
Then there exists a strongly rigid 
$S$-semi-metric $r$ on $\yoyi$.
\end{prop}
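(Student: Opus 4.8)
The plan is to recognize that giving a strongly rigid $S$-semi-metric on $\yoyi$ is the same thing as giving an injection from the set $[\yoyi]^{2}$ of two-element subsets into $S$. Indeed, a symmetric map $r$ that vanishes exactly on the diagonal and takes values in $S$ off the diagonal is completely determined by its restriction to unordered pairs, that is, by a map $\iota\colon [\yoyi]^{2}\to S$ via $r(x,y)=\iota(\{x,y\})$ for $x\neq y$. Moreover the strong rigidity condition, which demands that $r(x,y)=r(u,v)\neq 0$ force $\{x,y\}=\{u,v\}$, is precisely the statement that $\iota$ is injective. So the whole proposition reduces to producing an injection of $[\yoyi]^{2}$ into $S$.

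This is now a pure cardinality count. First I would note that, since $\yoyi$ is infinite with $\card(\yoyi)=\yocontinuum$, the remark following the definition of $[X]^{2}$ gives $\card([\yoyi]^{2})=\card(\yoyi)=\yocontinuum$. Because $\card(S)=\yocontinuum$ by hypothesis, there is a bijection, and in particular an injection, $\iota\colon [\yoyi]^{2}\to S$.

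Finally I would define $r(x,x)=0$ and $r(x,y)=\iota(\{x,y\})$ for $x\neq y$, and verify the three defining conditions of an $S$-semi-metric together with strong rigidity. Symmetry is immediate from $\{x,y\}=\{y,x\}$; the vanishing condition holds because $S\subseteq(0,\infty)$, so $r(x,y)=\iota(\{x,y\})>0$ whenever $x\neq y$, while $r(x,x)=0$; and $r$ takes values in $S$ off the diagonal by construction. Strong rigidity is exactly the injectivity of $\iota$ observed above.

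There is essentially no obstacle here: the content of the proposition is the purely set-theoretic equality $\card([\yoyi]^{2})=\card(S)=\yocontinuum$, and once an injection $\iota$ is fixed the verification is routine. The only point worth keeping in view is that $\iota$ lands in $S\subseteq(0,\infty)$, which makes positivity off the diagonal automatic; no triangle inequality needs checking, since semi-metrics are not assumed to satisfy one.
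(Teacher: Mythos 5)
Your proposal is correct and is essentially identical to the paper's own proof: both take a bijection (or injection) $[\yoyi]^{2}\to S$, which exists because $\card([\yoyi]^{2})=\card(\yoyi)=\yocontinuum=\card(S)$, and define $r(x,y)=\iota(\{x,y\})$ off the diagonal and $0$ on it. Your verification of the semi-metric axioms and of strong rigidity is the same routine check the paper leaves implicit.
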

\begin{proof}
Since $\card([\yoyi]^{2})=\yocontinuum$, 
we can take a bijection $\phi\colon [\yoyi]^{2}\to S$. 
We define 
$r(x, y)=\phi(\{x, y\})$ if $x\neq y$, 
otherwise, $r(x, x)=0$. 
Then $r$ is as desired. 
\end{proof}

\begin{df}\label{df:propertyq}
Let $\yoqqq\colon \zz_{\ge 0}\to \qq_{\ge 0}$
be a bijection. 
We define 
$\yominq{m}=\min\yoqqq^{-1}([m, m+1)\cap \qq)$. 
We say that the map $\yoqqq$ satisfies the
\emph{property $\yomark$} if 
\begin{enumerate}
\item for all $m\in \zz_{\ge 0}$, 
we have $Q(\yominq{m})=m$; 
\item for all $m\in \zz_{\ge 0}$, we have 
$\yominq{m}<\yominq{m+1}$. 
\end{enumerate}
Notice that $\yominq{0}=0$. 
\end{df}

\begin{lem}\label{lem:existsheart}
There exists a bijection 
$\yoqqq\colon \zz_{\ge 0}\to \qq_{\ge 0}$ 
satisfying the property 
$\yomark$. 
\end{lem}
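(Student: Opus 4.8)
The plan is to first unwind the property $\yomark$ into two transparent combinatorial requirements on the enumeration $\yoqqq$, and then to build such an enumeration by a Cantor-style diagonal argument that lists each integer before all the other rationals in its unit block.

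First I would observe what $\yomark$ really demands. For each $m\in\zz_{\ge 0}$ the index $\yominq{m}$ is, by definition, the first position at which $\yoqqq$ takes a value in the block $[m,m+1)\cap\qq$. Condition (1) asserts that the value at this first position is the integer $m$ itself; equivalently, no non-integer rational of $[m,m+1)$ is enumerated before $m$. Granting (1), we have $\yominq{m}=\yoqqq^{-1}(m)$ and $\yominq{m+1}=\yoqqq^{-1}(m+1)$, so condition (2) says precisely that $m$ is enumerated before $m+1$. Iterating, I claim $\yomark$ is equivalent to the two requirements: \emph{(i)} the integers are enumerated in their natural increasing order, and \emph{(ii)} for every $m$, the integer $m$ precedes every rational of $(m,m+1)\cap\qq$.

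Second, I would construct an enumeration meeting (i) and (ii). Partition $\qq_{\ge 0}$ into the blocks $\qq_{\ge 0}\cap[m,m+1)$ for $m\in\zz_{\ge 0}$. Inside block $m$ put the integer $m$ first and enumerate the remaining (countably infinitely many) rationals of $(m,m+1)$ by some fixed bijection as $b_{m,1},b_{m,2},\dots$, writing $b_{m,0}=m$; then $(m,j)\mapsto b_{m,j}$ is a bijection of $\zz_{\ge 0}\times\zz_{\ge 0}$ onto $\qq_{\ge 0}$. Now I list the pairs $(m,j)$ by increasing value of $s=m+j$, breaking ties by increasing $m$, and let $\yoqqq$ be the resulting enumeration. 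This is the standard diagonal bijection, so $\yoqqq$ is a bijection $\zz_{\ge 0}\to\qq_{\ge 0}$.

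Finally, the verification is immediate and is really the only step requiring care. Each integer $b_{m,0}=m$ sits on the diagonal $s=m$, while every $b_{m,j}$ with $j\ge 1$ sits on a strictly later diagonal $s=m+j>m$; since later diagonals are listed later, $m$ precedes all non-integers of its block, giving (ii). Moreover the integers $b_{0,0},b_{1,0},b_{2,0},\dots$ occupy the successive diagonals $s=0,1,2,\dots$, hence are listed in increasing order, giving (i); in particular $b_{0,0}=0$ is the very first term, so $\yoqqq(0)=0$ and $\yominq{0}=0$. The main (and only mild) obstacle is the equivalence in the first step: one must check carefully that reading $\yominq{m}$ as a \emph{minimum} over a whole block forces the first-listed element of that block to be the integer, which is exactly what condition (1) encodes and what the diagonal ordering is engineered to produce.
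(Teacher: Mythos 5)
Your proof is correct and is essentially the paper's argument in a more explicit form: the paper partitions $\zz_{\ge 0}$ into infinite sets $A_i$ with $\min A_i<\min A_{i+1}$ and glues bijections $A_i\to[i,i+1)\cap\qq$ sending $\min A_i\mapsto i$, and your diagonal ordering simply produces one concrete such partition (with $A_m$ the set of positions of the $b_{m,j}$, whose minimum is the position of $b_{m,0}=m$ on diagonal $m$). The preliminary reformulation of $\yomark$ as ``integers listed in increasing order, each before the rest of its block'' is accurate and the verification is sound.
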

\begin{proof}
Take a mutually disjoint family 
$\{A_{i}\}_{i\in \zz_{\ge 0}}$ of 
infinite subsets of $\zz_{\ge 0}$
satisfying that 
$\bigcup_{i\in \zz_{\ge 0}}A_{i}=\zz_{\ge 0}$
and 
$\min A_{i}<\min A_{i+1}$ for all $i\in \zz_{\ge 0}$. 
For each $i\in \zz_{\ge 0}$, 
we take a bijection $\theta_{i}\colon A_{i}\to [i, i+1)\cap \qq$ with $\theta_{i}(\min A_{i})=i$. 
Gluing  them together, we obtain a 
bijection $\zz_{\ge 0}\to \qq_{\ge 0}$ with 
the property $\yomark$. 
\end{proof}

\begin{rmk}
In what follows, 
based on 
Lemma \ref{lem:existsheart}, 
 we fix the bijection $\yoqqq\colon \zz_{\ge 0}\to \qq_{\ge 0}$ with the property 
 $\yomark$. 
\end{rmk}

\begin{df}\label{df:Jset}
Fix $k\in \zz_{\ge 0}$. 
Let $m\in \zz_{\ge 0}$
and $r$ an $S$-semi-metric on 
$\yoyi$. 
We define 
$\yobbb{m, r(x, y)}=[m, r(x, y))\cap \qq$. 
Note that if $x=y$, 
the set 
$\yobbb{m, r(x, y)}$ is always 
empty. 
We also define $\yoinduce{r}{k}{m}\colon \yoyi\times \yoyi\to 
[0, \infty)$ by 
$\yoinduce{r}{k}{m}(x, y)=
\yoinpq{\yogeomq{k}}{\yobbb{m, r(x, y)}}$. 
\end{df}
\begin{rmk}
Under the same assumptions as 
in Definition \ref{df:Jset}, 
we notice that 
$\yobbb{0, r(x, x)}=[0, 0)\cap \qq=\emptyset$
for all $x\in \yoyi$. 
\end{rmk}

 \begin{lem}\label{lem:stst}
 Fix $k\in \zz_{\ge 0}$. 
  Let $m\in \zz_{\ge 0}$
  and  
$S$ a subset of $(m, m+1)$. 
  Let $X$ be a discrete space
 with $\card(X)\le \yocontinuum$. 
If $r$ is a strongly rigid 
 $S$-semi-metric on $\yoyi$, 
then the following statements are true:
\begin{enumerate}[label=\textup{(\arabic*)}]
\item\label{item:210:1}
We have 
$\yoinduce{r}{k}{m}(x, y)\in (\yogeomqt{k}{\yominq{m}}, 
2\yogeomqt{k}{\yominq{m}})$ for all 
distinct $x, y\in X$. 
\item\label{item:210:2}
The function $\yoinduce{r}{k}{m}$ is a metric on $\yoyi$ and 
we have 
 $\yoinduce{r}{k}{m}(x, y)\in \met(\yoyi)$. 
\item\label{item:210:3}
The metric $\yoinduce{r}{k}{m}$ is strongly rigid. 
\end{enumerate}
 \end{lem}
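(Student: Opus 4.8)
The plan is to establish the three assertions in order, since (1) is the quantitative core that makes (2) and (3) essentially formal. Throughout I would write $M=\yominq{m}$ and $a=\yogeomqt{k}{M}$; by the property $\yomark$ we have $\yoqqq(M)=m$, and $M$ is the least index $i$ with $\yoqqq(i)\in[m,m+1)\cap\qq$, so that $a=2^{-(2^{M}+k)}$ and $\yoinduce{r}{k}{m}(x,y)=\sum_{\yoqqq(i)\in[m,r(x,y))\cap\qq}2^{-(2^{i}+k)}$. For (1), fix distinct $x,y$. Since $r(x,y)\in S\subseteq(m,m+1)$ we have $r(x,y)>m$, so $m\in[m,r(x,y))\cap\qq$ and the term indexed by $M$ occurs in the sum, contributing exactly $a$; as $[m,r(x,y))$ has positive length it contains infinitely many rationals, so the sum has infinitely many positive terms and $\yoinduce{r}{k}{m}(x,y)>a$. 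For the upper bound I would note that every index $i$ appearing in the sum satisfies $\yoqqq(i)\in[m,m+1)$, hence $i\ge M$, so $\yoinduce{r}{k}{m}(x,y)\le\sum_{i\ge M}2^{-(2^{i}+k)}$; the elementary inequality $2^{i}-2^{M}\ge i-M$ for $i\ge M$ (strict once $i\ge M+2$) gives $2^{-2^{i}}\le 2^{-2^{M}}2^{-(i-M)}$, whence $\sum_{i\ge M}2^{-(2^{i}+k)}\le 2^{-k}2^{-2^{M}}\sum_{i\ge M}2^{-(i-M)}=2a$, with the inequality strict because of the surplus in the tail. This proves (1).

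Statement (2) then follows formally. Symmetry is inherited from $r$, and $\yoinduce{r}{k}{m}(x,x)=0$ because $r(x,x)=0$ forces $\yobbb{m,r(x,x)}=[m,0)\cap\qq=\emptyset$; conversely (1) shows every off-diagonal value is positive, so the identity-of-indiscernibles axiom holds. For the triangle inequality I would argue exactly as in Proposition \ref{prop:triangular}: by (1) all nonzero values lie in the band $(a,2a)$, so for distinct $x,y,z$ one has $\yoinduce{r}{k}{m}(x,y)<2a=a+a<\yoinduce{r}{k}{m}(x,z)+\yoinduce{r}{k}{m}(z,y)$, while the degenerate cases are immediate. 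Finally, since every nonzero value exceeds $a>0$, the metric is uniformly discrete and hence generates the discrete topology of $\yoyi$, giving $\yoinduce{r}{k}{m}\in\met(\yoyi)$.

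For (3), take pairs $\{x,y\}\neq\{a',b'\}$ of distinct points and suppose for contradiction that $\yoinduce{r}{k}{m}(x,y)=\yoinduce{r}{k}{m}(a',b')$, that is, $\yoinpq{\yogeomq{k}}{\yobbb{m,r(x,y)}}=\yoinpq{\yogeomq{k}}{\yobbb{m,r(a',b')}}$. Both sets $\yobbb{m,r(x,y)}$ and $\yobbb{m,r(a',b')}$ are infinite (intervals of positive length intersected with $\qq$), and $\yonfunc_{k}$ is injective, so Proposition \ref{prop:base222} applies and yields $[m,r(x,y))\cap\qq=[m,r(a',b'))\cap\qq$; comparing the right endpoints of these half-open rational intervals forces $r(x,y)=r(a',b')$. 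As this common value lies in $S\subseteq(0,\infty)$ and $r$ is strongly rigid, we conclude $\{x,y\}=\{a',b'\}$, a contradiction. Hence distinct pairs receive distinct values and $\yoinduce{r}{k}{m}$ is strongly rigid.

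The main obstacle is the upper bound in (1): everything downstream is formal once the band $(a,2a)$ is in hand, since the triangle inequality of (2) reduces to the factor-two trick of Proposition \ref{prop:triangular} and the rigidity in (3) reduces, via Proposition \ref{prop:base222}, to the rigidity of $r$. The one place needing genuine care is therefore the summation estimate showing that the tail $\sum_{i\ge M}2^{-(2^{i}+k)}$ stays strictly below $2a$, which is exactly where the doubly-exponential growth built into $\yonfunc_{k}$ is exploited.
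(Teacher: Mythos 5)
Your proof is correct and follows essentially the same route as the paper: the band estimate $(\,\yogeomqt{k}{\yominq{m}},\,2\yogeomqt{k}{\yominq{m}}\,)$ via the property $\yomark$ and the geometric tail bound, the triangle inequality by the factor-two trick, and strong rigidity via Proposition \ref{prop:base222} and the rigidity of $r$. If anything, you are more careful than the paper about why both inequalities in \ref{item:210:1} are strict (infinitely many rationals in $[m,r(x,y))$ for the lower bound, the gaps in the range of $\yonfunc_{k}$ for the upper bound), which the paper's displayed computation leaves implicit even though the strict lower bound is what the triangle inequality actually uses.
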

 \begin{proof}
 We first prove the statement \ref{item:210:1}. 
 Take distinct $x, y\in \yoyi$. 
By the property $\yomark$, 
we have 
 $\yoqqq(\yominq{m})=m$ and 
 $m\in [m, d(x, y))\cap \qq=\yobbb{m, r(x, y)}$. 
 Thus we obtain
\[
\yogeomqt{k}{\yominq{m}}\le \yoinpq{\yogeomq{k}}{\yobbb{m, r(x, y)}}=\yoinduce{r}{k}{m}(x, y). 
\] 
Since $\yominq{m}$ is the minimal number of 
the set 
 $\yoqqq^{-1}([m, m+1)\cap \qq)$
 (see Definition \ref{df:propertyq}),
 and since $\yonfunc_{k}$ is strictly 
 increasing, 
 we have 
 \begin{align*}
 \yoinpq{\yogeomq{k}}
 {\yobbb{m, r(x, y)}}
&=
 \sum_{i\in \yoqqq^{-1}([m, m+1)\cap \qq)}2^{-\yonfunc_{k}(i)}
 \le 
 \sum_{\yonfunc_{k}(\yominq{m})\le i}2^{-i}
\\
& = 
 2\cdot 2^{-\yonfunc_{k}(\yominq{m})}
 =2\yogeomqt{k}{\yominq{m}}. 
 \end{align*}
Hence
 $\yoinpq{\yogeomq{k}}
 {\yobbb{m, r(x, y)}}\le 
 2\yogeomqt{k}{\yominq{m}}$. 
 This implies the statement 
 \ref{item:210:1}.
 
 We next prove the statement \ref{item:210:2}. 
 If $x, y\in \yoyi$
 satisfies  $x=y$, we have $\yobbb{m, r(x, y)}=\emptyset$. 
 Thus $\yoinduce{r}{k}{m}(x, y)=
 \yoinpq{\yogeomq{k}}{\yobbb{m, r(x, y)}}=0$.
To show that 
 $\yoinduce{r}{k}{m}$ satisfies the triangle inequality, 
 we take  distinct $x, y, z\in \yoyi$. 
According to  the statement \ref{item:210:1}, 
 we have 
 \[
 \yoinduce{r}{k}{m}(x, y)\le 2\yogeomqt{k}{\yominq{m}}
 =\yogeomqt{k}{\yominq{m}}+\yogeomqt{k}{\yominq{m}}
 <\yoinduce{r}{k}{m}(x, z)+\yoinduce{r}{k}{m}(z, y). 
 \]
 Thus, the function $\yoinduce{r}{k}{m}$ satisfies the 
 triangle inequality. 
According to  the statement \ref{item:210:1} again, 
the metric  $\yoinduce{r}{k}{m}$ is uniformly discrete, 
and hence  $\yoinduce{r}{k}{m}\in \met(\yoyi)$. 
 This finishes the proof of \ref{item:210:2}. 
 
 We shall prove the statement \ref{item:210:3}. 
 We assume that $x, y, u, v\in X$ satisfy
 $0<\yoinduce{r}{k}{m}(x, y)$ and 
 $\yoinduce{r}{k}{m}(x, y)=\yoinduce{r}{k}{m}(u, v)$. 
 Then we have 
 $\yoinpq{\yogeomq{k}}{\yobbb{m, r(x, y)}}
=\yoinpq{\yogeomq{k}}{\yobbb{m, r(u, v)}}$. 
By Proposition \ref{prop:base222}, 
we obtain 
$\yobbb{m, r(x, y)}=\yobbb{m, r(u, v)}$. 
Namely, we have 
$[m, r(x, y))\cap \qq=[m, r(u, v))\cap \qq$, 
and hence
$r(x, y)=r(u, v)$. 
Since $r$ is strongly rigid, 
we conclude that  $\{x, y\}=\{u, v\}$. 
This means that $d$ is strongly rigid. 
Then the statement  \ref{item:210:3} is true. 
This completes the proof of the lemma. 
 \end{proof}

\begin{df}\label{df:system}
Let $S$ be a dense subset of 
$[0, \infty)$. 
We say that 
a 
family $\{r_{i}\}_{i\in \zz_{\ge 0}}$  is 
an
\emph{$S$-gauge system on $\yoyi$}
if 
 each $r_{i}$ is a strongly rigid 
$(S\cap (i, i+1))$-semi-metric on $\yoyi$.
\end{df}

\begin{prop}\label{prop:existssys}
Let $S$ be a 
ubiquitously dense subset in $[0, \infty)$ with 
$\card(S)=\yocontinuum$. 
Then, there exists an 
$S$-gauge system 
$\{r_{i}\}_{i\in \zz_{\ge 0}}$
on $\yoyi$. 
\end{prop}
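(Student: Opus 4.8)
The plan is to build the family $\{r_{i}\}_{i\in\zz_{\ge 0}}$ coordinate by coordinate, producing each $r_{i}$ by a single application of Proposition \ref{prop:existssrsm} to the slice $S\cap(i, i+1)$. Since the definition of an $S$-gauge system (Definition \ref{df:system}) only asks that each $r_{i}$ be a strongly rigid $(S\cap(i, i+1))$-semi-metric on $\yoyi$, and these requirements are completely independent across $i$, it suffices to realize each one separately and then collect them. Note also that ubiquitous density of $S$ trivially implies density, so the standing hypothesis of Definition \ref{df:system} is met.

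First I would fix $i\in\zz_{\ge 0}$ and set $S_{i}=S\cap(i, i+1)$. The one thing that needs checking is that $S_{i}$ satisfies the hypotheses of Proposition \ref{prop:existssrsm}, namely that $S_{i}\subseteq(0, \infty)$ and $\card(S_{i})=\yocontinuum$. The inclusion is immediate because $i\ge 0$ gives $(i, i+1)\subseteq(0, \infty)$. For the cardinality, I would invoke ubiquitous density: the interval $(i, i+1)$ is a non-empty open subset of $[0, \infty)$, so Definition \ref{df:densedf} yields $\card(S_{i})=\card(S\cap(i, i+1))=\card(S)=\yocontinuum$.

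With these hypotheses verified, Proposition \ref{prop:existssrsm} applied to $S_{i}$ produces a strongly rigid $S_{i}$-semi-metric $r_{i}$ on $\yoyi$; by definition this is exactly a strongly rigid $(S\cap(i, i+1))$-semi-metric. Choosing one such $r_{i}$ for every $i\in\zz_{\ge 0}$ gives a family $\{r_{i}\}_{i\in\zz_{\ge 0}}$ that is an $S$-gauge system on $\yoyi$ in the sense of Definition \ref{df:system}.

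I do not anticipate a genuine obstacle here: the whole content lies in recognizing that ubiquitous density of $S$ transfers the full cardinality $\yocontinuum$ down to each unit-length slice $S\cap(i, i+1)$, which is precisely the input Proposition \ref{prop:existssrsm} requires. Everything else is a routine componentwise invocation of the earlier result.
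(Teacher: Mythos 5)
Your proposal is correct and follows essentially the same route as the paper: use ubiquitous density to get $\card(S\cap(i,i+1))=\yocontinuum$ for each $i$, then apply Proposition \ref{prop:existssrsm} slice by slice. No issues.
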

\begin{proof}
Since $S$ is ubiquitously dense, 
we have $\card(S\cap (i, i+1))=\yocontinuum$
for all $i\in \zz_{\ge 0}$. 
Then, 
by Proposition \ref{prop:existssrsm}, 
there exists a 
strongly rigid $(S\cap (i, i+1))$-semi-metric
$r_{i}$
on $\yoyi$. 
Thus, the sequence 
$\{r_{i}\}_{i\in\zz_{\ge 0}}$ is 
an $S$-gauge system
on $\yoyi$. 
\end{proof}

\begin{df}\label{df:Iset}
Fix $k\in \zz_{\ge 0}$. 
Let $S$ be a ubiquitously dense subset of $[0, \infty)$ and 
 $R=\{r_{i}\}_{i\in\zz_{\ge 0}}$ 
 an 
$S$-gauge system on $\yoyi$. 
In this case, 
$\yobbb{m, r_{m}(x_{m}, y_{m})}\cap \yobbb{m^{\prime}, r_{m^{\prime}}(x_{m^{\prime}}, y_{m^{\prime}})}=\emptyset$ for all 
distinct $m, m^{\prime}\in \zz_{\ge 0}$ and for all
$x=(x_{i})_{i\in \zz_{\ge 0}}$ and $y=(y_{i})_{i\in \zz_{\ge 0}}$ 
in 
$\yoyizz$. 
We define 
\[
\yoiii{R, x, y}=
\coprod_{m\in \zz_{\ge 0}}\yobbb{m, r_{m}(x_{m}, y_{m})}. 
\]
We also define a function 
$\yoinducet{R}{k}\colon
 \yoyizz\times \yoyizz
 \to [0, \infty)$ by 
\[
\yoinducet{R}{k}(x, y)=\yoinpq{\yogeomq{k}}{\yoiii{R, x, y}}. 
\]
\end{df}

Under the same assumptions
as in 
Definition \ref{df:Iset}, 
notice that we have 
\[
\yoinducet{R}{k}(x, y)=
\sum_{i=0}^{\infty}\yoinduce{r_{m}}{k}{m}(x_{m}, y_{m}), 
\]
where $x=(x_{i})_{i\in \zz_{\ge 0}}$ and 
$y=(y_{i})_{i\in \zz_{\ge 0}}$. 
Since $\yogeomq{k}$ is 
summable, we have 
$\yoinducet{R}{k}(x, y)<\infty$ for all 
$x, y\in \yoyizz$. 

\begin{lem}\label{lem:abab}
 Fix $k\in \zz_{\ge 0}$. 
 Let $S$ be a ubiquitously dense subset of $[0, \infty)$
 and 
 $R=\{r_{i}\}_{i\in \zz_{\ge 0}}$ 
 an
 $S$-gauge system on $\yoyi$. 
 Let $m\in \zz_{\ge 0}$
 and 
 $x=(x_{i})_{i\in \zz_{\ge 0}}, 
 y=(y_{i})_{i\in \zz_{\ge 0}}\in 
 \yoyizz$. 
Then the following statements 
hold:
\begin{enumerate}[label=\textup{(\Alph*)}]
\item\label{item:proda}
If $x, y\in X$ satisfy $\yoinducet{R}{k}(x, y)\le \yogeomqt{k}{\yominq{m}}$, 
then we have $x_{i}=y_{i}$ for all $i \in \{0, \dots, m\}$. 
\item\label{item:prodb}
If $x_{i}=y_{i}$ for all $i \in \{0, \dots, m\}$, 
then we have $\yoinducet{R}{k}(x, y)\le 4\yogeomqt{k}{\yominq{m+1}}$. 
\end{enumerate}

\end{lem}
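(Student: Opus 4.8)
The plan is to deduce both parts from the coordinatewise estimate recorded in Lemma \ref{lem:stst}\ref{item:210:1}, combined with the identity
\[
\yoinducet{R}{k}(x, y) = \sum_{i=0}^{\infty} \yoinduce{r_{i}}{k}{i}(x_{i}, y_{i})
\]
noted after Definition \ref{df:Iset}. The point is that each summand vanishes exactly when $x_{i}=y_{i}$, and for $x_{i}\neq y_{i}$ it lies strictly between $\yogeomqt{k}{\yominq{i}}$ and $2\yogeomqt{k}{\yominq{i}}$. I would also isolate at the outset the elementary monotonicity fact that, since $\yominq{\cdot}$ is strictly increasing (property $\yomark$) and $\yonfunc_{k}$ is strictly increasing, the sequence $\yogeomqt{k}{\yominq{i}}=2^{-\yonfunc_{k}(\yominq{i})}$ is strictly decreasing in $i$.

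For part \ref{item:proda} I would argue by contraposition. Suppose $x_{j}\neq y_{j}$ for some $j\in\{0,\dots,m\}$. As every summand is nonnegative, keeping only the $j$-th term gives $\yoinducet{R}{k}(x, y)\ge \yoinduce{r_{j}}{k}{j}(x_{j}, y_{j})>\yogeomqt{k}{\yominq{j}}$ by the lower bound in Lemma \ref{lem:stst}\ref{item:210:1}. Since $j\le m$, the monotonicity above yields $\yogeomqt{k}{\yominq{j}}\ge \yogeomqt{k}{\yominq{m}}$, whence $\yoinducet{R}{k}(x, y)>\yogeomqt{k}{\yominq{m}}$. This is exactly the contrapositive of the claim.

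For part \ref{item:prodb} the hypothesis $x_{i}=y_{i}$ for $i\le m$ annihilates the first $m+1$ summands, so applying the upper bound of Lemma \ref{lem:stst}\ref{item:210:1} to the remaining terms (the vanishing ones only help) gives
\[
\yoinducet{R}{k}(x, y)=\sum_{i=m+1}^{\infty}\yoinduce{r_{i}}{k}{i}(x_{i}, y_{i})\le 2\sum_{i=m+1}^{\infty}\yogeomqt{k}{\yominq{i}}=2^{1-k}\sum_{i=m+1}^{\infty}2^{-2^{\yominq{i}}}.
\]
It then suffices to prove the tail estimate $\sum_{i\ge m+1}2^{-2^{\yominq{i}}}\le 2\cdot 2^{-2^{\yominq{m+1}}}$, after which reinstating the prefactor $2^{1-k}$ produces $\yoinducet{R}{k}(x, y)\le 4\yogeomqt{k}{\yominq{m+1}}$, as desired.

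The main obstacle is this last tail estimate, where the doubly exponential growth of $\yonfunc_{k}$ is essential. Writing $a=\yominq{m+1}$ and using that $\yominq{\cdot}$ is a strictly increasing integer sequence (so $\yominq{m+1+j}\ge a+j$), I get $2^{\yominq{m+1+j}}\ge 2^{a}2^{j}\ge 2^{a}(j+1)$, hence
\[
\sum_{j\ge 0}2^{-2^{\yominq{m+1+j}}}\le \sum_{j\ge 0}2^{-2^{a}(j+1)}=\frac{2^{-2^{a}}}{1-2^{-2^{a}}}.
\]
Because $\yominq{m+1}\ge m+1\ge 1$ forces $2^{a}\ge 2$ and thus $1-2^{-2^{a}}\ge 3/4$, the right-hand side is at most $(4/3)\,2^{-2^{a}}<2\cdot 2^{-2^{\yominq{m+1}}}$, completing the estimate. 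Note that the strong rigidity of the semi-metrics $r_{i}$ plays no role in this lemma; only the two-sided coordinate bound of Lemma \ref{lem:stst}\ref{item:210:1} and the monotonicity of $\yominq{\cdot}$ and $\yonfunc_{k}$ are used.
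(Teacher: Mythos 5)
Your proof is correct and follows essentially the same route as the paper: part (A) via the lower bound of Lemma \ref{lem:stst}\ref{item:210:1} together with the monotonicity of $i\mapsto\yogeomqt{k}{\yominq{i}}$, and part (B) by summing the upper bounds over the tail $i\ge m+1$. The only difference is that you explicitly verify the tail estimate $\sum_{i\ge m+1}\yogeomqt{k}{\yominq{i}}\le 2\yogeomqt{k}{\yominq{m+1}}$, which the paper asserts without proof (it also follows at once from the fact that the exponents $\yonfunc_{k}(\yominq{i})$ form a strictly increasing sequence of integers).
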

\begin{proof}
We first prove \ref{item:proda}. 
For the sake of contradiction, 
we suppose that  there exists 
$i\in \{0, \dots, m\}$
such that 
$x_{i}\neq y_{i}$ and 
$\yoinducet{R}{k}(x, y)\le \yogeomqt{k}{\yominq{m}}$. 
Since $\yoqqq$ satisfies the 
property $\yomark$, 
we have $\yominq{i}\le \yominq{m}$, and hence 
$\yogeomqt{k}{\yominq{m}}\le \yogeomqt{k}{\yominq{i}}$. 
By $\yoinducet{R}{k}(x, y)=
\sum_{i=0}^{\infty}\yoinduce{r_{m}}{k}{m}(x_{m}, y_{m})$
and by $\yogeomqt{k}{\yominq{i}}< \yoinduce{r_{i}}{k}{i}(x_{i}, y_{i})$ (see the statement  \ref{item:210:1} in Lemma \ref{lem:stst}), 
we have $\yogeomqt{k}{\yominq{m}}<\yoinducet{R}{k}(x, y)$. 
This is a contradiction. 
Therefore the statement \ref{item:proda} is true. 

We next prove \ref{item:prodb}. 
If $x_{i}=y_{i}$ for all $i \in \{0, \dots, m\}$, 
we have $\yoinducet{R}{k}(x, y)=
\sum_{i=m+1}^{\infty}\yoinduce{r_{m}}{k}{m}(x_{m}, y_{m})$. 
According to the statement  \ref{item:210:1} in Lemma \ref{lem:stst}, 
for each $i\in\zz_{\ge m+1}$, 
we have 
$\yoinduce{r_{m}}{k}{m}(x_{m}, y_{m})<
2\yogeomqt{k}{\yominq{i}}$. 
Since  we have 
$\yoinducet{R}{k}(x, y)=
\sum_{i=0}^{\infty}\yoinduce{r_{m}}{k}{m}(x_{m}, y_{m})$, 
and 
since 
$\sum_{i=m+1}^{\infty}\yogeomqt{k}{\yominq{i}}\le 
2\yogeomqt{k}{\yominq{m+1}}$, 
we obtain 
$\yoinducet{R}{k}(x, y)
\le 4\yogeomqt{k}{\yominq{m+1}}$. 
This proves  the statement \ref{item:prodb}. 
\end{proof}

\begin{prop}\label{prop:compe}
 Fix $k\in \zz_{\ge 0}$. 
 Let $S$ be a ubiquitously dense subset of $[0, \infty)$
 and $R=\{r_{i}\}_{i\in \zz_{\ge 0}}$ 
 an
 $S$-gauge system on $\yoyi$. 
 Then the function 
 $\yoinducet{R}{k}\colon 
 \yoyizz\times \yoyizz 
 \to [0, \infty)$ satisfies the 
 following statements:
\begin{enumerate}[label=\textup{(\arabic*)}]
\item\label{item:212:1}
We have 
$\yoinducet{R}{k}\in \met\left(\yoyizz\right)$. 
\item\label{item:212:2}
We have 
$\yodiam_{\yoinducet{R}{k}}\left(\yoyizz\right)
\le 2\cdot 
2^{-\yonfunc_{k}(0)}(=2^{-k})$. 
\item\label{item:212:3}
The metric 
$\yoinducet{R}{k}$ is complete.  
\end{enumerate}
\end{prop}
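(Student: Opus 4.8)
The plan is to verify the three statements in order, drawing entirely on Lemmas \ref{lem:stst} and \ref{lem:abab} together with the coordinatewise representation $\yoinducet{R}{k}(x, y)=\sum_{m=0}^{\infty}\yoinduce{r_{m}}{k}{m}(x_{m}, y_{m})$ recorded just after Definition \ref{df:Iset}. The genuine content has already been packaged into Lemma \ref{lem:abab}, so each part should reduce to a short application of that estimate.

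For statement \ref{item:212:1}, I would first dispose of the metric axioms termwise. Symmetry of $\yoinducet{R}{k}$ follows from the symmetry of each semi-metric $r_{m}$, hence of each $\yoinduce{r_{m}}{k}{m}$. For the separation axiom, since every summand is nonnegative, $\yoinducet{R}{k}(x, y)=0$ forces $\yoinduce{r_{m}}{k}{m}(x_{m}, y_{m})=0$ for all $m$, which by Lemma \ref{lem:stst}\ref{item:210:2} gives $x_{m}=y_{m}$ for all $m$, i.e.\ $x=y$; the converse is immediate. The triangle inequality is obtained by summing the triangle inequalities for the metrics $\yoinduce{r_{m}}{k}{m}$ from Lemma \ref{lem:stst}\ref{item:210:2}. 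It then remains to check that $\yoinducet{R}{k}$ generates the product topology on $\yoyizz$, and this is exactly where Lemma \ref{lem:abab} does the work: part \ref{item:proda} shows that each ball $\{\, y\mid \yoinducet{R}{k}(x, y)\le \yogeomqt{k}{\yominq{m}}\, \}$ is contained in the cylinder $\{\, y\mid y_{i}=x_{i}\text{ for } i\le m\, \}$, so every cylinder is a $\yoinducet{R}{k}$-neighborhood; conversely, part \ref{item:prodb} shows each such cylinder is contained in the ball of radius $4\yogeomqt{k}{\yominq{m+1}}$, and since $\yogeomqt{k}{\yominq{m}}=2^{-\yonfunc_{k}(\yominq{m})}\to 0$ as $m\to\infty$, every metric ball contains a cylinder. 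The two neighborhood bases thus refine each other, giving $\yoinducet{R}{k}\in \met(\yoyizz)$.

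For statement \ref{item:212:2}, the key observation is that $\yoiii{R, x, y}\subseteq \qq_{\ge 0}$ for all $x, y$, because each $r_{m}$ takes values in $(m, m+1)$ and so $\yobbb{m, r_{m}(x_{m}, y_{m})}\subseteq [m, m+1)\cap \qq$. Hence $\yoinducet{R}{k}(x, y)\le \yoinpq{\yogeomq{k}}{\qq_{\ge 0}}=\sum_{i=0}^{\infty}2^{-\yonfunc_{k}(i)}$. Since $\yonfunc_{k}(i)-\yonfunc_{k}(0)=2^{i}-1\ge i$, one has $2^{-\yonfunc_{k}(i)}\le 2^{-\yonfunc_{k}(0)}2^{-i}$, and summing the geometric series yields $\yoinducet{R}{k}(x, y)\le 2\cdot 2^{-\yonfunc_{k}(0)}$, which is the claimed bound (and equals $2^{-k}$ since $\yonfunc_{k}(0)=1+k$).

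For statement \ref{item:212:3}, I would argue directly with Cauchy sequences, again via Lemma \ref{lem:abab}. Let $(x^{(n)})_{n}$ be $\yoinducet{R}{k}$-Cauchy. Fixing $m$ and using $\yogeomqt{k}{\yominq{m}}>0$, there is $N$ with $\yoinducet{R}{k}(x^{(n)}, x^{(n^{\prime})})\le \yogeomqt{k}{\yominq{m}}$ for $n, n^{\prime}\ge N$, so by part \ref{item:proda} the coordinate sequences $(x^{(n)}_{i})_{n}$ are eventually constant for each $i\le m$; as $m$ is arbitrary, every coordinate stabilizes and defines a point $x=(x_{i})_{i}\in\yoyizz$. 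To see $x^{(n)}\to x$, given $\varepsilon>0$ pick $m$ with $4\yogeomqt{k}{\yominq{m+1}}<\varepsilon$; for $n$ large enough that $x^{(n)}_{i}=x_{i}$ for all $i\le m$, part \ref{item:prodb} gives $\yoinducet{R}{k}(x^{(n)}, x)\le 4\yogeomqt{k}{\yominq{m+1}}<\varepsilon$. Thus every Cauchy sequence converges and $\yoinducet{R}{k}$ is complete. I expect the only real subtlety to lie in the topology identification in \ref{item:212:1}; the diameter and completeness claims are then short consequences of the same two estimates in Lemma \ref{lem:abab}.
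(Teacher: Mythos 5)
Your proof is correct and takes essentially the same route as the paper's: parts \ref{item:proda} and \ref{item:prodb} of Lemma \ref{lem:abab} for the topology identification and for completeness, and the bound $\yoinpq{\yogeomq{k}}{\qq_{\ge 0}}\le 2\cdot 2^{-\yonfunc_{k}(0)}$ for the diameter. You merely spell out the metric axioms and the convergence of the Cauchy limit in more detail than the paper does.
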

\begin{proof}
We first prove the statement \ref{item:212:1}. 
Since each $\yoinduce{r_{i}}{k}{i}$ generates the same topology on $\yoyi$, the 
statements \ref{item:proda} and 
\ref{item:prodb} in Lemma \ref{lem:abab}
imply that 
$\yoinducet{R}{k}$ generates the same topology on 
$\yoyizz$ (recall that $\yoyizz$ is equipped with the 
product topology). 

We next prove the statement \ref{item:212:2}. 
For all $x, y\in \yoyizz$, we have 
\[
\yoinducet{R}{k}(x, y)\le \yoinpq{\yogeomq{k}}{\qq_{\ge 0}}
=\sum_{i\in \zz_{\ge0 }}2^{-\yonfunc_{k}(i)}
\le \sum_{\yonfunc_{k}(0)\le j}2^{-j}=2\cdot 2^{-\yonfunc_{k}(0)}. 
\]
Since  $\yonfunc_{k}(0)=1+k$, 
and $\yoinducet{R}{k}(x, y)\le 2\cdot 2^{-\yonfunc_{k}(0)}$, 
 the statement \ref{item:212:2} is true. 

To prove the statement \ref{item:212:3}, 
we take an arbitrary  Cauchy sequence 
$\{a(n)\}_{n\in \zz_{\ge 0}}$ in 
$\yoyizz$. 
We put 
$a(n)=(a(n, i))_{i\in \zz_{\ge 0}}$
for all $i\in \zz_{\ge 0}$ and $n\in \zz_{\ge 0}$. 
Due to the statement  \ref{item:proda} in 
Lemma \ref{lem:abab}, 
for each $i\in \zz_{\ge 0}$, 
 there exists $N_{i}$ such that 
$a(n, i)=a(n+1, i)$  for all $n\in \zz_{\ge 0}$ with 
$N_{i}\le n$. 
We define   $b_{i}=a(N_{i}, i)$. 
Then the point 
$b=(b_{i})_{i\in \zz_{\ge 0}}\in \yoyizz$ is a
limit of $\{a(n)\}_{n\in \zz_{\ge 0}}$. 
Thus, the metric $\yoinducet{R}{k}$ is complete. 
This finishes the proof of the proposition. 
\end{proof}
\begin{rmk}
The metric 
$\yoinducet{R}{k}$ in 
Definition \ref{df:Iset} is not strongly rigid. 
Indeed, if we take $a, b\in \yoyi$ with $a\neq b$, 
and  define 
$x, y, u, v \in \yoyizz$ by 
$x=(a, a, a, \dots, )$, 
$y=(a, b, b, \dots, )$, 
$u=(b, a, a, \dots, )$, and 
$v=(b, b, b, \dots, )$, 
then 
we have $\{x, y\}\neq \{u, v\}$, and 
$\yoinducet{R}{k}(x, y)=\yoinducet{R}{k}(u, v)$
for a fixed integer $k\in \zz_{\ge 0}$, and for every $S$-gauge system $R$ on $\yoyi$. 
\end{rmk}

To obtain a strongly rigid metric 
on $\yoyizz$, 
we construct a topological embedding 
$\yoprism{\yosetf}\colon
 \yoyizz
\to \yoyizz$
such that 
$\yoprism{\yosetf}(x)$ contains 
information about  all finite prefixes of 
$x=(x_{i})_{i\in \zz_{\ge 0}}$ in $\yoyizz$. 

\begin{df}\label{df:prism}
For $n\in \zz_{\ge 0}$, and for a point 
$x=(x_{i})_{i\in \zz_{\ge 0}}\in \yoyizz$, 
we denote by 
$\yoproj{n}{x}\in \yoyi^{n+1}$ the point 
$(x_{0}, x_{1}, \dots, x_{n})$. 
For each $i\in \zz_{\ge 0}$, 
we take an injective map 
$f_{i}\colon \yoyi^{i+1}\to \yoyi$. 
Since 
$\card(\yoyi)=\yocontinuum$ and $\yocontinuum^{i+1}=\yocontinuum$, 
the injective map  $f_{i}\colon \yoyi^{i+1}\to \yoyi$ always exists. 
Put
$\yosetf=\{f_{i}\}_{i\in\zz_{\ge 0}}$. 
We define a
 map
$\yoprism{\yosetf}\colon
 \yoyizz
\to \yoyizz$ as follows: 
The $i$-th entry $y_{i}$ of $\yoprism{\yosetf}(x)$ is 
defined by 
\[
y_{i}=
\begin{cases}
x_{n} & \text{if $i=2n$};\\ 
f_{n}(\yoproj{n}{x})& \text{if $i=2n+1$, } 
\end{cases}
\]
where $x=(x_{i})_{i\in \zz_{\ge 0}}$. 
In what follows, 
we fix the family $\yosetf=\{f_{i}\}_{i\in \zz_{\ge 0}}$. 
\end{df}

\begin{lem}\label{lem:closedtopemb}
The map 
$\yoprism{\yosetf}\colon \yoyizz\to 
\yoyizz$ is 
a topological embedding and 
the image of $\yoprism{\yosetf}$ is 
closed in $\yoyizz$. 
\end{lem}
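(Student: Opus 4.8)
The plan is to deduce injectivity, the embedding property, and closedness of the image from the single structural observation that the even coordinates of $\yoprism{\yosetf}(x)$ reproduce $x$ while the odd coordinates are determined by the even ones, together with the discreteness of $\yoyi$ (which makes $\yoyizz$ a product of discrete factors and renders every map out of a finite power of $\yoyi$ continuous). First I would record that $\yoprism{\yosetf}$ is a continuous injection: writing $y=\yoprism{\yosetf}(x)$, the even entries satisfy $y_{2n}=x_n$, so $x$ is recovered from $y$ and the map is injective; and since the target carries the product topology, continuity follows by checking each coordinate --- the $2n$-th coordinate is the projection $\pi_{n}$, while the $(2n+1)$-th coordinate $x\mapsto f_{n}(\yoproj{n}{x})$ factors as the continuous finite projection $x\mapsto \yoproj{n}{x}\in \yoyi^{n+1}$ followed by $f_{n}$, the latter continuous because $\yoyi^{n+1}$ is discrete.

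Next I would obtain the embedding. Consider the map $g\colon \yoyizz\to \yoyizz$ given by $g(y)=(y_{0}, y_{2}, y_{4}, \dots)$; it is continuous, each of its coordinates being a projection, and by the relation $y_{2n}=x_{n}$ above it satisfies $g\circ \yoprism{\yosetf}=\mathrm{id}$, so on the image of $\yoprism{\yosetf}$ it coincides with $\yoprism{\yosetf}^{-1}$. Hence $\yoprism{\yosetf}^{-1}$ is continuous on the image, and $\yoprism{\yosetf}$ is a homeomorphism onto its image, that is, a topological embedding.

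Finally, and this is the main point, I would show the image is closed by characterizing it exactly. Since $x_{n}=y_{2n}$ is forced, a point $y$ lies in the image if and only if the consistency relation $y_{2n+1}=f_{n}(y_{0}, y_{2}, \dots, y_{2n})$ holds for every $n\in \zz_{\ge 0}$; thus the image equals $\bigcap_{n}C_{n}$ with $C_{n}=\{\, y\mid y_{2n+1}=f_{n}(y_{0}, y_{2}, \dots, y_{2n})\, \}$. Each $C_{n}$ is the preimage of the diagonal $\Delta\subset \yoyi\times \yoyi$ under the continuous map $y\mapsto (y_{2n+1}, f_{n}(y_{0}, y_{2}, \dots, y_{2n}))$, and $\Delta$ is closed because $\yoyi$ is discrete (hence Hausdorff); equivalently, $C_{n}$ is a union of basic cylinders and therefore clopen. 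Consequently the image is an intersection of closed sets, hence closed.

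The only mildly delicate step is the image characterization: one must verify that the odd coordinates of $\yoprism{\yosetf}(x)$ are completely determined by the even ones through the maps $f_{n}$, so that membership in the image is captured precisely by this countable family of relations. Once that is in place, closedness and bicontinuity are immediate consequences of the discreteness of $\yoyi$; in particular no use is made of the injectivity of the $f_{i}$ here, that hypothesis being needed only for the later rigidity arguments.
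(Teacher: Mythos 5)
Your proof is correct, and for the closedness part it takes a genuinely different (if closely related) route from the paper. The paper disposes of the embedding claim in one sentence and then proves closedness by a sequential argument: given a convergent sequence in the image with limit $a=(a_{i})_{i\in \zz_{\ge 0}}$, it sets $b_{i}=a_{2i}$ and checks $\yoprism{\yosetf}(b)=a$, which implicitly relies on $\yoyizz$ being metrizable (so that sequences detect closedness) and on the eventual stabilization of each coordinate of a convergent sequence in a product of discrete spaces. You instead characterize the image exactly as $\bigcap_{n}C_{n}$ with $C_{n}=\{\, y\mid y_{2n+1}=f_{n}(y_{0}, y_{2}, \dots, y_{2n})\, \}$ and observe that each $C_{n}$ is clopen; this makes explicit the consistency relations that the paper's choice $b_{i}=a_{2i}$ quietly exploits, avoids any appeal to first countability, and would work verbatim for an uncountable product. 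Your explicit continuous left inverse $g(y)=(y_{0},y_{2},\dots)$ for the bicontinuity, and your remark that injectivity of the $f_{i}$ is not needed here (it is used only later, in the strong-rigidity argument), are both accurate and slightly more detailed than what the paper records.
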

\begin{proof}
By the definition of $\yoprism{\yosetf}$, 
we observe that $\yoprism{\yosetf}$ is 
a topological embedding. 
To prove that 
$\yoprism{\yosetf}(\yoyizz)$ is closed, 
we take a sequence $\{x_{i}\}_{i\in \zz_{\ge 0}}$ 
in $\yoprism{\yosetf}(\yoyizz)$ such that 
$x_{i}\to a$ as $i\to \infty$ for some point 
$a=(a_{i})_{i\in \zz_{\ge 0}}\in \yoyizz$. 
We define $b=(b_{i})_{i\in \zz_{\ge 0}}\in \yoyizz$
by $b_{i}=a_{2i}$. 
Then $\yoprism{\yosetf}(b)=a$ and hence 
$a\in \yoprism{\yosetf}(\yoyizz)$. 
Thus, the set 
$\yoprism{\yosetf}(\yoyizz)$ is closed in $\yoyizz$. 
\end{proof}

Since for all 
$x=(x_{i})_{i\in \zz_{\ge 0}}, y=(y_{i})_{i\in \zz_{\ge 0}}\in \yoyizz$
we have 
$x=y$ if and only if 
$x_{i}=y_{i}$
for all 
$i\in \zz_{\ge 0}$, 
we obtain the following lemma:
\begin{lem}\label{lem:coherent}
Let 
$l\in \zz_{\ge 0}$
and 
$p(0), \dots p(l)\in \yoyizz$. 
If 
$p(0), \dots p(l)$ 
are mutually distinct, 
then 
there exists 
$N\in \zz_{\ge 0}$ such that 
for all 
$n\in \zz_{\ge 0}$ with $N<n$, the points 
$\yoproj{n}{p(0)}, \dots 
\yoproj{n}{p(l)}$ 
are mutually 
distinct. 
\end{lem}
As a consequence of 
Lemma \ref{lem:coherent}, 
we obtain: 
\begin{lem}\label{lem:coherent2}
Let 
$l\in \zz_{\ge 0}$
and 
$x(0), y(0), x(1), y(1), \dots, x(l), y(l)\in \yoyizz$. 
If 
$x(s)\neq y(s)$ 
for all 
$s\in \{0, \dots, l\}$, 
and 
if 
$\{x(s), y(s)\}\neq \{x(t), y(t)\}$ 
for all distinct 
$s,  t\in \{0, \dots, l\}$, 
then there exists $N$ 
such that for all 
$n\in \zz_{\ge 0}$ with $N<n$, 
\begin{enumerate}
\item 
we have 
$\{\yoproj{n}{x(s)}, \yoproj{n}{y(s)}\}\neq 
\{\yoproj{n}{x(t)}, \yoproj{n}{y(t)}\}$
for all distinct $s, t\in \{0, \dots, l\}$. 
\item we have $\yoproj{n}{x(s)}\neq 
\yoproj{n}{y(s)}$ for all $s\in \{0, \dots, l\}$. 
\end{enumerate}
\end{lem}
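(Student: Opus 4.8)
The plan is to reduce the desired statement for pairs directly to the corresponding statement for single points, which is exactly Lemma \ref{lem:coherent}. The key observation is that the projection map $x\mapsto \yoproj{n}{x}$ respects the formation of unordered pairs in a coherent way: once the projections separate enough individual points, they automatically separate pairs of points. So first I would collect the relevant collection of single points, then apply Lemma \ref{lem:coherent} to obtain a single threshold $N$ that works simultaneously for all of them.

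\begin{proof}
Consider the $(2l+2)$-many points $x(0), y(0), \dots, x(l), y(l)$ in $\yoyizz$. After deleting duplicates, list the distinct points among them; since $x(s)\neq y(s)$ for each $s$ and $\{x(s), y(s)\}\neq \{x(t), y(t)\}$ for distinct $s, t$, no two of the \emph{pairs} coincide, but individual points from different pairs may still coincide. Regardless, Lemma \ref{lem:coherent} applied to this finite list of mutually distinct points yields an $N\in \zz_{\ge 0}$ such that for all $n$ with $N<n$, the projections $\yoproj{n}{\cdot}$ of these distinct points are mutually distinct. I would take this $N$ as the desired threshold.

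Fix $n$ with $N<n$. For statement (2), if $x(s)\neq y(s)$, then $x(s)$ and $y(s)$ are distinct points in our list, so by the choice of $N$ we have $\yoproj{n}{x(s)}\neq \yoproj{n}{y(s)}$. For statement (1), suppose for contradiction that $\{\yoproj{n}{x(s)}, \yoproj{n}{y(s)}\}= \{\yoproj{n}{x(t)}, \yoproj{n}{y(t)}\}$ for some distinct $s, t$. Since $\yoproj{n}{\cdot}$ is injective on distinct points of the list, the equality of these unordered pairs forces the equality of the underlying unordered pairs $\{x(s), y(s)\}= \{x(t), y(t)\}$, contradicting the hypothesis. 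Hence statement (1) holds as well.
\end{proof}

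The only mild subtlety, and the step I would be most careful about, is the bookkeeping in the contradiction argument: I must ensure that injectivity of $\yoproj{n}{\cdot}$ on the relevant finite point set (guaranteed by Lemma \ref{lem:coherent}) genuinely lets me pass from equality of projected pairs back to equality of the original pairs, rather than merely matching up individual coordinates. This is clean precisely because the projections are mutually distinct, so the correspondence between the two two-element sets is forced entrywise and can be pulled back.
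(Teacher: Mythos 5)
Your proof is correct, but it takes a genuinely different route from the paper's. You apply Lemma \ref{lem:coherent} directly to the deduplicated collection of individual points $x(0), y(0), \dots, x(l), y(l)$ and then recover the statement about pairs from the injectivity of $\yoproj{n}{\cdot}$ on that finite set: a map injective on a set sends distinct two-element subsets to distinct images, so equality of the projected pairs pulls back to equality of the original pairs, contradicting the hypothesis; this step is sound, and the two-case matching of the unordered pairs (which you rightly flag) is forced once part (2) guarantees each projected pair has two elements. The paper instead encodes each pair $\{x(s), y(s)\}$ as two interleaved points $u(s), v(s)\in \yoyizz$ (with $u_{i}(s)=x_{j}(s)$ for $i=2j$ and $u_{i}(s)=y_{j}(s)$ for $i=2j+1$, and $v(s)$ the swap), checks that the hypotheses make $u(0), v(0), \dots, u(l), v(l)$ mutually distinct, applies Lemma \ref{lem:coherent} to these $2l+2$ points to get a threshold $M$, and then takes $N$ with $M\le 2N$. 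Your argument is the more elementary of the two, avoiding the auxiliary encoding and the reindexing between $m$ and $2n$; the paper's encoding shifts the same case analysis into the verification that distinct pairs yield four mutually distinct interleaved points, after which the conclusion is read off coordinatewise. Both arguments establish the lemma.
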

\begin{proof}
Put $x(s)=(x_{i}(s))_{i\in \zz_{\ge 0}}$ and 
$y(s)=(y_{i}(s))_{i\in \zz_{\ge 0}}$. 
For each $s \in \{0, \dots, l\}$,
we define $u(s)=(u_{i}(s))_{i\in \zz_{\ge 0}}$ 
and $v(s)=(v_{i}(s))_{i\in \zz_{\ge 0}}$ by 
\[
u_{i}(s)
=
\begin{cases}
x_{j}(s) & \text{if $i=2j$;}\\
y_{j}(s) & \text{if $i=2j+1$,}
\end{cases}
\]
and 
\[
v_{i}(s)
=
\begin{cases}
y_{j}(s) & \text{if $i=2j$;}\\
x_{j}(s) & \text{if $i=2j+1$.}
\end{cases}
\]
The inequality $x(s)\neq y(s)$ implies 
$u(s)\neq v(s)$. 
For all distinct $s, t\in \{0, \dots, l\}$,
by the assumption  that $\{x(s), y(s)\}\neq \{x(t), y(t)\}$, 
the points  $u(s), v(s), u(t), v(t)$
are mutually distinct. 
Therefore the points 
$u(0)$, $v(0)$, \dots, $u(l), v(l)$
are mutually distinct. 
According to  Lemma \ref{lem:coherent2}, 
there exists $M\in \zz_{\ge 0}$ such that 
for all $m\in \zz_{\ge0}$ with $M<m$, 
the points 
$\yoproj{m}{u(0)}, \yoproj{m}{v(0)}, 
\dots, \yoproj{m}{u(l)}, \yoproj{m}{v(l)}$
are mutually distinct. 
Take $N\in \zz_{\ge 0}$ with 
$M\le 2N$. 
Then, 
by the definitions of 
$u(s)$ and $v(s)$, 
the integer  $N$ satisfies the 
two conditions stated in the proposition. 
\end{proof}

Using the metric  $\yoinducet{R}{k}$ and 
the embedding $\yoprism{\yosetf}$, 
we construct a strongly rigid metric on 
$\yoyizz$. 
\begin{df}\label{df:inducemetric}
 Let $S$ be a subset of $[0, \infty)$
 and let 
$R=\{r_{i}\}_{i\in \zz_{\ge 0}}$ be an $S$-gauge system on 
$\yoyi$. 
We define 
\[
\yoinducett{R}{k}(x, y)=
\yoinducet{R}{k}(\yoprism{\yosetf}(x), \yoprism{\yosetf}(y)). 
\]
\end{df}
Note that we have 
\[
\yoinducett{R}{k}(x, y)=
\yoinpq{\yogeomq{k}}{
\yoiii{R, \yoprism{\yosetf}(x), \yoprism{\yosetf}(y)}}. 
\]

For a metric space $(X, d)$, 
and for a subset $A$ of $X$, 
we denote by 
$\yodiam_{d}(A)$ the diameter of $A$ with 
respect to $d$. 
The following theorem plays an 
important role to prove 
our main results. 

\begin{thm}\label{thm:5555}
 Fix $k\in \zz_{\ge 0}$. 
  Let $S$ be a 
   ubiquitously dense subset of $[0, \infty)$, 
  and 
 $R=\{r_{i}\}_{i\in \zz_{\ge 0}}$ 
 an $S$-gauge system on 
$\yoyi$. 
 Then 
 the metric  
 $\yoinducett{R}{k}\colon \yoyizz\times \yoyizz\to [0, \infty)$ satisfies  the following statements:
\begin{enumerate}[label=\textup{(\arabic*)}]
\item\label{item:216:1}
We have 
$\yoinducett{R}{k}\in \met\left(\yoyizz\right)$. 

\item\label{item:216:2}
We have 
$\yodiam_{\yoinducett{R}{k}}\left(\yoyizz\right)\le 2\cdot 
2^{-\yonfunc_{k}(0)}(=2^{-k})$. 
\item\label{item:216:3}
The metric $\yoinducett{R}{k}$ is complete. 

\item\label{item:216:4}
The metric $\yoinducett{R}{k}$ is strongly rigid. 
\end{enumerate}

\end{thm}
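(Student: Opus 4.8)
The plan is to obtain statements \ref{item:216:1}, \ref{item:216:2}, and \ref{item:216:3} essentially for free by transporting the corresponding properties of $\yoinducet{R}{k}$ through the embedding $\yoprism{\yosetf}$, and to concentrate the real work on the strong rigidity \ref{item:216:4}. By Definition \ref{df:inducemetric}, the map $\yoprism{\yosetf}\colon(\yoyizz, \yoinducett{R}{k})\to(\yoyizz, \yoinducet{R}{k})$ is by construction an isometry onto its image. Since $\yoprism{\yosetf}$ is an injective topological embedding (Lemma \ref{lem:closedtopemb}) and $\yoinducet{R}{k}\in\met(\yoyizz)$ (Proposition \ref{prop:compe}\ref{item:212:1}), the pullback $\yoinducett{R}{k}$ is a metric whose induced topology agrees with the product topology; this gives \ref{item:216:1}. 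Statement \ref{item:216:2} follows because $\yodiam_{\yoinducett{R}{k}}(\yoyizz)$ equals $\yodiam_{\yoinducet{R}{k}}(\yoprism{\yosetf}(\yoyizz))\le\yodiam_{\yoinducet{R}{k}}(\yoyizz)\le 2\cdot 2^{-\yonfunc_k(0)}$ by Proposition \ref{prop:compe}\ref{item:212:2}. For \ref{item:216:3}, the space $(\yoyizz, \yoinducett{R}{k})$ is isometric to the subspace $\yoprism{\yosetf}(\yoyizz)$ of the complete space $(\yoyizz, \yoinducet{R}{k})$ (Proposition \ref{prop:compe}\ref{item:212:3}); since this image is closed by Lemma \ref{lem:closedtopemb}, it is complete, and hence so is $\yoinducett{R}{k}$.

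The core is \ref{item:216:4}, the strong rigidity. Suppose $x, y, u, v\in\yoyizz$ satisfy $\yoinducett{R}{k}(x, y)=\yoinducett{R}{k}(u, v)\neq 0$, and write $X=\yoprism{\yosetf}(x)$ and likewise $Y, U, V$. Then $\yoinducet{R}{k}(X, Y)=\yoinducet{R}{k}(U, V)\neq 0$, so $X\neq Y$ and $U\neq V$, and by Definition \ref{df:Iset} the index sets $\yoiii{R, X, Y}$ and $\yoiii{R, U, V}$ are infinite. Applying Proposition \ref{prop:base222} to $\lambda=\yogeomq{k}$, whose exponent function $\yonfunc_k$ is injective, yields $\yoiii{R, X, Y}=\yoiii{R, U, V}$. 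Because the blocks $\yobbb{m, \cdot}$ lie in the pairwise disjoint intervals $[m, m+1)$, intersecting this equality with $[m, m+1)\cap\qq$ gives $\yobbb{m, r_m(X_m, Y_m)}=\yobbb{m, r_m(U_m, V_m)}$, hence $r_m(X_m, Y_m)=r_m(U_m, V_m)$ for every $m$. By the strong rigidity of each semi-metric $r_m$, whenever this common value is nonzero we obtain $\{X_m, Y_m\}=\{U_m, V_m\}$.

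It remains to upgrade these coordinatewise equalities to $\{x, y\}=\{u, v\}$, and this is the step where the embedding $\yoprism{\yosetf}$ does its work; I expect it to be the main obstacle, precisely because $\yoinducet{R}{k}$ alone is not strongly rigid (cf.\ the remark after Proposition \ref{prop:compe}). Since $x\neq y$, Lemma \ref{lem:coherent} supplies an $N$ with $\yoproj{n}{x}\neq\yoproj{n}{y}$ for all $n>N$. For such $n$ the odd coordinate $m=2n+1$ satisfies $X_m=f_n(\yoproj{n}{x})\neq f_n(\yoproj{n}{y})=Y_m$, so the common value $r_m(X_m,Y_m)$ is nonzero and $\{f_n(\yoproj{n}{x}), f_n(\yoproj{n}{y})\}=\{f_n(\yoproj{n}{u}), f_n(\yoproj{n}{v})\}$. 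As each $f_n$ is injective, this forces the equality of prefix pairs $\{\yoproj{n}{x}, \yoproj{n}{y}\}=\{\yoproj{n}{u}, \yoproj{n}{v}\}$, a genuine two-element set, for all $n>N$.

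Finally, I would show that the induced pairing is independent of $n$. Suppose $\yoproj{n}{x}=\yoproj{n}{u}$ for some $n>N$; if one had $\yoproj{n+1}{x}=\yoproj{n+1}{v}$ instead, then restricting the length-$(n+1)$ prefixes to length $n$ would give $\yoproj{n}{x}=\yoproj{n}{v}$, whence $\yoproj{n}{u}=\yoproj{n}{v}$, contradicting that the prefix pair at level $n$ has two elements. Thus the pairing is constant in $n$, so either $\yoproj{n}{x}=\yoproj{n}{u}$ for all $n>N$ or $\yoproj{n}{x}=\yoproj{n}{v}$ for all $n>N$. Since every coordinate is eventually captured by a prefix, letting $n\to\infty$ yields either $x=u$ and $y=v$, or $x=v$ and $y=u$; in both cases $\{x, y\}=\{u, v\}$, which establishes \ref{item:216:4} and completes the proof.
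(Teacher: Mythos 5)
Your proof is correct. Parts (1)--(3) are handled exactly as in the paper: transport the corresponding properties of $\yoinducet{R}{k}$ from Proposition \ref{prop:compe} through the closed isometric embedding $\yoprism{\yosetf}$ of Lemma \ref{lem:closedtopemb}. For the strong rigidity (4) you argue in the opposite logical direction from the paper. The paper proves the contrapositive: given $\{x,y\}\neq\{u,v\}$, Lemma \ref{lem:coherent2} produces a single level $n$ at which the prefix pairs differ, the injectivity of $f_{n}$ and the strong rigidity of $r_{2n+1}$ then force the index sets $\yoiii{R, \yoprism{\yosetf}(x), \yoprism{\yosetf}(y)}$ and $\yoiii{R, \yoprism{\yosetf}(u), \yoprism{\yosetf}(v)}$ to differ, and Proposition \ref{prop:base222} converts this into inequality of the two distances. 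You instead start from equality of the distances, apply Proposition \ref{prop:base222} to get equality of the full index sets, decompose blockwise to get $r_{m}=r_{m}$ in every coordinate, and recover equality of prefix pairs at every sufficiently large level. The price of the direct route is the extra step you correctly identify and supply: the pairing between $\{x,y\}$ and $\{u,v\}$ could a priori switch from level to level (this is precisely the failure mode that makes $\yoinducet{R}{k}$ itself non-rigid, per the remark after Proposition \ref{prop:compe}), and your nesting argument rules this out. The paper's contrapositive formulation sidesteps that consistency check entirely, needing only one witnessing coordinate, which is why its proof of (4) is shorter; your version makes more explicit how the prefix-encoding in the odd coordinates of $\yoprism{\yosetf}$ repairs the non-rigidity. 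Both arguments rest on the same three pillars: Proposition \ref{prop:base222}, the injectivity of the maps $f_{n}$, and the strong rigidity of the semi-metrics $r_{m}$.
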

\begin{proof}
Since $\yoprism{\yosetf}\colon \yoyizz\to 
\yoyizz$ is a 
topological embedding and its image is closed
(see Lemma \ref{lem:closedtopemb}), 
the statements \ref{item:212:1} and 
\ref{item:212:3} in Proposition \ref{prop:compe}
implies that  \ref{item:216:1} and 
\ref{item:216:3} in the theorem are true. 

By \ref{item:212:2}  in 
Proposition \ref{prop:compe}, 
the statement 
\ref{item:216:2} is true.

 We now prove the statement 
 \ref{item:216:4} in the theorem. 
Take $x, y, u, v\in \yoyizz$ such that
$x\neq y$, $u\neq v$, and 
$\{x, y\}\neq \{u, v\}$. 
Due to  Lemma \ref{lem:coherent2}, 
we can take $n\in \zz_{\ge 0}$ such that 
$\{\yoproj{n}{x}, \yoproj{n}{y}\}\neq 
\{\yoproj{u}{n}, \yoproj{n}{v}\}$.
Thus, 
using the injectivity of $f_{n}$, 
we have 
\[
\{f_{n}(\yoproj{n}{x}), f_{n}(\yoproj{n}{y})\}
\neq 
\{f_{n}(\yoproj{n}{u}), f_{n}(\yoproj{n}{v})\}.
\]
Then 
we have 
\[
r_{2n+1}(f_{n}(\yoproj{n}{x}), 
f_{n}(\yoproj{n}{y}))\neq 
r_{2n+1}(f_{n}(\yoproj{n}{u}), f_{n}(\yoproj{n}{v})).
\] 
This inequality implies that 
\[
\yobbb{2n+1, r_{2n+1}(f_{n}(\yoproj{n}{x}), f_{n}(\yoproj{n}{y}))}
\neq \yobbb{2n+1, r_{2n+1}(f_{n}(\yoproj{n}{u}), 
f_{n}(\yoproj{n}{v}))}.
\] 
Therefore,  we obtain 
\[
\yoiii{R, \yoprism{\yosetf}(x), \yoprism{\yosetf}(y)}\neq 
\yoiii{R, \yoprism{\yosetf}(x), \yoprism{\yosetf}(y)}. 
\]
According to  Proposition \ref{prop:base222}, 
we notice that
$\yoinducett{R}{k}(x, y)\neq \yoinducett{R}{k}(u, v)$. 
This means  that the metric 
$\yoinducett{R}{k}$ is strongly rigid, and 
 the statement \ref{item:216:4} is true. 
This completes the proof of the theorem. 
\end{proof}

\begin{df}
We denote by 
$\yosc$ the set $\yocontinuum\cup \{\yocontinuum\}$. 
This is nothing but  the successor of $\yocontinuum$ as an
ordinal. 
\end{df}

 \begin{lem}\label{lem:decodeco}
 There exists a family 
 $\{\yokset(\alpha)\}_{\alpha\in \yosc}$ 
 satisfying that 
 \begin{enumerate}[label=\textup{(\arabic*)}]
 \item 
 each $\yokset(\alpha)$ 
 is a subset of 
 $[0, \infty)$;
 \item 
each $\yokset(\alpha)$
  is ubiquitously dense $ in [0, \infty)$
  and $\card(\yokset(\alpha))=\yocontinuum$;
 \item 
 if $\alpha, \beta \in \yosc$
 satisfy $\alpha\neq \beta$, then 
 $\yokset(\alpha)\cap \yokset(\beta)=\emptyset$. 
 \end{enumerate}
 \end{lem}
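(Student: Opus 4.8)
The plan is to deduce this purely from the countable dense decomposition already provided by Theorem \ref{thm:densedecomposition}, by regrouping countably-infinite dense sets into $\yocontinuum$-many large blocks. First I would apply Theorem \ref{thm:densedecomposition} to the space $X=[0, \infty)$ together with the subset $S=[0, \infty)$ itself. Note that $X$ is separable metrizable with $1<\card(X)$, and that $S=X$ is ubiquitously dense in $X$: for every non-empty open $U\subset [0, \infty)$ the intersection $U\cap S=U$ contains a non-degenerate interval, so $\card(U\cap S)=\yocontinuum=\card(S)$. Hence $\kappa=\card(S)=\yocontinuum$, and the theorem yields a family $\{A(\alpha)\}_{\alpha<\yocontinuum}$ of pairwise disjoint, countable, dense subsets of $[0, \infty)$ with $\bigcup_{\alpha<\yocontinuum}A(\alpha)=[0, \infty)$.

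Next I would partition the index set $\yocontinuum$ into $\card(\yosc)$-many blocks, each of cardinality $\yocontinuum$. Since $\card(\yosc)=\yocontinuum$ and $\card(\yosc\times \yocontinuum)=\yocontinuum$, I can fix a bijection $b\colon \yocontinuum\to \yosc\times \yocontinuum$ and set $B(\beta)=\{\, \alpha<\yocontinuum\mid b(\alpha)\in \{\beta\}\times \yocontinuum\, \}$ for each $\beta\in \yosc$. Then $\{B(\beta)\}_{\beta\in \yosc}$ is a partition of $\yocontinuum$ with $\card(B(\beta))=\yocontinuum$ for all $\beta$. I then define the desired family by
\[
\yokset(\beta)=\bigcup_{\alpha\in B(\beta)}A(\alpha)\qquad (\beta\in \yosc).
\]

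It remains to verify the three required properties. Property (1) is immediate since each $A(\alpha)$ is a subset of $[0, \infty)$. Property (3) follows because distinct $\beta,\beta'$ give disjoint blocks $B(\beta)\cap B(\beta')=\emptyset$, and the $A(\alpha)$ are themselves pairwise disjoint, so $\yokset(\beta)\cap \yokset(\beta')=\emptyset$. The cardinality statement in (2) holds because $\yokset(\beta)$ is a union of $\yocontinuum$-many countable sets, whence $\card(\yokset(\beta))\le \yocontinuum\cdot \aleph_{0}=\yocontinuum$, with the reverse inequality coming from the density argument below applied to $U=[0, \infty)$. The crucial point is ubiquitous density: for a non-empty open $U\subset[0, \infty)$ and each $\alpha\in B(\beta)$, density of $A(\alpha)$ gives a point of $U\cap A(\alpha)$, and disjointness of the $A(\alpha)$ forces these $\card(B(\beta))=\yocontinuum$-many chosen points to be distinct; hence $\card(U\cap \yokset(\beta))\ge \yocontinuum$, and combined with $\card(U\cap \yokset(\beta))\le \card(\yokset(\beta))=\yocontinuum$ we obtain equality. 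This is the only step requiring care, and it rests entirely on the pairwise disjointness together with the simultaneous density of $\yocontinuum$-many of the pieces $A(\alpha)$; no new transfinite construction is needed beyond Theorem \ref{thm:densedecomposition}. (Alternatively, one could build the family directly by transfinite recursion, choosing distinct points in a fixed countable base of $[0, \infty)$ indexed by $\yosc\times \zz_{\ge 0}\times \yocontinuum$, but reusing Theorem \ref{thm:densedecomposition} is shorter.)
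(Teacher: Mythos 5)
Your proof is correct and follows essentially the same route as the paper: apply Theorem \ref{thm:densedecomposition} to $S=[0,\infty)$ inside $X=[0,\infty)$ to get $\yocontinuum$-many pairwise disjoint countable dense sets, then regroup them into $\card(\yosc)$-many blocks of size $\yocontinuum$ via a bijection with $\yosc\times\yocontinuum$ and take unions. Your explicit verification of ubiquitous density (distinct points from disjointness of the pieces) is a detail the paper leaves implicit, but the argument is the same.
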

 \begin{proof}
The set 
 $[0, \infty)$ 
 is ubiquitously dense 
 in $[0, \infty)$ and 
 $\card([0, \infty))=\yocontinuum$. 
 Thus, by
 Theorem \ref{thm:densedecomposition}, 
 we obtain a mutually disjoint decomposition 
 $\{T(\alpha)\}_{\alpha<\yocontinuum}$
 of $[0, \infty)$ such that 
 each $T(\alpha)$ is 
 countable and dense in $[0, \infty)$. 
 Since 
 $\card(\yosc)=\yocontinuum$, 
 and since $\yocontinuum\times \yocontinuum=\yocontinuum$ as 
  cardinals, 
 we can 
 take a bijection 
 $\phi\colon \yosc\to \yosc \times \yocontinuum$, 
 where 
 $\yosc \times \yocontinuum$
  stands for the product as sets.
 Put 
 $\phi(\alpha)=(\theta(\alpha), \lambda(\alpha))$. 
For each $\alpha\in \yosc$, 
 we define 
 $\yokset(\alpha)=\bigcup_{\beta\in \yosc,
  \theta(\beta)=\alpha}T(\beta)$. 
 Then the family 
 $\{\yokset(\alpha)\}_{\alpha\in \yosc}$
 satisfies the three conditions stated in the  lemma. 
 This finishes the proof. 
 \end{proof}

 \begin{df}\label{df:fandg}
 Fix $k\in \zz_{\ge 0}$. 
 In what follows,  in this paper, 
 we
 fix a family 
 $\{\yokset(\alpha)\}_{\alpha\in \yosc}$
 stated in Lemma \ref{lem:decodeco}. 
For each 
$\alpha\in \yosc$, 
we also fix 
a $\yokset(\alpha)$-gauge system 
$R(\alpha)=\{r_{i, \alpha}\}_{i\in \zz_{\ge 0}}$
(see Proposition \ref{prop:existssys}). 

 For each $\alpha\in \yosc$, 
we define 
 \[
 \yocset_{k, \alpha}=
 \{\, 
 \yoinducett{R(\alpha)}{k}(x, y)
 \mid x\neq y, x, y\in X\, \}, 
 \]
 and 
 \[
\yoccset_{k, \alpha}=\{0\}\sqcup \yocset_{k, \alpha}. 
 \]
Namely, the set 
$\yoccset_{k, \alpha}$
is the image of the metric 
$\yoinducett{R(\alpha)}{k}$ on $\yoyizz$. 
 \end{df}
 
In a similar way to the proof of  
\ref{item:216:4} in 
Theorem \ref{thm:5555}, 
we obtain 
Lemmas \ref{lem:eqeqeq} and 
\ref{lem:Findeqqq}. 
\begin{lem}\label{lem:eqeqeq}
 Fix $k\in \zz_{\ge 0}$. 
If $\alpha, \beta\in \yosc$
satisfy $\alpha\neq \beta$, 
then 
$\yocset_{k, \alpha}\cap \yocset_{k, \beta}=\emptyset$. 
 \end{lem}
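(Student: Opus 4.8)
The plan is to prove Lemma~\ref{lem:eqeqeq} by showing that if a value appears in both $\yocset_{k, \alpha}$ and $\yocset_{k, \beta}$, then the two contributing index sets coincide, yet these index sets draw their defining values from disjoint semi-metric images, forcing a contradiction. Concretely, suppose for contradiction that there exist distinct pairs $x\neq y$ and $u\neq v$ in $\yoyizz$ with $\yoinducett{R(\alpha)}{k}(x, y)=\yoinducett{R(\beta)}{k}(u, v)$. Unwinding the definitions through $\yoprism{\yosetf}$, this equality reads $\yoinpq{\yogeomq{k}}{\yoiii{R(\alpha), \yoprism{\yosetf}(x), \yoprism{\yosetf}(y)}}=\yoinpq{\yogeomq{k}}{\yoiii{R(\beta), \yoprism{\yosetf}(u), \yoprism{\yosetf}(v)}}$. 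Since both index sets are subsets of $\qq_{\ge 0}$ and the sequence $\yogeomq{k}$ is defined via the injective exponent map $\yonfunc_{k}$, Proposition~\ref{prop:base222} applies and yields the set equality
\[
\yoiii{R(\alpha), \yoprism{\yosetf}(x), \yoprism{\yosetf}(y)}=
\yoiii{R(\beta), \yoprism{\yosetf}(u), \yoprism{\yosetf}(v)}.
\]

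Next I would exploit the block structure of these index sets. Each $\yoiii{R, x, y}$ is the disjoint union over $m\in\zz_{\ge 0}$ of the rational intervals $\yobbb{m, r_{m}(x_{m}, y_{m})}=[m, r_{m}(x_{m}, y_{m}))\cap\qq$, and by the property $\yomark$ of $\yoqqq$ together with the fact that each $r_{m}$ takes values in $(m, m+1)$, the portion of the index set lying in the slot indexed by $m$ determines exactly the value $r_{m}(x_{m}, y_{m})$. So the displayed set equality forces, slot by slot, $r_{m, \alpha}(\cdot, \cdot)=r_{m, \beta}(\cdot, \cdot)$ for the relevant entries of $\yoprism{\yosetf}(x), \yoprism{\yosetf}(y)$ and $\yoprism{\yosetf}(u), \yoprism{\yosetf}(v)$. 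In particular, there is at least one slot $m$ in which the common value is a genuine positive real number (since $x\neq y$ guarantees a nonempty block), and that value lies simultaneously in the range of $r_{m, \alpha}$, hence in $\yokset(\alpha)$, and in the range of $r_{m, \beta}$, hence in $\yokset(\beta)$.

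Now I would invoke Lemma~\ref{lem:decodeco}(3): since $\alpha\neq\beta$, the sets $\yokset(\alpha)$ and $\yokset(\beta)$ are disjoint, so no positive real can belong to both. This contradicts the existence of such a shared slot value, completing the argument. I expect the main obstacle to be the bookkeeping in the middle step, namely verifying cleanly that the set equality of the $\yoiii{}$'s really does descend to equality of the individual semi-metric values $r_{m, \alpha}=r_{m, \beta}$ in each slot. This requires careful use of the property $\yomark$ to recover $m=\min$ of each block's $\yoqqq$-indices and to see that the blocks for distinct $m$ do not interfere (the disjointness noted in Definition~\ref{df:Iset}); once that decomposition is established, the disjointness of the gauge-system value ranges finishes everything immediately. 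Since the excerpt explicitly says this follows ``in a similar way to the proof of \ref{item:216:4} in Theorem~\ref{thm:5555},'' the expectation is that the slot-recovery mechanism is exactly the one already used there, so the proof should be short once that analogy is spelled out.
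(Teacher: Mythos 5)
Your proposal is correct and follows essentially the same route as the paper's proof: both arguments rest on Proposition~\ref{prop:base222}, the block structure of the index sets $\yoiii{R,\cdot,\cdot}$, and the disjointness of the sets $\yokset(\alpha)$ from Lemma~\ref{lem:decodeco}. The only difference is organizational — the paper argues in the contrapositive direction (distinct slot values $\Rightarrow$ distinct index sets $\Rightarrow$ distinct sums), while you run the same chain forward from the assumed equality of sums — which is logically the same argument.
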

\begin{proof}
For the sake of contradiction, 
we suppose that there exist $\alpha, \beta\in \yosc$
with $\alpha \neq \beta$ and $\yocset_{k, \alpha}\cap \yocset_{k, \beta}\neq\emptyset$. 
Take $d\in \yocset_{k, \alpha}\cap \yocset_{k, \beta}$
and put $d=\yoinducett{R(\alpha)}{k}(x, y)$ and 
$d=\yoinducett{R(\beta)}{k}(u, v)$, where $x, y, u, v\in \yoyizz$, 
and $x\neq y$ and $u\neq v$. 
Take $i\in \zz_{\ge 0}$ such that 
$x_{i}\neq y_{i}$. 
Then $r_{i, \alpha}(x_{i}, y_{i})\in K(\alpha)\setminus \{0\}$ and $r_{i, \beta}(u_{i}, v_{i})\in K(\beta)\cup \{0\}$. 
From $K(\alpha)\cap K(\beta)=\emptyset$, 
it follows that $r_{i, \alpha}(x_{i}, y_{i})\neq r_{i, \beta}(u_{i}, v_{i})$. 
This implies that 
$\yobbb{i, r_{i, \alpha}(x, y)}\neq \yobbb{i, r_{i, \beta}(u, v)}$ and hence 
$\yoiii{R(\alpha), \yoprism{\yosetf}(x), \yoprism{\yosetf}(y)}\neq \yoiii{R(\beta), \yoprism{\yosetf}(u), \yoprism{\yosetf}(v)}$. 
According to  Proposition \ref{prop:base222}, 
we obtain 
$\yoinducett{R(\alpha)}{k}(x, y)\neq \yoinducett{R(\beta)}{k}(u, v)$. 
This contradicts 
$d=\yoinducett{R(\alpha)}{k}(x, y)$ and 
$d=\yoinducett{R(\beta)}{k}(u, v)$. 
Therefore we conclude that 
$\yocset_{k, \alpha}\cap \yocset_{k, \beta}=\emptyset$
for all distinct $\alpha, \beta\in \yosc$. 
\end{proof}

A subset $S$ of $\rr$ is said to be 
\emph{linearly independent over $\qq$} if 
all finite subsets of  $S$ are linearly independent 
over $\qq$. 
 \begin{lem}\label{lem:Findeqqq}
 Fix $k\in \zz_{\ge 0}$. 
Let 
 $s, t\in \zz_{\ge 0}$. 
Let  
$\alpha_{0}, \dots, \alpha_{s}\in \yosc$. 
For each $i\in \{0, \dots, s\}$, 
we 
take $(t+1)$-many arbitrary distinct
elements  
$d_{i, 0}, \dots, d_{i, t}\in \yocset_{k, \alpha_{i}}$. 
Then the set 
\[
\{1\}\cup \{d_{i, j}\mid i\in \{0, \dots, s\}, 
j\in\{0, \dots, t\} \}
\]
is linearly independent over $\qq$. 
 \end{lem}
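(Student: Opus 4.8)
The plan is to reduce the assertion to Proposition \ref{prop:independent5}. Set $K=(s+1)(t+1)-1$, so that we must show the $K+1$ numbers $d_{i,j}$ together with $1$ are linearly independent over $\qq$. By the definition of $\yocset_{k,\alpha_i}$ and Theorem \ref{thm:5555}, each $d_{i,j}$ admits a representation $d_{i,j}=\yoinducett{R(\alpha_i)}{k}(x_{i,j},y_{i,j})=\yoinpq{\yogeomq{k}}{P_{i,j}}$ for some $x_{i,j}\neq y_{i,j}$ in $\yoyizz$, where $P_{i,j}=\yoiii{R(\alpha_i),\yoprism{\yosetf}(x_{i,j}),\yoprism{\yosetf}(y_{i,j})}$. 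Since $\yogeomqt{k}{i}=2^{-\yonfunc_{k}(i)}$ with $\yonfunc_{k}\in\yofastset$, the family $\{P_{i,j}\}$ together with $\lambda=\yogeomq{k}$ is precisely the input of Proposition \ref{prop:independent5}; the task is therefore to produce a subset $S\subseteq\qq_{\ge 0}$ and reals $a$ and $\{b_{i,j}\}$ verifying its hypotheses (I)--(III).

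The heart of the argument is to locate a single coordinate block in which all of the $P_{i,j}$ become simultaneously nested with pairwise distinct endpoints. I assume, as I may after grouping repeated indices, that $\alpha_0,\dots,\alpha_s$ are mutually distinct. Applying Lemma \ref{lem:coherent2} to the finitely many distinct pairs among the $\{x_{i,j},y_{i,j}\}$ yields an integer $N$ for which $\yoproj{N}{x_{i,j}}\neq\yoproj{N}{y_{i,j}}$ for every $(i,j)$, and $\{\yoproj{N}{x_{i,j}},\yoproj{N}{y_{i,j}}\}\neq\{\yoproj{N}{x_{i,j'}},\yoproj{N}{y_{i,j'}}\}$ whenever the two pairs differ. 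Put $m=2N+1$. By the definition of $\yoprism{\yosetf}$ the $m$-th coordinate of $\yoprism{\yosetf}(x)$ is $f_N(\yoproj{N}{x})$, so the injectivity of $f_N$ transports the above (in)equalities to the $f_N$-images, whence the block-$m$ threshold $c_{i,j}:=r_{m,\alpha_i}\bigl(f_N(\yoproj{N}{x_{i,j}}),f_N(\yoproj{N}{y_{i,j}})\bigr)$ is a nonzero element of $\yokset(\alpha_i)\cap(m,m+1)$. For $j\neq j'$ the separation just obtained, the injectivity of $f_N$, and the strong rigidity of the semi-metric $r_{m,\alpha_i}$ force $c_{i,j}\neq c_{i,j'}$; for $i\neq i'$ the disjointness $\yokset(\alpha_i)\cap\yokset(\alpha_{i'})=\emptyset$ (Lemma \ref{lem:decodeco}) forces $c_{i,j}\neq c_{i',j'}$ regardless of the underlying points. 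Thus all the $c_{i,j}$ are pairwise distinct.

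Finally I would take $S=[m,m+1)\cap\qq_{\ge 0}$, $a=m$, and $b_{i,j}=c_{i,j}$. Because the blocks constituting $P_{i,j}=\coprod_{m'}\yobbb{m',r_{m',\alpha_i}(\cdots)}$ lie in the disjoint intervals $[m',m'+1)$, only the block $m'=m$ meets $S$, giving $S\cap P_{i,j}=\yobbb{m,c_{i,j}}=[a,b_{i,j})\cap\qq_{\ge 0}$; together with $a<b_{i,j}$ (as $c_{i,j}\in(m,m+1)$) and the pairwise distinctness of the $b_{i,j}$, this verifies hypotheses (I)--(III). Proposition \ref{prop:independent5} then gives that $\yoinpq{\yogeomq{k}}{P_{i,j}}=d_{i,j}$, ranging over all $(i,j)$, together with $1$ are linearly independent over $\qq$, which is the claim. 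I expect the main obstacle to be the middle step: ensuring that a \emph{single} block $m$ can be chosen in which every threshold is at once nonzero and all thresholds are distinct, where the interplay of Lemma \ref{lem:coherent2}, the injectivity of the $f_N$, the strong rigidity of the semi-metrics, and the disjointness of the $\yokset(\alpha_i)$ is essential.
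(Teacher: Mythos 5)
Your proposal is correct and follows essentially the same route as the paper's proof: represent each $d_{i,j}$ via $\yoinducett{R(\alpha_i)}{k}$, use Lemma \ref{lem:coherent2} together with the injectivity of $f_n$ and the disjointness of the $\yokset(\alpha)$ to get pairwise distinct thresholds in the single block $[2n+1,2n+2)$, and then invoke Proposition \ref{prop:independent5} with $S=[2n+1,2n+2)\cap\qq$. Your explicit reduction to the case of mutually distinct $\alpha_i$ is a small point the paper leaves implicit, but otherwise the two arguments coincide.
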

\begin{proof}

Put $d_{i, j}=\yoinducett{R(\alpha_{i})}{k}(x(i, j), y(i, j))$, 
where $x(i, j), y(i, j)\in \yoyizz$. 
Note that 
$x(i, j)\neq y(i, j)$. 
According to 
 Lemma \ref{lem:coherent2}, 
 we
can
take a sufficient large integer  $n\in \zz_{\ge 0}$ such that 
for each 
$i\in \{0, \dots, s\}$, 
for all distinct $j, j^{\prime}\in \{0, \dots, t\}$, 
we obtain the inequality 
\[
\{\yoproj{n}{x(i, j)}, \yoproj{n}{y(i, j)}\}
\neq \{\yoproj{n}{x(i, j^{\prime}}, 
\yoproj{n}{y(i, j^{\prime})}\}. 
\]
We put 
$r(i, j)=r_{2n+1, \alpha_{i}}(f_{n}(\yoproj{n}{x(i, j)}), 
f_{n}(\yoproj{n}{y(i, j)}))$. 
Then, 
for a  fixed number $i\in \{0, \dots, s\}$, for all distinct $j, j^{\prime}\in \{0, \dots, t\}$, 
we have $r(i, j)\neq r(i, j^{\prime})$. 
Since $r(i, j)\in \yokset(\alpha_{i})$ for all 
$j\in \{0, \dots, t\}$, 
and since $\yokset(\alpha_{i})\cap \yokset(\alpha_{i^{\prime}})=\emptyset$
for all distinct $i, i^{\prime}\in \{0, \dots, s\}$, 
we obtain $r(i, j)\neq r(i^{\prime}, j^{\prime})$
for all distinct $(i, j)$, $(i^{\prime}, j^{\prime})$. 

We put $I_{i, j}=\yoiii{R(\alpha_{i}), \yoprism{\yosetf}(x(i, j)), \yoprism{\yosetf}(y(i, j))}$, 
and 
$S=[2n+1, 2n+2)\cap \qq$. 
Then $S\cap I_{i, j}=[2n+1, r(i, j))\cap \qq$. 
Thus, 
by $d_{i, j}=\yoinpq{\yogeomq{k}}{I_{i, j}}$, 
the numbers 
$d_{i, j}$ $(i\in \{0, \dots, s\}, j\in\{0, \dots, t\})$
satisfy the 
assumptions of Proposition 
\ref{prop:independent5}. 
Therefore, 
due to Proposition 
\ref{prop:independent5}, 
we conclude that 
the set 
\[
\{1\}\cup \{d_{i, j}\mid i\in \{0, \dots, s\}, 
j\in\{0, \dots, t\} \}
\]
is linearly independent over $\qq$. 
\end{proof}

 \begin{cor}\label{cor:Findeq}
 Fix $k\in \zz_{\ge 0}$. 
Then the set 
$\{1\}\cup \bigcup_{\alpha\in \yosc}
\yocset_{k, \alpha}$
is linearly independent over $\qq$.  
 \end{cor}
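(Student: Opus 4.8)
The plan is to reduce the statement about the whole (possibly uncountable) set to the finitary assertion already packaged in Lemma \ref{lem:Findeqqq}. By definition, a subset of $\rr$ is linearly independent over $\qq$ precisely when every collection of finitely many pairwise distinct elements is linearly independent over $\qq$. Hence it suffices to fix an arbitrary finite set $\Lambda$ of pairwise distinct elements of $\{1\}\cup\bigcup_{\alpha\in\yosc}\yocset_{k,\alpha}$ and to prove that $\Lambda$ is linearly independent over $\qq$.

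First I would sort the elements of $\Lambda$ by the block to which they belong. Every element of $\Lambda$ other than $1$ lies in $\bigcup_{\alpha\in\yosc}\yocset_{k,\alpha}$, and by Lemma \ref{lem:eqeqeq} the sets $\yocset_{k,\alpha}$ are pairwise disjoint, so each such element lies in a unique $\yocset_{k,\alpha}$. Since $\Lambda$ is finite, only finitely many indices occur; list them as distinct $\alpha_{0},\dots,\alpha_{s}\in\yosc$, and for each $i$ record the elements of $\Lambda$ that fall in $\yocset_{k,\alpha_{i}}$.

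The one technical point is that $\Lambda$ may contribute different numbers of elements from different blocks, whereas Lemma \ref{lem:Findeqqq} is stated for a uniform count $(t+1)$ per block. I would resolve this by padding. Choose $t$ so large that each block contributes at most $t+1$ elements of $\Lambda$. Each $\yocset_{k,\alpha_{i}}$ is infinite; indeed, since $\yoinducett{R(\alpha_{i})}{k}$ is a strongly rigid metric on $\yoyizz$ by \ref{item:216:4} of Theorem \ref{thm:5555}, distinct unordered pairs yield distinct distances, so $\card(\yocset_{k,\alpha_{i}})=\card([\yoyizz]^{2})=\yocontinuum$. Using this, I would enlarge the selection in each block to exactly $t+1$ pairwise distinct elements $d_{i,0},\dots,d_{i,t}\in\yocset_{k,\alpha_{i}}$ that include the original choices coming from $\Lambda$.

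Now Lemma \ref{lem:Findeqqq} applies verbatim to the data $\alpha_{0},\dots,\alpha_{s}$ and $\{d_{i,j}\}$, and yields that $\{1\}\cup\{\,d_{i,j}\mid i\in\{0,\dots,s\},\ j\in\{0,\dots,t\}\,\}$ is linearly independent over $\qq$. Since $\Lambda$ is a subset of this set (the element $1$, if it occurs in $\Lambda$, is present, and every other element of $\Lambda$ is one of the $d_{i,j}$), and any subset of a linearly independent set is again linearly independent, $\Lambda$ is linearly independent over $\qq$. As $\Lambda$ was an arbitrary finite set of pairwise distinct elements, the whole set $\{1\}\cup\bigcup_{\alpha\in\yosc}\yocset_{k,\alpha}$ is linearly independent over $\qq$, which is the assertion of Corollary \ref{cor:Findeq}. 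The only step requiring care is the padding, and it is elementary given the infinitude of each block.
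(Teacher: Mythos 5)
Your proof is correct and follows exactly the route the paper intends: the corollary is stated without proof as an immediate consequence of Lemma \ref{lem:Findeqqq}, and your reduction to finite subsets, sorting by block via the disjointness in Lemma \ref{lem:eqeqeq}, and padding each block to a uniform count $t+1$ (legitimate since each $\yocset_{k,\alpha}$ is infinite) is precisely the standard argument being suppressed.
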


 \begin{df}\label{df:Eset}
 Fix $k\in \zz_{\ge 0}$. 
 Recall that in Definition 
\ref{df:fandg}, 
we construct the family 
$\{\yocset_{k, \alpha}\}_{\alpha\in \yosc}$, 
where $\yosc=\yocontinuum\cup \{\yocontinuum\}$. 
 Using  $\yocontinuum\times \aleph_{0}=\yocontinuum$, 
 we can represent 
 \[
 \yocset_{k, \yocontinuum}=
 \left\{s_{k, \alpha, i}\mid 
 \alpha\in \yocontinuum, i\in \zz_{\ge 0}\right\}.
 \]
 We assume that  if 
 $s_{k, \alpha, i}=s_{k, \beta, j}$, 
then  $(\alpha, i)=(\beta, j)$. 
 For each 
 $(\alpha, i)\in \yocontinuum \times \zz_{\ge 0}$, 
 we take 
 $q_{k, \alpha, i}\in \qq_{>0}$
 such that 
 $q_{k, \alpha, i}\cdot s_{k, \alpha, i}\le 2^{-i}$. 
We fix  a bijection 
$P\colon \zz_{\ge 0}\to \qq_{\ge 0}$. 
We define set $\yobset_{k}$ and $\yoxset_{k}$ by 
\[
\yobset_{k}
=
\left\{\, 
P(i)+q_{k, \alpha, i}\cdot s_{k, \alpha, i}\mid \alpha<\yocontinuum, i\in \zz_{\ge 0}\, 
\right\},
\] 
and 
\[
\yoxset_{k}=\{0\}\sqcup \yobset_{k}, 
\]
respectively. 
In what follows, 
we no longer use the fact that 
the set $\yocset_{k, \yocontinuum}$ is
defined as a set of  values of a metric on $\yoyizz$. 
We rather use the property that 
the union of $\yocset_{k, \yocontinuum}$
and $\{1\}\cup \bigcup_{\alpha<\yocontinuum}\yocset_{k, \alpha}$ 
is linearly independent over $\qq$
(see Corollary \ref{cor:Findeq}). 
 \end{df}

 \begin{lem}\label{lem:eudense}
 Fix $k\in \zz_{\ge 0}$. 
 Then 
the set $\yoxset_{k}$ is 
 ubiquitously dense in 
 $[0, \infty)$ and 
 $\card(\yoxset_{k})=\yocontinuum$. 
 \end{lem}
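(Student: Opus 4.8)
The plan is to exploit the structure of $\yobset_{k}$ as a union, over $i\in\zz_{\ge 0}$, of the ``windows''
$W_{i}=\{\, P(i)+q_{k,\alpha,i}s_{k,\alpha,i}\mid \alpha<\yocontinuum\,\}$, each of which is squeezed into the half-open interval $(P(i), P(i)+2^{-i}]$ since $0<q_{k,\alpha,i}s_{k,\alpha,i}\le 2^{-i}$. Both assertions will follow once I know that each single $W_{i}$ already has cardinality $\yocontinuum$, and that for any prescribed nonempty open set at least one entire window is trapped inside it.

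First I would settle the cardinality. For a \emph{fixed} $i$, I claim the map $\alpha\mapsto P(i)+q_{k,\alpha,i}s_{k,\alpha,i}$ is injective on $\yocontinuum$. Indeed, if $\alpha\neq\beta$ then $(\alpha,i)\neq(\beta,i)$, so $s_{k,\alpha,i}\neq s_{k,\beta,i}$ by the injectivity of the indexing fixed in Definition \ref{df:Eset}; both numbers lie in $\yocset_{k,\yocontinuum}$, which by Corollary \ref{cor:Findeq} is linearly independent over $\qq$. Hence an equality $q_{k,\alpha,i}s_{k,\alpha,i}=q_{k,\beta,i}s_{k,\beta,i}$ would be a nontrivial rational relation between two distinct elements of that set, which is impossible. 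Thus $\card(W_{i})=\yocontinuum$, so $\card(\yobset_{k})\ge\yocontinuum$; the reverse inequality is immediate from the indexing by $\yocontinuum\times\zz_{\ge 0}$ together with $\yocontinuum\cdot\aleph_{0}=\yocontinuum$. Therefore $\card(\yoxset_{k})=\yocontinuum$.

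For ubiquitous density, let $U\subseteq[0,\infty)$ be nonempty and open, and fix an interval $(c,d)$ with $0\le c<d$ and $(c,d)\subseteq U$. Choose $N$ with $2^{-N}<(d-c)/2$. Since $P$ is a bijection onto $\qq_{\ge 0}$, infinitely many indices $i$ satisfy $P(i)\in(c,\,c+(d-c)/2)$, so I may pick one such $i$ with $i\ge N$. Then every element of $W_{i}$ lies in $(P(i), P(i)+2^{-i}]\subseteq(c,d)\subseteq U$, because $P(i)>c$ while $P(i)+2^{-i}<\bigl(c+(d-c)/2\bigr)+2^{-N}<d$. By the previous paragraph $W_{i}\subseteq U\cap\yobset_{k}$ has cardinality $\yocontinuum$, whence $\card(U\cap\yoxset_{k})=\yocontinuum=\card(\yoxset_{k})$, which is exactly ubiquitous density.

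The window-fitting density argument and the cardinal arithmetic are routine; the single step that carries all the weight is the injectivity of $\alpha\mapsto P(i)+q_{k,\alpha,i}s_{k,\alpha,i}$, which is precisely where Corollary \ref{cor:Findeq} is indispensable. Without the linear independence over $\qq$, distinct indices $\alpha$ could collapse to the same value and an individual window might shrink to a countable set, breaking the cardinality count and hence ubiquitous density.
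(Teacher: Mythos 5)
Your proof is correct and follows essentially the same route as the paper: each rational anchor $P(i)$ carries a window of points perturbed by at most $2^{-i}$, so some window fits entirely inside any given open set. You additionally verify, via Corollary \ref{cor:Findeq}, that $\alpha\mapsto P(i)+q_{k,\alpha,i}s_{k,\alpha,i}$ is injective so that each window has cardinality $\yocontinuum$ --- a detail the paper's proof leaves implicit but which is genuinely needed both for $\card(\yoxset_{k})=\yocontinuum$ and for the condition $\card(U\cap\yoxset_{k})=\card(\yoxset_{k})$ in the definition of ubiquitous density.
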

 \begin{proof}
 It suffices to show that 
 $\yobset_{k}$ is 
 ubiquitously dense in 
 $[0, \infty)$. 
 Take $x\in [0, \infty)$
 and $\epsilon\in (0, \infty)$. 
 Since $\qq_{\ge 0}$ is dense in 
 $[0, \infty)$, 
 we can take $n\in \zz_{\ge 0}$ such that 
 $|P(n)-x|<\epsilon/2$ 
 and $2^{-n}\le \epsilon/2$. 
 Then, for all $\alpha<\yocontinuum$, 
 we have 
 \[
 \left|
 P(n)+q_{k, \alpha, n}\cdot s_{k, \alpha, n}-x
 \right|\le 
 |P(n)-x|+
 \left|q_{k, \alpha, n}\cdot s_{k, \alpha, n}\right|<\epsilon/2+2^{-n}\le \epsilon.
 \] 
 Thus, the set 
 $\yobset_{k}$ is ubiquitously dense
 in $[0, \infty)$. 
 \end{proof}
 
 By 
 Lemma \ref{lem:eqeqeq} and Corollary  \ref{cor:Findeq}, 
 and by 
 the definitions of 
 $\yobset_{k}$ and 
 $\yocset_{k, \alpha}$, 
 we obtain:
 \begin{prop}\label{prop:indkbeta}
 Fix $k\in \zz_{\ge 0}$. 
Then the following statements are true: 
\begin{enumerate}[label=\textup{(\arabic*)}]
\item\label{item:417/1}
For all $\alpha\in \yosc$, 
we have $\yobset_{k}\cap \yocset_{k, \alpha}=\emptyset$;
\item\label{item:417/2}
The set 
$\yobset_{k}\cup \bigcup_{\alpha<\yocontinuum}
\yocset_{k, \alpha}$
is linearly independent over $\qq$.  
\end{enumerate}
 \end{prop}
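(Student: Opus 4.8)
The plan is to deduce both statements from Corollary \ref{cor:Findeq} together with Lemma \ref{lem:eqeqeq} and the injectivity of the indexing $(\alpha, i)\mapsto s_{k, \alpha, i}$, by treating every element of $\yobset_{k}$ as an explicit $\qq$-linear combination of $1$ and a \emph{single} element of $\yocset_{k, \yocontinuum}$. Throughout I write $b_{\alpha, i}=P(i)+q_{k, \alpha, i}s_{k, \alpha, i}$ for the generic element of $\yobset_{k}$, and I recall from Corollary \ref{cor:Findeq} that the set $\{1\}\cup\yocset_{k, \yocontinuum}\cup\bigcup_{\gamma<\yocontinuum}\yocset_{k, \gamma}$, which I denote by $B$, is linearly independent over $\qq$, while by Lemma \ref{lem:eqeqeq} the set $\yocset_{k, \yocontinuum}$ is disjoint from each $\yocset_{k, \gamma}$ with $\gamma<\yocontinuum$.

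For \ref{item:417/1} I would argue by contradiction: suppose $b_{\alpha, i}=c$ with $c\in\yocset_{k, \beta}$ for some $\beta\in\yosc$. This gives the $\qq$-relation $P(i)\cdot 1+q_{k, \alpha, i}\cdot s_{k, \alpha, i}-1\cdot c=0$. If $\beta<\yocontinuum$, then $c\in\bigcup_{\gamma<\yocontinuum}\yocset_{k, \gamma}$, so by Lemma \ref{lem:eqeqeq} $c\neq s_{k, \alpha, i}$; thus $1$, $s_{k, \alpha, i}$, $c$ are distinct members of $B$ and the relation is nontrivial (the coefficient of $c$ is $-1$), contradicting Corollary \ref{cor:Findeq}. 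If $\beta=\yocontinuum$, write $c=s_{k, \gamma, j}$; when $(\gamma, j)\neq(\alpha, i)$ injectivity gives $s_{k, \gamma, j}\neq s_{k, \alpha, i}$ and the same three-term argument applies. The only delicate case is $(\gamma, j)=(\alpha, i)$, where the relation collapses to $P(i)\cdot 1+(q_{k, \alpha, i}-1)s_{k, \alpha, i}=0$; since $\{1, s_{k, \alpha, i}\}\subset B$ is independent, this forces $P(i)=0$ and $q_{k, \alpha, i}=1$.

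That diagonal case is precisely the main obstacle. The bare definition of $\yobset_{k}$ permits $q_{k, \alpha, i}=1$, and if additionally $P(i)=0$ then $b_{\alpha, i}=s_{k, \alpha, i}\in\yocset_{k, \yocontinuum}$, which would actually break \ref{item:417/1}. I would remove it by sharpening the choice of the $q_{k, \alpha, i}$: the admissible set $\qq\cap(0, 2^{-i}/s_{k, \alpha, i}]$ has positive length and hence contains rationals different from $1$, so we may, and do, select $q_{k, \alpha, i}\neq 1$. With this harmless normalization the diagonal case is vacuous, completing \ref{item:417/1}.

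For \ref{item:417/2} I would take an arbitrary finite $\qq$-relation $\sum_{(\alpha, i)\in F}c_{\alpha, i}b_{\alpha, i}+\sum_{e\in G}d_{e}e=0$, where $F\subset\yocontinuum\times\zz_{\ge 0}$ consists of distinct pairs and $G\subset\bigcup_{\gamma<\yocontinuum}\yocset_{k, \gamma}$ is finite. Substituting $b_{\alpha, i}=P(i)+q_{k, \alpha, i}s_{k, \alpha, i}$ and collecting terms rewrites this as
\[
\Bigl(\sum_{(\alpha, i)\in F}c_{\alpha, i}P(i)\Bigr)\cdot 1+\sum_{(\alpha, i)\in F}c_{\alpha, i}q_{k, \alpha, i}\,s_{k, \alpha, i}+\sum_{e\in G}d_{e}\,e=0.
\]
By injectivity the $s_{k, \alpha, i}$ with $(\alpha, i)\in F$ are distinct elements of $\yocset_{k, \yocontinuum}$, and by Lemma \ref{lem:eqeqeq} they are distinct from the elements of $G$ and from $1$, so this is a relation among distinct members of the independent set $B$. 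Corollary \ref{cor:Findeq} then forces every coefficient to vanish; in particular $c_{\alpha, i}q_{k, \alpha, i}=0$ for each $(\alpha, i)\in F$, whence $c_{\alpha, i}=0$ because $q_{k, \alpha, i}>0$, and likewise $d_{e}=0$. Thus the original relation is trivial, establishing \ref{item:417/2}.
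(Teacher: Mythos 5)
Your proof is correct and follows essentially the route the paper intends: the paper offers no written argument for Proposition \ref{prop:indkbeta}, merely citing Lemma \ref{lem:eqeqeq}, Corollary \ref{cor:Findeq}, and the definitions, and these are exactly the ingredients you assemble (rewrite each element of $\yobset_{k}$ as a $\qq$-combination of $1$ and a single $s_{k, \alpha, i}\in \yocset_{k, \yocontinuum}$, then invoke independence of $\{1\}\cup\bigcup_{\alpha\in\yosc}\yocset_{k, \alpha}$ together with the disjointness from Lemma \ref{lem:eqeqeq} and the injectivity of $(\alpha, i)\mapsto s_{k, \alpha, i}$). Your observation about the diagonal case is a genuine point the paper glosses over: since $P$ is a bijection onto $\qq_{\ge 0}$ there is an $i_{0}$ with $P(i_{0})=0$, and nothing in Definition \ref{df:Eset} forbids $q_{k, \alpha, i_{0}}=1$ when $s_{k, \alpha, i_{0}}\le 2^{-i_{0}}$, in which case $s_{k, \alpha, i_{0}}\in\yobset_{k}$ and statement \ref{item:417/1} would fail for $\alpha=\yocontinuum$; your normalization $q_{k, \alpha, i}\neq 1$ is the right harmless fix (and, for what it is worth, only the case $\alpha<\yocontinuum$ of \ref{item:417/1} is actually used later, in the proof of Lemma \ref{lem:xyzabc}).
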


For two sets $A$ and $B$, 
we denote by 
$A\yominus B=(A\setminus B)\cup (B\setminus A)$. 
Namely, the set $A\yominus B$ is 
the symmetric difference of $A$ and $B$. 
\begin{lem}\label{lem:xyzabc}
Fix $k\in \zz_{\ge 0}$. 
Let  $\alpha, \beta, \yotil{\alpha}, 
\yotil{\beta} \in \yocontinuum$ with  
$\alpha\neq \beta$ and 
$\yotil{\alpha}\neq \yotil{\beta}$. 
If 
$x\in \yoccset_{k, \alpha}$, 
$a\in \yoccset_{k, \yotil{\alpha}}$
$y\in \yoccset_{k, \beta}$, 
$b\in \yoccset_{k, \yotil{\beta}}$
and 
$z, c\in \yoxset_{k}$. 
If 
$\{x, y, z\}\neq \{a, b, c\}$,  $x+y+z\neq 0$, 
and 
$a+b+c\neq 0$, 
then
the numbers 
$x+y+z$ and $a+b+c$
are linearly independent 
over $\qq$. 
\end{lem}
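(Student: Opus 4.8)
\emph{Proof proposal.} The plan is to prove the contrapositive. Two nonzero reals are linearly dependent over $\qq$ precisely when a nontrivial rational relation holds between them, so I would assume there are $p,q\in\qq$, not both zero, with $p(x+y+z)+q(a+b+c)=0$; since $x+y+z\neq 0$ and $a+b+c\neq 0$, both $p$ and $q$ are nonzero. The goal is then to deduce $\{x,y,z\}=\{a,b,c\}$, contradicting the hypothesis. The working tool is the set $\mathcal{B}=\{1\}\cup\bigcup_{\gamma\in\yosc}\yocset_{k,\gamma}$, which is linearly independent over $\qq$ by Corollary \ref{cor:Findeq}. Every term in play expands against $\mathcal{B}$: each of $x,y,a,b$ is either $0$ or a single basis vector in some $\yocset_{k,\gamma}$ with $\gamma<\yocontinuum$ (recall $x\in\yoccset_{k,\alpha}$, etc.), while $z,c\in\yoxset_k$ are either $0$ or, by Definition \ref{df:Eset}, of the shape $P(i)+q_{k,\gamma,i}s_{k,\gamma,i}$, that is, a rational multiple of $1$ plus a \emph{positive} rational multiple of a basis vector $s\in\yocset_{k,\yocontinuum}$. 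I would then read the relation off coordinate by coordinate in $\mathcal{B}$, using Lemma \ref{lem:eqeqeq} (pairwise disjointness of the $\yocset_{k,\gamma}$, $\gamma\in\yosc$) to know that distinct nonzero terms are genuinely distinct basis vectors.

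The heart of the argument is the $\yocset_{k,\yocontinuum}$-coordinates, which can come only from $z$ and $c$, since $x,y,a,b$ lie in the $\yocset_{k,\gamma}$ with $\gamma<\yocontinuum$, disjoint from $\yocset_{k,\yocontinuum}$. If $z\in\yobset_k$ but $c=0$, the basis vector from $z$ survives with coefficient $p\,q_z\neq 0$ and nothing cancels it, which is impossible; hence $z$ and $c$ are simultaneously $0$ or simultaneously in $\yobset_k$. In the latter case, writing their $\yocontinuum$-components as $q_z s_z$ and $q_c s_c$, vanishing of the $s_z$-coordinate forces $s_z=s_c$ (otherwise $p\,q_z$ survives). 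Now comes the crucial feature built into Definition \ref{df:Eset}: the indexing $(\gamma,i)\mapsto s_{k,\gamma,i}$ is a bijection onto $\yocset_{k,\yocontinuum}$, so an element of $\yobset_k$ is uniquely recovered from its $\yocset_{k,\yocontinuum}$-component; thus $s_z=s_c$ gives $z=c$, hence $q_z=q_c$, and the coordinate equation collapses to $q_z(p+q)=0$, i.e.\ $p=-q$. I expect this to be the main obstacle and the one place the construction is genuinely used: a single coordinate must yield \emph{both} $z=c$ and $p=-q$, and it is exactly the bijectivity packed into $\yobset_k$ that makes this work.

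It remains to finish. With $z=c$ and $p=-q$, dividing by $p$ turns the relation into $(x+y+z)-(a+b+z)=0$, i.e.\ $x+y=a+b$. In the remaining case $z=c=0$ I would derive $p=-q$ directly from the $\bigcup_{\gamma<\yocontinuum}\yocset_{k,\gamma}$-coordinates: for a nonzero $x$, its coefficient in $p(x+y)+q(a+b)=0$ equals $p$ plus $q$ times the number of elements of $\{a,b\}$ equal to $x$, which is $p$ or $p+q$, and since $p\neq 0$ this can vanish only if $x\in\{a,b\}$ and $p+q=0$ (such a nonzero $x$ or $y$ exists because $x+y\neq 0$); so again $x+y=a+b$. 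Thus in all cases I reduce to the elementary claim that $x+y=a+b$ forces $\{x,y\}=\{a,b\}$ as sets. This follows by the same coordinate comparison: for nonzero $x$ the net coefficient is $1$ minus the number of elements of $\{a,b\}$ equal to $x$, and since $a\neq b$ whenever both are nonzero (Lemma \ref{lem:eqeqeq}) at most one such coincidence occurs, so vanishing forces $x\in\{a,b\}$; running this over $x,y,a,b$ gives $\{x,y\}\setminus\{0\}=\{a,b\}\setminus\{0\}$, and counting nonzero entries (using $x\neq y$ and $a\neq b$ when nonzero) upgrades it to $\{x,y\}=\{a,b\}$. Finally $\{x,y,z\}=\{x,y\}\cup\{z\}=\{a,b\}\cup\{c\}=\{a,b,c\}$, the desired contradiction. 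The only real bookkeeping throughout is tracking which of $x,y,z,a,b,c$ vanish, which is why the final matching is stated as an equality of sets rather than of tuples.
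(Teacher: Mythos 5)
Your argument is correct, but it takes a genuinely different route from the paper's. The paper works directly with the six numbers as atomic vectors: it first proves a small claim that the symmetric difference $\{x,y,z\}\yominus\{a,b,c\}$ contains a nonzero element, then groups the two sums into $u=\sum_{l\in A\setminus B}l$, $v=\sum_{l\in B\setminus A}l$, $w=\sum_{l\in A\cap B}l$ and invokes statement \ref{item:417/2} of Proposition \ref{prop:indkbeta} (linear independence of $\yobset_{k}\cup\bigcup_{\gamma<\yocontinuum}\yocset_{k,\gamma}$), so that the witness element forces $h_{0}=0$ and then $h_{1}=0$. You instead argue contrapositively, expand everything in the finer independent set $\{1\}\cup\bigcup_{\gamma\in\yosc}\yocset_{k,\gamma}$ of Corollary \ref{cor:Findeq}, splitting each element of $\yobset_{k}$ into its rational part and its $\yocset_{k,\yocontinuum}$-component, and show that a nontrivial relation forces $z=c$, $p=-q$, and $\{x,y\}=\{a,b\}$, hence $\{x,y,z\}=\{a,b,c\}$. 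In doing so you never use Proposition \ref{prop:indkbeta}\ref{item:417/2} or the symmetric-difference claim; what you use instead is the injectivity of $(\gamma,i)\mapsto s_{k,\gamma,i}$ assumed in Definition \ref{df:Eset} (together with the independence of $\{1\}\cup\yocset_{k,\yocontinuum}$, which makes the $s$-component of an element of $\yobset_{k}$ well defined), i.e.\ you re-derive inline exactly the fact that underlies \ref{item:417/2}. The paper's proof is shorter once Proposition \ref{prop:indkbeta} is in hand; yours is longer but more informative, since it identifies set equality as the only obstruction to independence and pinpoints where the construction of $\yobset_{k}$ is really used. The case bookkeeping (which of the six numbers vanish; $x\neq y$ and $a\neq b$ when both are nonzero, via Lemma \ref{lem:eqeqeq}) all checks out.
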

\begin{proof}
We first prove the following claim:
\begin{itemize}
\item There exists non-zero $r$ such that 
$r\in \{x, y, z\}\yominus\{a, b, c\}$.
\end{itemize}
By $\{x, y, z\}\neq \{a, b, c\}$, 
we observe that 
$\{x, y, z\}\yominus\{a, b, c\}\neq \emptyset$.
If $0\not \in \{x, y, z\}\yominus\{a, b, c\}$, 
then any element in this set satisfies the condition. 
If $0\in \{x, y, z\}\yominus\{a, b, c\}$, 
then $0\in \{x, y, z\}$ or $0\in \{a, b, c\}$.
We may assume that 
$0\in \{a, b, c\}$. 
Thus, $0\not\in \{x, y, z\}$. 
Combining 
Lemma \ref{lem:eqeqeq}
and the statement 
\ref{item:417/1} in Proposition \ref{prop:indkbeta}, 
we obtain 
$\yocset_{k, \alpha}\cap \yocset_{k, \beta}
=\yocset_{k, \alpha}\cap \yobset_{k}
=\yocset_{k, \beta}\cap \yobset_{k}=\emptyset$. 
Since $x\in \yocset_{k, \alpha}$, 
$y\in \yocset_{k, \beta}$, and 
$z\in \yobset_{k}$, 
we conclude that $\card(\{x, y, z\})=3$. 
Since $\{a, b, c\}$ contains at 
most two non-zero numbers, 
and since all the three numbers $x, y, z$ are non-zero, 
there exists non-zero 
$r\in \{x, y, z\}\yominus\{a, b, c\}$. 
This finishes the proof of the claim. 

To prove the linear independence
of $x+y+z$ and $a+b+c$ over $\qq$, 
we assume that integers 
$h_{0}$ and $h_{1}$ satisfy
\begin{align}\label{al:c0c1}
h_{0}(x+y+z)+h_{1}(a+b+c)=0. 
\end{align}
We
put $A=\{x, y, z\}\cap (0, \infty)$ and 
$B=\{a, b, c\}\cap (0, \infty)$.
We also put 
$u=\sum_{l\in A\setminus B}l$, 
$v=\sum_{l\in B\setminus A}l$, 
and $w=\sum_{l\in A\cap B}l$. 
Then the equality  \eqref{al:c0c1} implies that
\begin{align}\label{al:daidai}
h_{0}u+
h_{1}v
+(h_{0}+h_{1})w
=0. 
\end{align}
Observe 
 that it can happen that some of $u, v, w$ are $0$. 
Using  the claim explained above, 
we have $u\neq 0$ or $v\neq 0$. 
We may assume that $u\neq 0$. 
Put $C=\{u, v, w\}\cap (0, \infty)$. 
Then $u\in C$. 
According to  Proposition \ref{prop:indkbeta}, 
the set $A\cup B$ is linearly independent over $\qq$. 
Since $A\setminus B$, 
$B\setminus A$, and $A\cap B$
are mutually disjoint subsets of $A\cup B$, 
by the definitions of $u$, $v$,  and $w$, 
the set $C$ 
is linearly independent over $\qq$. 
Thus,
from \eqref{al:daidai} and 
and $u\neq 0$, 
it follows that  $h_{0}=0$. 
The equality \eqref{al:c0c1} implies 
 $h_{1}(a+b+c)=0$. 
Since $a+b+c\neq 0$, 
we obtain 
$h_{1}=0$. 
Thus, we conclude that  $h_{0}=h_{1}=0$. 
Hence $x+y+z$ and 
$a+b+c$ are linearly independent 
over $\qq$. 
\end{proof}

\begin{lem}\label{lem:approxEk}
Fix $k\in \zz_{\ge 0}$. 
Let $X$ be 
a discrete space with 
$\card(X)\le \yocontinuum$. 
Let $d\in \met(X)$
and $\epsilon\in (0, \infty)$. 
Then there exists a strongly rigid
uniformly discrete metric $e\in \met(X; \yoxset_{k})$ with 
$\metdis_{X}(d, e)\le \epsilon$. 
\end{lem}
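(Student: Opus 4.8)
The plan is to obtain $e$ as a direct application of Theorem \ref{thm:approxcontinuum} with the value set taken to be $S = \yoxset_{k}$. To do this, I first need to check that $\yoxset_{k}$ satisfies the hypotheses imposed on $S$ in that theorem, namely that $\yoxset_{k}$ is a ubiquitously dense subset of $[0, \infty)$, that $0 \in \yoxset_{k}$, and that $\card(X) \le \card(\yoxset_{k})$.

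These verifications are all immediate from what has already been established. By construction $\yoxset_{k} = \{0\} \sqcup \yobset_{k}$, so $0 \in \yoxset_{k}$; and Lemma \ref{lem:eudense} tells us that $\yoxset_{k}$ is ubiquitously dense in $[0, \infty)$ with $\card(\yoxset_{k}) = \yocontinuum$. Since the hypothesis of the present lemma gives $\card(X) \le \yocontinuum = \card(\yoxset_{k})$, the cardinality requirement is satisfied as well. Thus $\yoxset_{k}$ is an admissible value set for Theorem \ref{thm:approxcontinuum}.

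With these checks in place, I would apply Theorem \ref{thm:approxcontinuum} to the discrete space $X$, the metric $d$, the value set $\yoxset_{k}$, and the given $\epsilon$. This produces a metric $e \in \met(X; \yoxset_{k})$ satisfying conditions \ref{item:24:1}--\ref{item24:4} of that theorem. In particular, condition \ref{item:24:1} yields $\metdis_{X}(d, e) \le \epsilon$, condition \ref{item:24:2} yields uniform discreteness, and condition \ref{item24:4} yields strong rigidity. Hence $e$ is exactly the metric demanded by the lemma, and no further argument is required.

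Because the whole statement reduces to a single invocation of Theorem \ref{thm:approxcontinuum} once the admissibility of the value set is confirmed, there is no genuine obstacle here; the only content is the observation that $\yoxset_{k}$ is ubiquitously dense of the correct cardinality, which was precisely the point of Lemma \ref{lem:eudense}. I note that the heavier machinery built in the linear-independence results (Corollary \ref{cor:Findeq} and Lemma \ref{lem:xyzabc}) is \emph{not} used for this lemma; it will instead enter later, when strong rigidity is upgraded to the stronger $\qq$-linear-independence property defining $\yoipt(X)$ in the proof of Theorem \ref{thm:main1}.
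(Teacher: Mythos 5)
Your proposal is correct and is essentially identical to the paper's own proof, which likewise obtains the lemma by combining Lemma \ref{lem:eudense} with Theorem \ref{thm:approxcontinuum} applied to $S=\yoxset_{k}$. The explicit verification that $0\in\yoxset_{k}$ and that $\card(X)\le\card(\yoxset_{k})$ is a welcome addition but changes nothing in substance.
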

\begin{proof}
Combining  Lemma \ref{lem:eudense} and 
Theorem \ref{thm:approxcontinuum}, 
we obtain  the lemma. 
\end{proof}

The following is deduced from 
\cite[Theorem 3.1, Chapter 7]{MR0394604}. 
The latter part is deduced from 
\cite[Theorem 2]{MR152457}.

\begin{thm}\label{thm:bc}
If  $X$ is  a strongly $0$-dimensional 
metrizable space 
with $\card(X)\le \yocontinuum$, 
then  $X$ can be topologically embedded into 
$\yoyizz$. 
Moreover, if $X$ is completely metrizable, 
the space 
$X$ is homeomorphic to a 
closed subset of $\yoyizz$. 
\end{thm}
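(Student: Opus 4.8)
The plan is to realize the embedding as an inverse limit of clopen partitions; this is the standard route, and the cited references \cite{MR0394604} and \cite{MR152457} yield the same conclusions, but I sketch a self-contained construction. Fix a compatible metric $\rho$ on $X$ (a complete one in the completely metrizable case). The one input I need from strong $0$-dimensionality is: for every open cover $\mathcal{U}$ of $X$ there is a partition $\mathcal{P}$ of $X$ into nonempty clopen sets refining $\mathcal{U}$. I would obtain this by taking a $\sigma$-discrete open refinement $\bigcup_{k}\mathcal{W}_{k}$ of $\mathcal{U}$ (metrizability and paracompactness), replacing its members by clopen sets (the clopen sets form a base since $X$ is strongly $0$-dimensional), noting that a discrete family of clopen sets is pairwise disjoint with clopen union $G_{k}=\bigcup\mathcal{W}_{k}$, and finally disjointifying across the countably many levels via the clopen differences $H_{k}=G_{k}\setminus(G_{0}\cup\dots\cup G_{k-1})$. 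Since $\mathcal{P}$ is a partition of $X$ into nonempty sets, $\card(\mathcal{P})\le \card(X)\le \yocontinuum=\card(\yoyi)$.

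Applying this to the covers by $\rho$-balls of radius $2^{-n}$ and repeatedly taking common refinements (the common refinement $\{P\cap Q\}$ of two clopen partitions is again one, of cardinality $\le\yocontinuum$), I would build clopen partitions $\mathcal{P}_{0},\mathcal{P}_{1},\dots$ of $X$ into nonempty sets such that $\mathcal{P}_{n+1}$ refines $\mathcal{P}_{n}$ and the $\rho$-mesh of $\mathcal{P}_{n}$ is less than $2^{-n}$. For each $n$ fix an injection of $\mathcal{P}_{n}$ into $\yoyi$ and let $c_{n}(x)\in\yoyi$ be the label of the unique member $P_{n}(x)\in\mathcal{P}_{n}$ with $x\in P_{n}(x)$. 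Define $\phi\colon X\to\yoyizz$ by $\phi(x)=(c_{n}(x))_{n\in\zz_{\ge 0}}$. Then $\phi$ is continuous (the preimage of the basic clopen set $\{a:a_{n}=t\}$ is the clopen member $P_{n,t}$), injective (if $x\ne y$ then $x,y$ lie in different members of $\mathcal{P}_{n}$ once the mesh drops below $\rho(x,y)$), and open onto its image (each member $P_{n}(x)$ has $\rho$-diameter $<2^{-n}$, hence is a neighbourhood of $x$, and $\phi(P_{n}(x))=\phi(X)\cap\{a:a_{n}=c_{n}(x)\}$ is relatively open in $\phi(X)$). Thus $\phi$ is a topological embedding, proving the first assertion.

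For the ``moreover'' part, assume $\rho$ is complete and keep the same $\phi$. Since $\mathcal{P}_{n+1}$ refines $\mathcal{P}_{n}$, the chain $P_{0}(x)\supseteq P_{1}(x)\supseteq\cdots$ is automatically decreasing. I would show $\phi(X)$ is closed by a thread argument: if $a=(a_{n})\in\overline{\phi(X)}$, then by the product topology, for every $N$ there is a point of $X$ whose first $N+1$ labels are $a_{0},\dots,a_{N}$; this forces the labelled members to satisfy $P_{a_{0}}\supseteq\cdots\supseteq P_{a_{N}}$ with each nonempty, for every $N$. Hence $(P_{a_{n}})_{n}$ is a decreasing chain of nonempty closed sets whose $\rho$-diameters tend to $0$, and completeness of $\rho$ yields a unique $x\in\bigcap_{n}P_{a_{n}}$ with $\phi(x)=a$. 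Therefore $\phi(X)$ is closed in $\yoyizz$.

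The main obstacle is the first input, namely the passage from strong $0$-dimensionality to a refining sequence of clopen partitions with mesh tending to $0$ (and with at most $\yocontinuum$ members each). Once this is in hand, both the embedding and, via completeness, the closedness of the image are routine; continuity, injectivity, openness onto the image, and the cardinality bound $\card(\mathcal{P}_{n})\le\yocontinuum$ are all direct verifications.
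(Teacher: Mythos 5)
The paper does not actually prove Theorem \ref{thm:bc}: it is imported from the literature, the embedding from \cite[Chapter 7, Theorem 3.1]{MR0394604} and the closed-image refinement from \cite[Theorem 2]{MR152457}. Your proposal supplies a self-contained proof, and its skeleton --- a sequence of mutually refining clopen partitions of $\rho$-mesh tending to $0$ and of cardinality at most $\yocontinuum$, the point $x$ encoded by its label sequence in $\yoyizz$, and a Cantor-intersection thread argument for closedness when $\rho$ is complete --- is exactly the standard argument underlying those citations. The verifications of continuity, injectivity, openness onto the image, the cardinality bound $\card(\mathcal{P}_n)\le\card(X)\le\yocontinuum$ (pick one point from each nonempty member), and the closedness of $\phi(X)$ are all correct as you state them.

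The one step that needs repair is your derivation of the key input, namely that every open cover $\mathcal{U}$ of a strongly $0$-dimensional metrizable space refines to a partition into clopen sets. As written, ``replacing the members of a $\sigma$-discrete open refinement by clopen sets'' does not quite parse: shrinking each member $W\in\mathcal{W}_k$ to a single clopen subset can destroy the covering property, while writing each $W$ as a union of basic clopen sets destroys the discreteness you need, so that $G_k$ becomes merely open rather than clopen, and then $H_k=G_k\setminus(G_0\cup\dots\cup G_{k-1})$ need not be clopen either. Two standard fixes: (i) take a locally finite open refinement $\{V_\alpha\}$ of $\mathcal{U}$ with a closed shrinking $\{F_\alpha\}$, use strong $0$-dimensionality (ultranormality) to interpolate clopen sets $F_\alpha\subset C_\alpha\subset V_\alpha$, then well-order the index set and put $D_\alpha=C_\alpha\setminus\bigcup_{\beta<\alpha}C_\beta$, which is clopen because a locally finite union of closed sets is closed; or (ii) invoke the theorem that a strongly $0$-dimensional metrizable space has a $\sigma$-discrete base of \emph{clopen} sets and run your $G_k$, $H_k$ disjointification on the subfamily of those basic clopen sets that refine $\mathcal{U}$. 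With either repair, the rest of your argument goes through verbatim and gives a proof where the paper only gives references.
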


\begin{lem}\label{lem:smallFk}
Fix $k\in \zz_{\ge 0}$. 
Let $X$ be 
a strongly $0$-dimensional 
metrizable space with 
$\card(X)\le \yocontinuum$. 
Let $\alpha\in \yocontinuum$. 
Then there exists a strongly rigid metric 
$e\in \met(X; \yoccset_{k, \alpha})$ 
such that  $\yodiam_{e}(X)\le 2^{-k}$. 
Moreover, if $X$ is completely metrizable, 
we can choose $e$ as a complete metric. 
\end{lem}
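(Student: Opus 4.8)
The plan is to pull back the strongly rigid metric on the Baire space $\yoyizz$ constructed in Theorem \ref{thm:5555} along a topological embedding of $X$ supplied by Theorem \ref{thm:bc}. All the substantive work has already been done: the present lemma is essentially a transport-of-structure argument.

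First I would invoke Theorem \ref{thm:bc} to obtain a topological embedding $\iota\colon X\to \yoyizz$; in the case where $X$ is completely metrizable, the same theorem guarantees that the image $\iota(X)$ is closed in $\yoyizz$. Next, recall that in Definition \ref{df:fandg} we have already fixed, for the given $\alpha\in\yocontinuum\subset\yosc$, a $\yokset(\alpha)$-gauge system $R(\alpha)=\{r_{i,\alpha}\}_{i\in\zz_{\ge 0}}$ on $\yoyi$ together with the associated function $\yoinducett{R(\alpha)}{k}$ on $\yoyizz$. Since $\yokset(\alpha)$ is ubiquitously dense in $[0,\infty)$ by Lemma \ref{lem:decodeco}, Theorem \ref{thm:5555} applies and tells us that $\yoinducett{R(\alpha)}{k}$ is a complete, strongly rigid metric lying in $\met(\yoyizz)$ whose diameter satisfies $\yodiam_{\yoinducett{R(\alpha)}{k}}(\yoyizz)\le 2^{-k}$; moreover, by Definition \ref{df:fandg} the image of this metric is exactly $\yoccset_{k,\alpha}$.

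Then I would define $e\colon X\times X\to[0,\infty)$ by
\[
e(x,y)=\yoinducett{R(\alpha)}{k}(\iota(x),\iota(y)).
\]
Because $\iota$ is injective, $e$ is a metric on $X$, and all of its values lie in $\yoccset_{k,\alpha}$ (with $e(x,x)=0\in\yoccset_{k,\alpha}$); because $\iota$ is a homeomorphism onto its image, $e$ generates the topology of $X$, so $e\in\met(X;\yoccset_{k,\alpha})$. The diameter bound is immediate from $\yodiam_{e}(X)=\yodiam_{\yoinducett{R(\alpha)}{k}}(\iota(X))\le\yodiam_{\yoinducett{R(\alpha)}{k}}(\yoyizz)\le 2^{-k}$. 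Strong rigidity transfers directly: if $e(x,y)=e(u,v)\neq 0$, then strong rigidity of $\yoinducett{R(\alpha)}{k}$ gives $\{\iota(x),\iota(y)\}=\{\iota(u),\iota(v)\}$, and injectivity of $\iota$ yields $\{x,y\}=\{u,v\}$.

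Finally, for the completeness assertion, suppose $X$ is completely metrizable. Then $\iota(X)$ is closed in the complete metric space $(\yoyizz,\yoinducett{R(\alpha)}{k})$ by Theorem \ref{thm:bc}, so the restriction of $\yoinducett{R(\alpha)}{k}$ to $\iota(X)$ is complete, and hence so is $e$. I do not anticipate any genuine obstacle: the only step requiring care is matching the completeness of $e$ to the closedness of $\iota(X)$, which is exactly the content of the second assertion of Theorem \ref{thm:bc}.
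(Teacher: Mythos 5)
Your proposal is correct and follows essentially the same route as the paper: embed $X$ into $\yoyizz$ via Theorem \ref{thm:bc} and pull back the metric $\yoinducett{R(\alpha)}{k}$ from Theorem \ref{thm:5555}, using closedness of the image to transfer completeness. You simply spell out the verification steps that the paper leaves implicit.
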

\begin{proof}
Due to Theorem \ref{thm:bc}, 
we can take a topological embedding 
$h\colon X\to \yoyizz$. 
Put $e(x, y)=\yoinducett{R(\alpha)}{k}(h(x), h(y))$. 
Then, 
 by 
Theorem \ref{thm:5555}, 
the metric $e$ satisfies the desired properties. 
If $X$ is completely metrizable, 
we can choose $h$ as a closed map. 
Since $\yoinducett{R}{k}$ is complete, 
in this case, 
so is the metric $e$.  
\end{proof}

The following was first stated in 
\cite{MR454938}. 
\begin{cor}\label{cor:cardsr}
Let $X$ be a strongly $0$-dimensional 
metrizable space. 
Then the inequality $\card(X)\le \yocontinuum$ holds 
if and only if 
there exists a strongly rigid metric 
$d\in \met(X)$.
\end{cor}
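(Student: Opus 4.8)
The plan is to prove the two implications separately, as the result is a biconditional relating the cardinality bound $\card(X)\le \yocontinuum$ to the existence of a strongly rigid metric in $\met(X)$.

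For the implication that the bound $\card(X)\le \yocontinuum$ guarantees a strongly rigid metric, I would simply invoke Lemma \ref{lem:smallFk}. Fixing, say, $k=0$ and any $\alpha\in \yocontinuum$, that lemma produces a strongly rigid metric $e\in \met(X; \yoccset_{0, \alpha})$, which in particular belongs to $\met(X)$. Thus this direction is immediate from the construction carried out in the previous sections and requires no additional argument.

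For the converse—that the existence of a strongly rigid metric $d\in \met(X)$ forces $\card(X)\le \yocontinuum$—I would argue via the injectivity of the distance function on unordered pairs. The key observation is that strong rigidity says precisely that the assignment $\{x, y\}\mapsto d(x, y)$ defines an injection $\Phi\colon [X]^{2}\to (0, \infty)$: it is well defined because $d(x, y)>0$ whenever $x\neq y$, and it is injective because $d(x, y)=d(u, v)$ together with $d(x, y)\neq 0$ forces $\{x, y\}=\{u, v\}$ by the definition of strong rigidity. Consequently $\card([X]^{2})\le \card((0, \infty))=\yocontinuum$.

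It then remains to convert this into a bound on $\card(X)$ itself. If $X$ is finite, then trivially $\card(X)<\aleph_{0}\le \yocontinuum$. If $X$ is infinite, then, as noted after the definition of $[X]^{2}$, we have $\card(X)=\card([X]^{2})$, so the bound on $\card([X]^{2})$ yields $\card(X)\le \yocontinuum$ directly. In either case $\card(X)\le \yocontinuum$, completing the converse. Neither direction presents a genuine obstacle once the earlier machinery is available: the first is a direct citation of Lemma \ref{lem:smallFk}, and the only point requiring mild care in the converse is separating the finite and infinite cases in order to legitimately apply the identity $\card(X)=\card([X]^{2})$, which holds only for infinite $X$.
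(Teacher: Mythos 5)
Your proof is correct and matches what the paper intends: the corollary is stated immediately after Lemma \ref{lem:smallFk} precisely because that lemma supplies the forward direction, and the converse is the standard counting argument that strong rigidity makes $\{x,y\}\mapsto d(x,y)$ an injection of $[X]^{2}$ into $(0,\infty)$, whence $\card(X)=\card([X]^{2})\le\yocontinuum$ for infinite $X$ (the finite case being trivial). Nothing is missing.
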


\section{\bf Proofs of the main results}\label{sec:proof}

The following proposition can be found in 
 \cite[Proposition 2.1]{Ishiki2022comeager}
 (See also \cite[Proposition 3.1]{ishiki2021dense}). 
\begin{prop}\label{prop:amal}
Let $(X, d)$ be a metric space. 
Let $I$ be a set and 
 $\{B_i\}_{i\in I}$  a covering  of $X$  
consisting of  mutually disjoint  clopen  subsets. 
Let $P=\{p_i\}_{i\in I}$ be points with $p_i\in B_i$
and 
$\{e_i\}_{i\in I}$ 
a set of metrics such that 
$e_i\in \met(B_i)$. 
Let $h$ be a metric on $P$
generating the discrete topology on $P$.  
We define a function $D:X^2\to [0, \infty)$ by 
\[
D(x, y)=
\begin{cases}
e_i(x, y) & \text{ if $x, y\in B_i$;}\\
e_i(x, p_i)+h(p_i, p_j)+e_j(p_j, y)  & \text{if $x\in B_i$ and $y\in B_j$.}
\end{cases}
\]
Then 
$D\in \met(X)$ 
and 
$D|_{B_i^2}=e_i$ 
for all $i\in I$. 
Moreover, 
if for every $i\in I$ 
we have 
$\yodiam_{d}(B_i)\le \epsilon$
 and 
 $\yodiam_{e_i}(B_i)\le \epsilon$, 
 then 
$\metdis_{X}(D, d)\le 4\epsilon+\metdis_{P}(d|_{P^2}, h)$. 
\end{prop}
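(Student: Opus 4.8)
The plan is to verify, in order, that $D$ is a metric, that it restricts to $e_i$ on each block, that it induces the original topology on $X$, and finally the distortion bound; each part reduces to an elementary case analysis according to which of the $B_i$ the relevant points lie in. Symmetry of $D$ is inherited from the symmetry of the $e_i$ and of $h$. For positivity, if $x\neq y$ lie in a common block $B_i$ then $D(x,y)=e_i(x,y)>0$, while if $x\in B_i$ and $y\in B_j$ with $i\neq j$ then $D(x,y)\ge h(p_i,p_j)>0$. The identity $D|_{B_i^2}=e_i$ is immediate from the first clause of the definition.

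Next I would establish the triangle inequality $D(x,z)\le D(x,y)+D(y,z)$ by distinguishing cases according to the blocks of $x,y,z$. When all three lie in one block it is exactly the triangle inequality for $e_i$. When precisely two of them share a block, substituting the definition of $D$ and cancelling the common summands reduces the required inequality to a single triangle inequality for the relevant $e_i$ together with the nonnegativity of the remaining terms. When $x,y,z$ lie in three distinct blocks $B_i,B_j,B_l$, cancelling the common summands $e_i(x,p_i)$ and $e_l(p_l,z)$ leaves precisely $h(p_i,p_j)+h(p_j,p_l)+2e_j(p_j,y)\ge h(p_i,p_l)$, which follows from the triangle inequality for $h$ and the nonnegativity of $e_j(p_j,y)$. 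Hence $D$ is a genuine metric.

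To see that $D$ generates the original topology, I would combine two observations. Since the $\{B_i\}$ form a clopen partition, a subset of $X$ is open precisely when its trace on each $B_i$ is open in $B_i$; thus the topology of $X$ is the coproduct of the subspace topologies. On the other hand, because $h$ generates the discrete topology on $P$, for each $i$ there is $\delta_i>0$ with $h(p_i,p_j)\ge\delta_i$ for all $j\neq i$, whence $D(x,y)\ge\delta_i$ whenever $x\in B_i$ and $y\notin B_i$; this shows that each $B_i$ is $D$-clopen. Since $D$ restricts to $e_i$ on $B_i$ and $e_i$ generates the topology of $B_i$, the $D$-topology agrees with the original topology on every block, and the coproduct characterisation then forces the two topologies on $X$ to coincide, so $D\in\met(X)$.

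Finally, for the distortion estimate I would set $\delta=\metdis_P(d|_{P^2},h)$ and treat two cases. If $x,y\in B_i$, then both $D(x,y)=e_i(x,y)$ and $d(x,y)$ lie in $[0,\epsilon]$ by the hypotheses $\yodiam_{e_i}(B_i)\le\epsilon$ and $\yodiam_d(B_i)\le\epsilon$, so $|D(x,y)-d(x,y)|\le\epsilon$. If $x\in B_i$ and $y\in B_j$ with $i\neq j$, I would bound $|d(x,y)-d(p_i,p_j)|\le d(x,p_i)+d(y,p_j)\le 2\epsilon$ using the diameter hypotheses, combine this with $|d(p_i,p_j)-h(p_i,p_j)|\le\delta$, and add the two error terms $e_i(x,p_i),e_j(p_j,y)\le\epsilon$ produced by the definition of $D$; the triangle inequality for absolute values then yields $|D(x,y)-d(x,y)|\le 4\epsilon+\delta$. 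Taking the supremum over all pairs gives the claimed bound. I expect the triangle inequality for $D$ to be the most laborious step, as it is the one genuinely consuming all the hypotheses—the triangle inequalities of the $e_i$ and of $h$—although no individual case is difficult once the definition has been substituted.
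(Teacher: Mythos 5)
Your proof is correct and complete: the case analysis for the triangle inequality, the observation that each $B_i$ is $D$-clopen (using a per-point $\delta_i>0$ from the isolation of $p_i$ in $(P,h)$, which is all the discrete-topology hypothesis provides) together with $D|_{B_i^2}=e_i$ to identify the topologies, and the decomposition $|D(x,y)-d(x,y)|\le e_i(x,p_i)+e_j(p_j,y)+|h(p_i,p_j)-d(p_i,p_j)|+|d(p_i,p_j)-d(x,y)|\le 2\epsilon+\metdis_{P}(d|_{P^2},h)+2\epsilon$ all check out. The paper itself gives no proof of this proposition, only a citation to an earlier paper of the author, and your direct verification is exactly the standard argument that citation points to.
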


\begin{prop}\label{prop:amalcomp}
Under the same assumptions as  in 
Proposition \ref{prop:amal}, 
if $h$ is uniformly discrete and 
each $e_{i}$ is complete, 
then 
the metric $D$ is complete. 
\end{prop}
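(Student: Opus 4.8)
The plan is to take an arbitrary Cauchy sequence in $(X, D)$ and exploit the uniform discreteness of $h$ to show that the sequence is eventually trapped inside a single block $B_{i}$, where the completeness of $e_{i}$ then produces a limit. The whole argument hinges on a single separation estimate between distinct blocks.

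First I would record that estimate. Since $h$ is uniformly discrete, there is $c\in(0,\infty)$ with $c<h(p_{i},p_{j})$ for all distinct $i,j\in I$. By the definition of $D$ in Proposition \ref{prop:amal}, whenever $x\in B_{i}$ and $y\in B_{j}$ with $i\neq j$, we have $D(x,y)=e_{i}(x,p_{i})+h(p_{i},p_{j})+e_{j}(p_{j},y)\ge h(p_{i},p_{j})>c$, because $e_{i}(x,p_{i}),e_{j}(p_{j},y)\ge 0$. Hence any two points lying in distinct blocks are at $D$-distance strictly greater than $c$. Now let $\{x_{n}\}_{n\in\zz_{\ge 0}}$ be a Cauchy sequence in $(X,D)$ and choose $N$ so that $D(x_{n},x_{m})\le c$ for all $n,m\ge N$. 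The separation estimate forces $x_{n}$ and $x_{m}$ to lie in the same block for all such $n,m$, so there is a single index $i\in I$ with $x_{n}\in B_{i}$ for every $n\ge N$. Since $D|_{B_{i}^{2}}=e_{i}$, the tail $\{x_{n}\}_{n\ge N}$ is Cauchy in $(B_{i},e_{i})$, which converges to some $x^{\ast}\in B_{i}$ by completeness of $e_{i}$. As $D(x_{n},x^{\ast})=e_{i}(x_{n},x^{\ast})$ for $n\ge N$, the tail converges to $x^{\ast}$ in $(X,D)$, and therefore so does the whole sequence, proving that $D$ is complete.

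The conceptual heart of the proof is the separation estimate of the second paragraph: it is precisely the uniform discreteness of $h$ that prevents a Cauchy sequence from drifting across infinitely many blocks, which is the only phenomenon that could have obstructed completeness. Once confinement to one block is secured, the remainder is a direct appeal to the completeness of the local metrics $e_{i}$ together with the fact, established in Proposition \ref{prop:amal}, that $D$ restricts to $e_{i}$ on each $B_{i}$; I expect no further obstacle.
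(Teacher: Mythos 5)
Your proof is correct and follows essentially the same route as the paper's: the uniform-discreteness constant $c$ gives the separation estimate $c<D(x,y)$ for points in distinct blocks, a Cauchy tail is therefore confined to a single $B_{i}$, and completeness of $e_{i}$ together with $D|_{B_{i}^{2}}=e_{i}$ finishes the argument. No differences worth noting.
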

\begin{proof}
Take $c\in (0, \infty)$ such that 
$c<h(p_{i}, p_{j})$ for all distinct $i, j\in I$. 
By the definition of $D$, 
if $x\in B_{i}$ and $y\in B_{j}$ and $i\neq j$, 
we have $c<D(x, y)$. 
Take a Cauchy sequence 
$\{x_{n}\}_{n\in \zz_{\ge 0}}$ 
in 
$(X, D)$ and 
take a sufficient large number $N\in \zz_{\ge 0}$ such that 
for all $n, m>N$, we have $D(x_{n}, x_{m})<c$. 
Then we observe that 
there exists $i\in I$ satisfying that 
$\{\, x_{n}\mid N<n\, \}\subset B_{i}$. 
Since $D|_{B_{i}^{2}}=e_{i}$ and $e_{i}$ is complete, 
the sequence $\{x_{i}\}_{i\in \zz_{\ge 0}}$ has a limit point. 
Therefore we conclude that 
$(X, D)$ is complete. 
\end{proof}

We now prove 
Theorem \ref{thm:main1}. 
\begin{proof}[Proof of Theorem \ref{thm:main1}]
Let $X$ be a strongly $0$-dimensional 
metrizable space
with $\card(X)\le \yocontinuum$. 
Let $d\in \met(X)$
and  $\epsilon \in (0, \infty)$. 

Put $\eta=\epsilon/5$. 
Take $k\in \zz_{\ge 0}$ such that 
$2^{-k}\le \eta$. 
Since $X$ is paracompact and strongly 
$0$-dimensional, 
we can 
take a mutually disjoint open cover 
$\{O_{\alpha}\}_{\alpha<\tau}$ of $X$ with 
$\yodiam_{d}(O_{\alpha})\le \epsilon$, 
where $\tau\le \yocontinuum$
(see \cite[Proposition 1.2 and Corollary 1.4]{ellis1970extending}). 

For each $\alpha< \tau$, 
we take $p_{\alpha}\in O_{\alpha}$. 
Put $P=\{\, p_{\alpha}\mid \alpha\in \tau\, \}$. 
Then $P$ is a discrete space and 
$d|_{P^{2}}$ 
is a discrete metric on $P$. 
 Lemma \ref{lem:approxEk}
 guarantees the existence of 
 a strongly rigid 
uniformly discrete metric 
$h\in \met(P; \yoxset_{k})$ with 
$\metdis_{P}(d_{P^{2}}, h)\le \eta$. 
Applying 
Lemma \ref{lem:smallFk} to 
$O_{i}$, 
we obtain 
a strongly rigid metric 
$e_{\alpha}\in \met(O_{\alpha}; \yoccset_{k, \alpha})$ with 
$\yodiam_{e_{\alpha}}(O_{\alpha})\le 2^{-k}\le \eta$. 
We define a metric $e$ by 
\[
e(x, y)=
\begin{cases}
e_{\alpha}(x, y) & \text{ if $x, y\in B_{\alpha}$;}\\
e_{\alpha}(x, p_{\alpha})+
h(p_{\alpha}, p_{\beta})+e_{\beta}(p_{\beta}, y)  & \text{if $x\in B_{\alpha}$ and $y\in B_{\beta}$.}
\end{cases}
\]
Applying Proposition \ref{prop:amal} to 
$\{O_{\alpha}\}_{\alpha<\tau}$, $P$, 
$\{e_{\alpha}\}_{\alpha< \tau}$, 
$h$, and $\eta$, 
we obtain $e\in \met(X)$ and 
$\metdis(d, e)\le 5\eta=\epsilon$. 

We shall prove $e\in \yoipt(X)$. 
From  the definition of $e$, and 
Lemma \ref{lem:xyzabc}, 
it follows  that 
for all $x, y, u, v\in X$ with 
$x\neq y$, $u\neq v$, 
and $\{x, y\}\neq \{u, v\}$, 
the numbers 
$e(x, y)$ and $e(u, v)$ are
linearly independent over $\qq$. 
Hence we conclude that $e\in \yoipt(X)$. 

To prove the latter part, 
we 
assume that $X$ is completely metrizable. 
Since each $O_{\alpha}$ is clopen in 
$X$, 
the set  $O_{\alpha}$ is completely metrizable. 
By  the latter part of Lemma \ref{lem:smallFk}, 
we can choose each $e_{\alpha}$ as a 
complete metric. 
Since $h$ is uniformly discrete, 
Proposition \ref{prop:amalcomp}
implies that $e$ is a complete metric. 
This finishes  the proof of Theorem \ref{thm:main1}. 
\end{proof}

The proof of the following proposition is 
analogous with 
that  of \cite[Theorem 2]{MR2831899}. 
\begin{prop}\label{prop:srgdelta}
If 
 $X$ is  a strongly $0$-dimensional 
$\sigma$-compact
metrizable space,
then the set 
$\yosr(X)$ is $G_{\delta}$ 
in $\met(X)$. 
\end{prop}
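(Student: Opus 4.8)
The plan is to prove the equivalent statement that the complement $\met(X)\setminus\yosr(X)$ is $F_{\sigma}$ in $(\met(X),\metdis_{X})$. Since $X$ is $\sigma$-compact, I would first fix an increasing sequence $\{K_{n}\}_{n\in\zz_{\ge 0}}$ of compact subsets with $X=\bigcup_{n\in\zz_{\ge 0}}K_{n}$ (passing to partial unions makes the sequence increasing without losing compactness).

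The core idea is to encode a failure of strong rigidity by a condition that survives uniform limits. The obstruction is that the raw condition for a violation---the existence of $x,y,u,v$ with $\{x,y\}\neq\{u,v\}$ and $0\neq d(x,y)=d(u,v)$---is \emph{not} closed: along a convergent sequence of metrics the witnessing points may collapse (so $x=y$ in the limit and the distance becomes $0$), or the two pairs may merge (so $\{x,y\}=\{u,v\}$ in the limit). To prevent this I would build in a separation scale. For $n\in\zz_{\ge 0}$ and a positive rational $\delta$, let $B_{n,\delta}$ be the set of $d\in\met(X)$ for which there exist $x,y,u,v\in K_{n}$ with
\[
d(x,y)\ge\delta,\quad d(u,v)\ge\delta,\quad \max(d(x,u),d(y,v))\ge\delta,\quad \max(d(x,v),d(y,u))\ge\delta,
\]
and $d(x,y)=d(u,v)$.

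Next I would verify the two facts that make this decomposition work. First, each $B_{n,\delta}$ is closed: given $d_{m}\to d$ with witnesses $x_{m},y_{m},u_{m},v_{m}\in K_{n}$, compactness of $K_{n}$ lets me extract a subsequence along which the four points converge in $K_{n}$, and $\metdis_{X}(d_{m},d)\to 0$ gives uniform convergence of distances, so every displayed non-strict inequality and the equality $d(x,y)=d(u,v)$ pass to the limit. The bounds $d(x,y)\ge\delta$ and $d(u,v)\ge\delta$ force $x\neq y$ and $u\neq v$, while the two $\max$-conditions force $\{x,y\}\neq\{u,v\}$; indeed, for pairs of distinct elements, $\{x,y\}=\{u,v\}$ is equivalent to $(x=u\wedge y=v)\vee(x=v\wedge y=u)$, and each disjunct is excluded by one of the positive maxima. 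Hence $d\in B_{n,\delta}$. Second, $\met(X)\setminus\yosr(X)=\bigcup_{n\in\zz_{\ge 0}}\bigcup_{j\in\zz_{\ge 1}}B_{n,1/j}$: if $d$ is not strongly rigid, a violating quadruple lies in some $K_{n}$, and since $x\neq y$, $u\neq v$, and $\{x,y\}\neq\{u,v\}$, all four separations are strictly positive and thus exceed some $1/j$; conversely every $d\in B_{n,1/j}$ visibly fails strong rigidity.

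Putting these together, $\met(X)\setminus\yosr(X)$ is a countable union of closed sets, hence $F_{\sigma}$, so $\yosr(X)$ is $G_{\delta}$. The only genuinely delicate point---and the one I would be most careful with---is the limit-stability in the closedness argument: the $\delta$-separation is precisely what keeps the witnesses from degenerating under the limit, and the equivalence of the combinatorial condition $\{x,y\}\neq\{u,v\}$ with the two $\max$-inequalities is what lets the whole thing be phrased using closed (non-strict) conditions only.
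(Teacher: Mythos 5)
Your proposal is correct and follows essentially the same route as the paper: decompose the complement of $\yosr(X)$ into countably many closed sets indexed by the compact pieces $K_{n}$ and a quantitative separation threshold, prove closedness by extracting convergent subsequences of the witnessing quadruples in $K_{n}$ and passing non-strict inequalities to the limit under uniform convergence of the metrics. The only cosmetic difference is that you use $\max(d(x,u),d(y,v))\ge\delta$ where the paper uses the sum $d(x,u)+d(y,v)\ge 2^{-m}$; both serve identically to force $\{x,y\}\neq\{u,v\}$ in the limit.
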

\begin{proof}
Take a sequence
$\{K_{i}\}_{i\in \zz_{\ge 0}}$
 of compact subsets of $X$ such that 
$X=\bigcup_{i\in \zz_{\ge 0}}K_{i}$ and 
$K_{i}\subset K_{i+1}$ for all $i\in \zz_{\ge 0}$. 
For $n, m \in \zz_{\ge 0}$, 
we denote by  $L_{n, m}$
the set of all 
 $d\in \met(X)$ such that there exists $x, y, u, v\in K_{n}$  with 
 \begin{enumerate}
 \item 
 $d(x, y)=d(u, v)\ge 2^{-m}$;
 \item $d(x, u)+d(y, v)\ge 2^{-m}$;
 \item $d(x, v)+d(u, y)\ge 2^{-m}$. 
 \end{enumerate}
 We now show that $L_{n, m}$ is a closed subset of 
 $\met(X)$. 
 Take  a sequence $\{e_{i}\}_{i\in \zz_{\ge 0}}$ in 
 $L_{n, m}$ and
 take $d\in \met(X)$
 such that  
 $e_{i}\to  d$
 as $i\to \infty$. 
 We shall show $d\in L_{n, m}$. 
 By extracting a subsequence using 
 the compactness of $K_{n}$ if necessary, 
 we may assume that there exist 
  sequences 
  $\{x_{i}\}_{i\in \zz_{\ge 0}}$,
   $\{y_{i}\}_{i\in \zz_{\ge 0}}$, 
  $\{u_{i}\}_{i\in \zz_{\ge 0}}$, 
  and 
  $\{v_{i}\}_{i\in \zz_{\ge 0}}$ in 
  $K_{n}$, 
  and 
    points $x, y, z, w\in K_{n}$, such that 
\begin{enumerate}
\item  $e_{i}(x_{i}, y_{i})=e_{i}(u_{i}, v_{i})\ge 2^{-m}$
 for all $i\in \zz_{\ge 0}$; 
\item  $e_{i}(x_{i}, u_{i})+e_{i}(y_{i}, v_{i})\ge 2^{-m}$ for all $i\in \zz_{\ge 0}$; 
 \item $e_{i}(x_{i}, v_{i})+e_{i}(y_{i}, u_{i})\ge 2^{-m}$ for all $i\in \zz_{\ge 0}$; 
 \item $x_{i}\to x$, $y_{i}\to y$, 
  $u_{i}\to u$, and 
  $v_{i}\to v$ as $i\to \infty$. 
\end{enumerate}

Since $d$ and $e_{i}$ generate the same topology of 
$X$ and since $e_{i}\to d$ as $i\to \infty$, 
 if $p\in \{x, y, u, v\}$ and $q\in \{x, y, u, v\}$, 
then we have 
$e_{i}(p_{i}, q_{i})\to d(p, q)$ as 
$i\to \infty$. 
Thus, we obtain 
$d(x, y)=d(u, v)\ge 2^{-m}$ and 
$d(x, u)+d(y, v)\ge 2^{-m}$
and $d(x, v)+d(y, u)\ge 2^{-m}$. 
Therefore $d\in L_{n, m}$, 
and hence $L_{n, m}$ is closed. 
 Put $G_{n, m}=\met(X)\setminus L_{n, m}$. 
 Then 
 each $G_{n, m}$ is open in $\met(X)$,  and 
 we obtain 
 \[
 \yosr(X)=\bigcap_{n, m\in \zz_{\ge 0}}G_{n, m}.
 \]
 This proves  the proposition. 
\end{proof}

\begin{proof}[Proof of Theorem \ref{thm:main15}]
Let $X$ be a strongly $0$-dimensional 
metrizable space
with $\card(X)\le \yocontinuum$. 

Due to  Proposition \ref{prop:srgdelta}, 
we only need to prove that 
$\yosr(X)$ is dense in $\met(X)$. 
We now show $\yoipt(X)\subset \yosr(X)$. 
Take $d\in \yoipt(X)$. 
Take $x, y, u, v\in X$ with $x\neq y$
and $u\neq v$, and $\{x, y\}\neq \{u, v\}$. 
Then $d(x, y)$ and $d(u, v)$ are linearly independent 
over $\qq$. In particular, 
we obtain 
$d(x, y)\neq d(u, v)$, and hence 
$d\in \yosr(X)$. 
Thus, we have $\yoipt(X)\subset \yosr(X)$. 
According to  Theorem \ref{thm:main1}, 
we observe that $\yosr(X)$ is dense in $\met(X)$. 
This completes the proof of 
Theorem \ref{thm:main15}. 
\end{proof}

\begin{proof}[Proof of Theorem \ref{thm:main2}]
Let $X$ be a strongly $0$-dimensional 
metrizable space. 
Assume that  $X$ is $\sigma$-compact and satisfies 
$3\le \card(X)\le \yocontinuum$. 

We only need to prove $\yosr(X)\subset \yorrr(X)$.
Take  $d\in \yosr(X)$, 
and let $f\colon (X, d)\to (X, d)$ be a bijective isometry. 
Take arbitrary $x\in X$, and 
take two  points $y, z$ in $X$ with 
$\card(\{x, y, z\})=3$. 
Since $d(x, y)=d(f(x), f(y))$ and $d(x, z)=d(f(x), f(z))$, 
we have $\{x, y\}=\{f(x), f(y)\}$ and $\{x, z\}=\{f(x), f(z)\}$.
Then we obtain 
$f(x)\in \{x, y\}\cap \{x, z\}=\{x\}$, 
and hence $f(x)=x$. 
Since $x$ is arbitrary, we conclude that $f$ is the identity map, 
which implies that $\yosr(X)\subset \yorrr(X)$. 
This finishes the proof of Theorem \ref{thm:main2}. 
\end{proof}

Before proving Theorem \ref{thm:plus1}, 
we introduce some notions. 
Let $X$ and $Y$ be topological spaces. 
A continuous  map $f\colon X\to Y$ is 
said to be \empty{proper} if for every compact subset of $Y$, the set $f^{-1}(K)$ is compact. 
For a metric space $(X, d)$, the metric $d$ is 
\emph{proper} if all closed balls in $(X, d)$ are compact. 
Note that $d$ is proper if and only if 
a map $x\mapsto d(x, p)$ is a proper map for all $p\in X$. We denote by $B(x, r; d)$ the closed ball 
of $(X, d)$ centered at $x$ with radius $r$. 
\begin{proof}[Proof of Theorem \ref{thm:plus1}]
Let $X$ be a strongly $0$-dimensional 
second-countable
 locally compact Hausdorff space. 
Observe  that $X$ is metrizable. 

Let $\yoppset$ be the set of all 
proper metrics in $\met(X)$. 
Since $X$ is second-countable and  locally compact, 
we have  $\yoppset\neq \emptyset$ 
(see for example \cite{ishiki2022prop}). 
We now show that $\yoppset$ is open in 
$\met(X)$. 
Take $d\in \yoppset$ and take 
$e\in \met(X)$ such that $\metdis_{X}(d, e)\le 1$. 
In this setting, we notice that 
$B(x, r; e)\subset B(x, r+1; d)$ for all $x\in X$ and $r\in (0, \infty)$. 
Since $d$ is proper, so is $e$. 
Thus, the set $\yoppset$ is open in $\met(X)$. 

Due to Theorem \ref{thm:main15}, 
the set $\yosr(X)$ is dense in $\met(X)$. 
Since $\yoppset$ is non-empty and open, 
we obtain $\yoppset \cap \yosr(X)\neq \emptyset$, 
and we can take a member from this set, say $d$. 
Recall that for each $\xi\in X$, the map $F_{\xi}\colon X\to [0, \infty)$ is defined as $F_{\xi}(x)=d(x, \xi)$. 
Fix $\xi\in X$. 
Next we verify that $F_{\xi}$ is a topological 
embedding. 
Since $d$ is strongly rigid, the map $F_{\xi}$ is 
injective. 
According to $d\in \yoppset$, the map $F_{\xi}$ is proper as a map. 
From the fact that every proper map into a  metrizable space is  a closed map (see for instance  \cite{MR254818}), 
it follows that $F_{\xi}$ is a closed map. 
Therefore we conclude that $F_{\xi}$ is 
a topological embedding. 
This finishes the proof of Theorem \ref{thm:plus1}. 
\end{proof}

\begin{ac}
The author would like to thank the referee
for helpful comments and suggestions. 
\end{ac}

%%%%%%%%%%%%%%%%%%%%%%%%%%%%%%%
%%%%%%%%%%%%%%%%%%%%%%%%%%%%%%%%%%%%%%
%%%%%%%%%%%%%%%%%%%%%%%%%%%%%%%%%%%

%\nocite{*}
\bibliographystyle{plain}
\bibliography{bibtex/disco.bib}

\begin{thebibliography}{10}

\bibitem{MR314012}
K.~A. Broughan.
\newblock A metric characterizing \v{C}ech dimension zero.
\newblock {\em Proc. Amer. Math. Soc.}, 39:437--440, 1973.

\bibitem{MR0267604}
D.~G. Ebin.
\newblock The manifold of {R}iemannian metrics.
\newblock In {\em Global {A}nalysis ({P}roc. {S}ympos. {P}ure {M}ath., {V}ol.
  {XV}, {B}erkeley, {C}alif., 1968)}, pages 11--40. Amer. Math. Soc.,
  Providence, R.I., 1970.

\bibitem{ellis1970extending}
R.~L. Ellis.
\newblock Extending continuous functions on zero-dimensional spaces.
\newblock {\em Math. Ann.}, 186(2):114--122, 1970.

\bibitem{MR1000155}
Y.~Hattori.
\newblock Congruence and dimension of nonseparable metric spaces.
\newblock {\em Proc. Amer. Math. Soc.}, 108(4):1103--1105, 1990.

\bibitem{MR4022760}
C.~Ho and S.~Zimmerman.
\newblock Partitioning the real line into an uncountable collection of
  everywhere uncountably dense sets.
\newblock {\em Amer. Math. Monthly}, 126(9):825--834, 2019.

\bibitem{mathof}
F.~G.~Dorais (https://mathoverflow.net/users/2000/fran\c{c}ois-g dorais).
\newblock explicit big linearly independent sets.
\newblock MathOverflow.
\newblock URL:https://mathoverflow.net/q/23206 (version: 2017-04-13).

\bibitem{Ishiki2020int}
Y.~Ishiki.
\newblock An interpolation of metrics and spaces of metrics.
\newblock 2020.
\newblock preprint, arXiv:2003.13277.

\bibitem{Ishiki2021ultra}
Y.~Ishiki.
\newblock An embedding, an extension, and an interpolation of ultrametrics.
\newblock {\em $p$-Adic Numbers Ultrametric Anal. Appl.}, 13(2):117--147, 2021.

\bibitem{ishiki2021dense}
Y.~Ishiki.
\newblock On dense subsets in spaces of metrics.
\newblock 2021.
\newblock preprint arXiv:2104.12450, to apper in Colloq. Math.

\bibitem{ishiki2022prop}
Y.~Ishiki.
\newblock Extending proper metrics.
\newblock 2022.
\newblock preprint arXiv:2207.12905, to appear in Topology Appl.

\bibitem{Ishiki2022comeager}
Y.~Ishiki.
\newblock On comeager sets of metrics whose ranges are disconnected.
\newblock 2022.
\newblock preprint arXiv:2207.12765, to appear in Topology Appl.

\bibitem{MR288739}
L.~Janos.
\newblock A metric characterization of zero-dimensional spaces.
\newblock {\em Proc. Amer. Math. Soc.}, 31:268--270, 1972.

\bibitem{MR474229}
L.~Janos and H.~Martin.
\newblock Metric characterizations of dimension for separable metric spaces.
\newblock {\em Proc. Amer. Math. Soc.}, 70(2):209--212, 1978.

\bibitem{MR1940513}
T.~Jech.
\newblock {\em Set theory}.
\newblock Springer Monographs in Mathematics. Springer-Verlag, Berlin, 2003.
\newblock The third millennium edition, revised and expanded.

\bibitem{MR454938}
H.~W. Martin.
\newblock Strongly rigid metrics and zero dimensionality.
\newblock {\em Proc. Amer. Math. Soc.}, 67(1):157--161, 1977.

\bibitem{MR3320922}
P.~Mounoud.
\newblock Metrics without isometries are generic.
\newblock {\em Monatsh. Math.}, 176(4):603--606, 2015.

\bibitem{MR173645}
J.~Mycielski.
\newblock Independent sets in topological algebras.
\newblock {\em Fund. Math.}, 55:139--147, 1964.

\bibitem{MR1512442}
J.~v. Neumann.
\newblock Ein {S}ystem algebraisch unabh\"{a}ngiger {Z}ahlen.
\newblock {\em Math. Ann.}, 99(1):134--141, 1928.

\bibitem{MR254818}
R.~S. Palais.
\newblock When proper maps are closed.
\newblock {\em Proc. Amer. Math. Soc.}, 24:835--836, 1970.

\bibitem{MR0394604}
A.~R. Pears.
\newblock {\em Dimension theory of general spaces}.
\newblock Cambridge University Press, Cambridge, England-New York-Melbourne,
  1975.

\bibitem{MR2831899}
J.~Rouyer.
\newblock Generic properties of compact metric spaces.
\newblock {\em Topology Appl.}, 158(16):2140--2147, 2011.

\bibitem{MR142102}
P.~Roy.
\newblock Failure of equivalence of dimension concepts for metric spaces.
\newblock {\em Bull. Amer. Math. Soc.}, 68:609--613, 1962.

\bibitem{MR227960}
P.~Roy.
\newblock Nonequality of dimensions for metric spaces.
\newblock {\em Trans. Amer. Math. Soc.}, 134:117--132, 1968.

\bibitem{MR152457}
A.~H. Stone.
\newblock Non-separable {B}orel sets.
\newblock {\em Rozprawy Mat.}, 28:41, 1962.

\end{thebibliography}

\end{document}